\documentclass[reqno,11pt]{article}
\usepackage{amsmath, amsfonts, amsthm, amscd, amssymb, commath}
\usepackage{indentfirst, bm, mathtools}
\usepackage{geometry}
\usepackage{tikz}
\usepackage{authblk, fancyhdr}
\usepackage[colorlinks=true]{hyperref}
\usepackage[justification=centering]{caption}

\usetikzlibrary{arrows, positioning, shapes, decorations.markings,decorations.pathmorphing, bending, calc}
\tikzstyle{arrowstyle}=[scale=1.5]

\setlength{\parskip}{0.5\baselineskip}

\newtheorem{theorem}{Theorem}[section]
\newtheorem{proposition}[theorem]{Proposition}
\newtheorem{lemma}[theorem]{Lemma}

\theoremstyle{definition}
\newtheorem{definition}[theorem]{Definition}

\theoremstyle{remark}
\newtheorem*{remark}{Remark}
\numberwithin{equation}{section}

\title{The Hopf-Tsuji-Sullivan Dichotomy on Visibility Manifolds Without Conjugate Points}

\author[1]{Fei Liu\thanks{liufei@math.pku.edu.cn}}
\affil[1]{\small College of Mathematics and Systems Science, Shandong University of Science and Technology, Qingdao, 266590, China}

\author[2]{Xiaokai Liu\thanks{12232868@mail.sustech.edu.cn}}
\affil[2]{\small Department of Mathematics, Southern University of Science and Technology, Shenzhen, 518055, China}

\author[3]{Fang Wang\thanks{~Corresponding Author: fangwang@cnu.edu.cn}}
\affil[3]{\small School of Mathematical Sciences, Capital Normal University, Beijing, 100048, China}

\begin{document}
\maketitle
\begin{abstract}
	In this article, we establish the Hopf-Tsuji-Sullivan dichotomy for geodesic flows on certain manifolds with no conjugate points: either the geodesic flow is conservative and ergodic, or it is completely dissipative and non-ergodic. We also show several equivalent conditions to the conservativity, such the Poincar\'e series diverges at the critical exponent, the conical limit set has full Patterson-Sullivan measure, etc.    

\textbf{Keywords}: Geodesic Flows, Hopf-Tsuji-Sullivan Dichotomy, Patterson-Sullivan Measure, Bowen-Margulis-Sullivan Measure

\textbf{2020~MSC}: 37D40, 37D25
\end{abstract}

\section{\bf Introduction}

Let $M$ be a smooth manifold equipped with a Riemannian metric $g$ and $\phi_t$ be the geodesic flow on the unit tangent bundle $T^1 M$. The dynamical behavior of the geodesic flow relies on the metric. It is observed that negative curvature suggests uniform hyperbolicity and non-positive curvature allows non-uniform hyperbolicity.

As a natural but significant generalization of manifolds with non-positive curvature, the manifolds with no conjugate points allows for some hyperbolic behavior of the geodesic flow, but also admits positive curvature which is an obstacle to the hyperbolicity, thus attracts great interests of dynamicists through the history. Lacking of the hyperbolicity and several powerful tools used on manifolds with negative or non-positive curvature, such as analysis involving the curvature, research under the new setting usually requires greater efforts.

Inspired by Eberlein~\cite{Eb1}, visibility axiom was introduced as a useful tool to overcome these difficulties, and fruitful results were achieved. In our article, we follow these classical ideas to advance our study. We assume that $M$ is a uniform visibility manifold with no conjugate points.

Let $\widetilde{M}$ be the universal covering space of $M$, which can be shown to be diffeomorphic to the open unit ball in $\mathbb{R}^{\text{dim} M}$ (Cartan-Hadamard Theorem). Let  $\widetilde{M}(\infty)$ denote the ideal boundary of  $\widetilde{M}$, which is defined to be the set of equivalence classes of positive asymptote geodesics on $\widetilde{M}$. 

Visibility axiom guarantees for any two different points $\xi\neq\eta$ on the ideal boundary, there is at least one connecting geodesic. In~\cite{Eb3}, Eberlein and O'Neil introduced the Axiom 2, saying that there are no more than one connecting geodesics. This property holds naturally on manifolds with negative sectional curvature, and we need it to define the Bowen-Margulis-Sullivan measure on manifolds with no conjugate points. Here we call the manifold is \emph{regular} if it satisfies the Axiom 2.

Assume $\Gamma$ is a non-elementary discrete subgroup of the isometry group of $\widetilde{M}$, which is isomorphic to $\pi_1(M)$. Then we know that  $M\cong\widetilde{M}/\Gamma$. The action of the $\Gamma$ on $\widetilde{M}$ also induces a group action on the ideal boundary $\widetilde{M}(\infty)$. Given a real number $s$ and a pair of points $p,q \in \widetilde{M}$, we can define the \emph{Poincar\'e series} of $\Gamma$ as
\begin{displaymath}
	P(s,p,q)=\sum_{\alpha\in\Gamma}\mathrm{e}^{-s d(p,\alpha q)}.
\end{displaymath}

For different $s$, the Poincar\'e series can be either convergent or divergent. The \emph{critical exponent} of this Poincar\'e series is defined as
\begin{displaymath}
	\delta_\Gamma=\inf\{s\geq 0\mid P(s,p,q)<\infty\}.
\end{displaymath}

It is observed that $\delta_\Gamma$ does not depend on the choice of $p$ or $q$. We call $\Gamma$ is of the \emph{divergence type} if $P(\delta_\Gamma,p,q)$ diverges, otherwise we call it is of the \emph{convergent type}.

A family of $\Gamma$-equivariant measures on the ideal boundary $\widetilde{M}(\infty)$, denoted by ${\{\mu_p\}}_{p\in \widetilde{M}}$, can be constructed based on the Poincar\'e series. It is named as the \emph{Patterson-Sullivan (PS) measure}. The construction of PS measure gives a concrete example of the conformal measure and links ergodic theory to harmonic analysis. Thus, this measure attracts interests through the history since it is constructed. In~\cite{Pa2}, Patterson first introduce the measure on $2$-dimensional hyperbolic manifolds, and later generalized to higher dimensional hyperbolic manifolds by Sullivan in~\cite{Su2}. Later, Yue in~\cite{Yue1} studied this measure on manifolds with pinched negative curvature; Knieper in~\cite{Kn4} and Link in~\cite{Li} discussed this family of measures on the manifolds with non-positive curvature, Ricks on rank one $\text{CAT}(0)$ spaces in~\cite{Ri2} and Das, Simmons and Urba\'{n}ski on Gromov hyperbolic metric spaces in~\cite{DDU}. 

Based on Patterson-Sullivan measure, a unique measure $m$ on $T^1M$ that is invariant under the geodesic flow $\phi_t$ can be constructed. This measure is widely used in dynamics and called the \emph{Bowen-Margulis-Sullivan (BMS) measure}.

Proven by Sullivan in~\cite{Su2, Su}, this measure is exactly the unique measure of maximal entropy for geodesic flows on manifolds with constant curvature, offering an explicit construction to support the studies of Margulis (\cite{Ma}) and Bowen (\cite{Bow}).
When the standard Hopf parametrization exists on $\widetilde{M}$ (e.g.~$M$ is a negatively curved manifold), $m$ is locally equivalent to the product measure $d\mu_p\otimes d\mu_p\otimes dt$ under the Hopf coordinates. For manifolds whose universal covering does not admit the Hopf parametrization, the BMS measure was first constructed by Knieper in~\cite{Kn4} for manifolds with non-positive curvature. 

It has been a long time since mathematicians noticed that the geodesic flow is either conservative or completely dissipative with respect to the BMS measure $m$ if certain hyperbolic properties are allowed on the manifolds. This is the so called Hopf-Tsuji-Sullivan (HTS) dichotomy. This phenomenon of dichotomy was first observed in the 1930s by Hopf in studying the dynamics of geodesic flows on surfaces with constant negative curvature, although it was several decades before the precise concept of the BMS measure was raised (see Hopf~\cite{Ho} as a good reference. In fact, Hopf originally considered the ergodicity of geodesic flows with respect to the Liouville measure, which is exactly the BMS measure in locally symmetric spaces). Later, Tsuji in~\cite{Ts} proved that conservativity is equivalent to ergodicity, and Sullivan further extended their results to higher dimensional manifolds with constant negative curvature in~\cite{Su2}. The study of this dichotomy remains active till now. Kaimanovich in~\cite{Ka2} exhibited the HTS dichotomy on Gromov hyperbolic metric spaces. Yue in~\cite{Yue1} showed the HTS dichotomy holds on manifolds with pinched negative curvature. Roblin in~\cite{Rb} proved it for the $\text{CAT}(-1)$ metric spaces. Link and Picaud in~\cite{Li2, LP} for the non-positively curved spaces. Recently, Burger, Landesberg, Lee and Oh in~\cite{BLLO} establish an extension of the HTS dichotomy to any Zariski dense discrete subgroup of a semisimple real algebraic group.

Furthermore, it is observed that the conservativity and ergodicity of the geodesic flow is highly related to the ergodicity of the $\Gamma$-action on $\widetilde{M}(\infty)\times\widetilde{M}(\infty)$, with respect to the BMS measure and PS measure respectively. In this paper, we prove this observation on complete, regular and uniform visibility manifolds with no conjugate points. We state our main theorem as the following:

\begin{theorem}[Main Theorem]
	Let $M$ be a complete, regular and uniform visibility manifold with no conjugate points, $\mu={\{\mu_q\}}_{q\in\widetilde{M}}$ be the Patterson-Sullivan measure on $\widetilde{M}(\infty)$. Fix any point $p\in\widetilde{M}$, and let $m$ be the BMS measure on $T^1M$ induced by $\mu_p$. Then either the geodesic flow $\phi_t$ is conservative or completely dissipative with respect to $m$ (namely, the HTS dichotomy holds). Moreover, the following statements are equivalent:\vspace{-1.5ex}
	\begin{itemize}
		\item The conical limit set $L_c(\Gamma)$ has full $\mu_p$ measure in $\widetilde{M}(\infty)$.
		\item $\phi_t$ is conservative with respect to $m$.
		\item $\phi_t$ is ergodic with respect to $m$.
		\item The $\Gamma$-action on $\widetilde{M}(\infty)\times\widetilde{M}(\infty)$ is ergodic with respect to $\mu_p\times\mu_p$.
		\item The Poincar\'e series $\sum_{\alpha\in\Gamma}\mathrm{e}^{-\delta_\Gamma d(p,\alpha p)}$ diverges.
	\end{itemize}
\end{theorem}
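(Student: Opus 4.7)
The plan is to prove the dichotomy together with the five equivalences by following the classical Hopf--Tsuji--Sullivan cycle of implications, adapted from the pinched negatively curved setting (Sullivan, Yue, Roblin) to uniform visibility manifolds without conjugate points. Label the five statements (C) full conical measure, (D) conservativity, (E) flow ergodicity, (B) boundary ergodicity, and (P) Poincar\'e divergence, and establish $(\mathrm{P}) \Rightarrow (\mathrm{C}) \Rightarrow (\mathrm{D}) \Rightarrow (\mathrm{E}) \Rightarrow (\mathrm{B}) \Rightarrow (\mathrm{P})$. The dichotomy itself is then a byproduct: if conservativity fails one invokes the abstract Hopf decomposition $T^1M = \mathcal{C} \sqcup \mathcal{W}$ into a conservative part and a wandering part, both $\phi_t$-invariant and hence $\Gamma$-invariant on the boundary factor, and shows via the shadow estimates below that $\mathcal{C}$ must be $m$-null when (P) fails.

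For $(\mathrm{P}) \Rightarrow (\mathrm{C})$ I would first establish a shadow-lemma type estimate in this setting, bounding $\mu_p$ of the shadow $O_p(B(\alpha p, r))$ from above and below by constants times $\mathrm{e}^{-\delta_\Gamma d(p,\alpha p)}$; uniform visibility and the absence of conjugate points give the needed control on Busemann cocycles and on shadow geometry, and the regular (Axiom 2) hypothesis ensures the shadows are well-defined. A Borel--Cantelli argument applied to the divergent series then forces $\mu_p$-almost every $\xi$ into infinitely many such shadows, which is precisely membership in $L_c(\Gamma)$. The reverse implication $(\mathrm{B}) \Rightarrow (\mathrm{P})$ runs by contradiction: convergence of the Poincar\'e series, combined with the shadow estimate, produces a $\mu_p \times \mu_p$-positive wandering set on $\widetilde{M}(\infty) \times \widetilde{M}(\infty)$, contradicting ergodicity of the $\Gamma$-action.

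The middle implications use Hopf coordinates. For $(\mathrm{C}) \Rightarrow (\mathrm{D})$, a unit vector $v$ whose forward endpoint $v^+$ lies in $L_c(\Gamma)$ has an orbit that returns infinitely often to some compact set in $T^1M$, so $\phi_t$-conservativity follows from the local product structure $dm = d\mu_p \otimes d\mu_p \otimes dt$ on $T^1\widetilde{M}$ (here Axiom 2 is essential so that the Hopf parametrization is a homeomorphism away from the diagonal). The equivalence $(\mathrm{E}) \Leftrightarrow (\mathrm{B})$ follows from the same product structure and the fact that the $\mathbb{R}$-action by $\phi_t$ and the $\Gamma$-action commute, so ergodicity factors through the boundary square modulo the flow. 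For $(\mathrm{D}) \Rightarrow (\mathrm{E})$ I would run a Hopf-style argument: by Birkhoff, forward and backward averages of continuous functions agree $m$-a.e.\ and depend only on $v^+$ (respectively $v^-$); visibility plus the quasi-product structure let one argue that such functions of one boundary variable coincide with functions of the other $\mu_p \times \mu_p$-a.e., hence are constant.

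The main obstacle is the shadow lemma and the Hopf argument in the absence of negative curvature. Without strictly convex horoballs or a smooth stable foliation, the two-sided comparison between $\mathrm{e}^{-\delta_\Gamma d(p,\alpha p)}$ and $\mu_p$ of a shadow requires a delicate use of uniform visibility and of asymptoticity of geodesics to control Busemann cocycles along large triangles, and the leafwise decomposition used in Hopf's argument must be set up by hand through the positive/negative asymptote equivalence relations rather than through a bona fide hyperbolic foliation. Once these geometric inputs are in place, the cycle of implications proceeds along now-standard lines.
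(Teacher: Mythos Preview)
Your overall scheme is the same classical Hopf--Tsuji--Sullivan cycle the paper uses, and your identification of the shadow lemma and the Hopf argument as the two genuinely new geometric inputs in this setting is correct. But two of your implications are underspecified in ways that matter, and in each case the paper takes a slightly different route precisely to avoid the difficulty.

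First, your step $(\mathrm{P})\Rightarrow(\mathrm{C})$ via ``a Borel--Cantelli argument applied to the divergent series'' is not the easy direction: the second Borel--Cantelli lemma requires quasi-independence of the shadow events $\{\xi\in O_p(B(\alpha p,r))\}$, and you give no mechanism for that on the boundary. The paper avoids this by proving $(\mathrm{P})\Rightarrow(\mathrm{D})$ directly on the BMS side (its Theorem~6.4): it works with the events $\widetilde T_R\cap\phi_{-n}(\Gamma\widetilde T_R)$ inside $T^1\widetilde M$, where the product structure of $\widetilde m$ and $\Gamma$-invariance give the inequality $P(\widetilde E_{n+k}\mid\widetilde E_n)\le P(\widetilde E_k)$ needed for Borel--Cantelli. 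The passage from $(\mathrm{D})$ back to $(\mathrm{C})$ is then the elementary Proposition~6.2. If you insist on working purely on $\widetilde M(\infty)$ you will need an analogous quasi-independence estimate for shadows, which you have not supplied.

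Second, in your Hopf argument for $(\mathrm{D})\Rightarrow(\mathrm{E})$ you assert that forward ergodic averages ``depend only on $v^+$''. In negative curvature this follows because positively asymptotic geodesics converge exponentially; here, uniform visibility only gives that they stay at bounded distance (Proposition~2.7(1)), which is not enough to transfer Birkhoff averages. The paper handles this with its Proposition~6.5: for $\xi\in L_c(\Gamma)$, one pulls back along the sequence $\alpha_n$ witnessing conicality, extracts a bi-asymptotic limit pair, and then invokes regularity (Axiom~2) to conclude the limits coincide up to reparametrization, hence $d(c_{\bm v}(t+a),c_{\bm w}(t))\to 0$. Regularity is thus used twice---once for the Hopf parametrization, and once here---and the second use is the one you are missing.
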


The conical limit set $L_c(\Gamma)$ is a subset of the limit set of $\Gamma$, denoted by $L(\Gamma)$. A key observation is that, the set of conservative points on $T^1M$, say $M_{C}$, can be characterized by their future limit points lying in $L_c(\Gamma)\subset L(\Gamma)$ (Proposition~\ref{prop_6_2}). This reveals the relation between the dynamics of $\phi_t$ on $T^1M$ and the $\Gamma$-action on the ideal boundary.

In establishing our main theorem, the shadow lemma of the PS measure plays a key role. The shadow lemma shows that the $\mu_p$-measure of the projection balls $\text{pr}_p(B(\alpha p,r))\subset \widetilde{M}(\infty)$ is of $O(e^{-\delta_{\Gamma}d(\alpha p, p)})$ as $d(\alpha p, p)\rightarrow \infty$. This estimation (known as Sullivan shadow lemma) and its generalizations (such as Mohsen shadow lemma) was first proven on manifolds with negative curvature, and further extended to more general cases (cf.~\cite{Su2, PPS, Kn4, Yue1}). In this paper, we proved the shadow lemmas on a more general kind of manifolds. As a key tool in the quantitative analysis related to the PS measure, the shadow lemma is widely used, but not limited to, the HTS dichotomy.

We remark that the ultimate goal is to study if the HTS dichotomy holds on manifolds with no conjugate points, which remains open for decades if no extra condition added. But if we narrow down to the case of no focal points, we could get the results under much milder conditions. With the help of Flat Strip Theorem, we are able to define the BMS measure without the need of manifolds being regular. Carefully dealing with the flat strips (if any), we establish the HTS dichotomy on visibility manifolds with no conjugate points, under a technical condition that the width of these flat strips has a positive lower bound. 

This article is organized in the following way: In Section~\ref{sec2}, we give the definitions of basic concepts and show some basic geometric properties, as the preparation of further discussions. Section~\ref{sec3} focuses on the conformal density and construct the PS measure. The Poincar\'e series and the critical exponent $\delta_{\Gamma}$ will also be discussed there.  The Sullivan shadow lemma and the Mohsen shadow lemma under our settings are proven in Section~\ref{sec4}.  Based on the shadow lemma, the equivalence relation between the divergence of the Poincar\'e series at $\delta_{\Gamma}$ and the ergodicity of the $\Gamma$-action on $\widetilde{M}(\infty)$ with respect to the PS measure is proven in Section~\ref{sec5}. The main theorem is proved in Section~\ref{sec6}. In Section~\ref{sec7}, we adapt our results to the manifolds with no focal points under weaker conditions.

\section{\bf Basic Geometric Properties}\label{sec2}
Let $(M,g)$ be a Riemannian manifold and $(\widetilde{M},\widetilde{g})$ be its universal cover. We abuse the notations a little and use $d$ to denote the distance function induced by both $g$ on $M$ and $\widetilde{g}$ on $\widetilde{M}$ when there is no confusion. Denoted the geodesic flows on $M$ by $c(t)$, and we always assume they are of unit speed throughout the article. Let $\Gamma=\pi_1(M)$, which is a discrete subgroup of the isometric group of $\widetilde{M}$. We know that $M=\widetilde{M}/\Gamma$.

Let $T^1 M$ and $T^1\widetilde{M}$ denote the unit tangent bundles with the standard projection map $\pi: T^1M\to M$ and $\pi: T^1\widetilde{M}\to \widetilde{M}$. We use $\pi$ for both when there is no confusion.

\begin{definition}
    Let $c=c(t)$ be a geodesic in $M$. We say two points $p_0=c(t_0)$ and $p_1=c(t_1)$ on the geodesic $c$ are \emph{conjugate}, if there is a non-trivial Jacobi field $\bm{J}$, such that
	\begin{displaymath}
        \bm{J}(t_0)=\bm{J}(t_1)=0.
	\end{displaymath}
\end{definition}

We call $M$ is a \emph{manifold with no conjugate points} if no geodesic on $M$ admits conjugate points. A very straightforward example of a pair of conjugate points are the antipodal points on the standard sphere. By definition, manifolds with non-positive curvature have no conjugate points. But no conjugate points does not necessarily imply non-positive curvature. In fact, there are examples of manifolds with no conjugate points admits positive sectional curvature (cf.~\cite{Gu}).

In study the manifold with no conjugate points, visibility is often assumed to guarantee some important geometric properties.

\begin{definition}[Visibility]
    Say $\widetilde{M}$ satisfies the \emph{visibility axiom} if for any $p\in\widetilde{M}$, $\epsilon>0$, there exists $R=R(p,\epsilon)>0$, such that for any geodesic (segment) $c:[a,b] \to \widetilde{M}$ with $d(p,c)\geq R$, we have $\measuredangle_p(c(a),c(b))\leq\epsilon$. Here we allow $a$ and $b$ to be infinity.

    Say $\widetilde{M}$ satisfies the \emph{uniform visibility axiom} if the constant $R$ can be chosen independently of the choice of $p$. Say $M$ satisfies the (uniform) visibility axiom if its universal cover $\widetilde{M}$ does. We also call such manifold a (uniform) visibility manifold.
\end{definition}

The introduction of visibility is very natural. It is proved that manifolds with negative sectional curvature always satisfy the visibility axiom as shown in the following proposition.

\begin{proposition}[\cite{BO}]
	Let $M$ be a complete, simply-connected manifold with sectional curvature $\kappa\leq a <0$ for some negative constant $a$. Then $M$ satisfies the uniform visibility axiom.
\end{proposition}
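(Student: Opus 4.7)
The plan is to reduce the problem to a sharp, explicit computation in the model space of constant curvature $a$ (hyperbolic space $\mathbb{H}^n_a$) via Rauch/Toponogov comparison, exploiting the fact that Cartan-Hadamard guarantees any two points of $M$ are joined by a unique minimizing geodesic.

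First, I would reduce to the case of a finite segment: given a geodesic $c:[a',b']\to\widetilde{M}$ (possibly with $a'=-\infty$ or $b'=+\infty$), the angle $\measuredangle_p(c(a'),c(b'))$ is defined as a limit of the angles $\measuredangle_p(c(a_n),c(b_n))$ for finite exhaustions $a_n\searrow a'$, $b_n\nearrow b'$, and the distance of each subsegment from $p$ is still at least $R$. So I only need to bound the angle at $p$ of a geodesic triangle $\Delta(p,x,y)$ in $M$, where $x=c(a_n)$, $y=c(b_n)$, and the opposite side lies at distance $\geq R$ from $p$.

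Next, I would invoke the Toponogov-type comparison (or, equivalently, Rauch II) applied to this triangle, using the uniform curvature bound $\kappa\leq a<0$. The conclusion is that there exists a comparison triangle $\Delta(\bar p,\bar x,\bar y)$ in the model plane of constant curvature $a$ with the same side lengths, and that the angle at $\bar p$ dominates the angle at $p$. Moreover, the distance from $\bar p$ to the side $\overline{\bar x\bar y}$ is at least the distance from $p$ to the side $\overline{xy}$ in $M$ — a standard consequence of the comparison — so the comparison side also stays at distance $\geq R$ from $\bar p$.

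Finally, I would carry out the explicit hyperbolic computation in the model space. Let $h\geq R$ be the distance from $\bar p$ to the geodesic line extending $\overline{\bar x\bar y}$ and let $\bar q$ be the foot of the perpendicular. By hyperbolic trigonometry, any point on the line at signed arclength $s$ from $\bar q$ subtends at $\bar p$ an angle $\theta(s)$ with $\tan\theta(s)=\sinh(\sqrt{|a|}\,s)/(\sqrt{|a|}\sinh(\sqrt{|a|}\,h)\cdot\text{something})$; the point is that the full angle subtended by the entire line is bounded by a function $\epsilon(R)\to 0$ as $R\to\infty$, depending only on $a$ (explicitly one gets a bound of the form $\epsilon(R)\leq 2\arcsin(1/\cosh(\sqrt{|a|}R))$). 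This bound depends solely on $R$ and $a$, not on $p$, proving the \emph{uniform} visibility axiom. The main technical point to be careful with is the passage to infinite geodesic rays/lines, which is handled cleanly by the exhaustion argument above; the comparison-geometry inputs are entirely standard for curvature bounded above by a negative constant.
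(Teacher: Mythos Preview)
The paper does not supply a proof of this proposition; it is quoted as a known result from Bishop--O'Neill. Your outline is the standard argument and is correct: the CAT($a$) comparison (upper sectional curvature bound) yields both that the comparison angle at $\bar p$ dominates $\measuredangle_p(x,y)$ and that the comparison side $\overline{\bar x\,\bar y}$ lies at distance $\geq R$ from $\bar p$; the angle-of-parallelism formula in $\mathbb{H}^2_a$ then gives the uniform bound $2\arcsin\bigl(1/\cosh(\sqrt{|a|}\,R)\bigr)\to 0$.

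One terminological correction: the comparison you need is the Alexandrov/CAT($a$) comparison for curvature bounded \emph{above}. ``Toponogov'' almost always refers to comparison for curvature bounded \emph{below}, and would yield inequalities in the wrong direction (angles in $M$ larger, distances to the opposite side smaller). Your parenthetical ``Rauch II'' is the right pointer. Also, your intermediate hyperbolic-trigonometry formula is garbled, but the final bound you state is correct, and that is all that matters for the argument.
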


But manifolds that satisfy visibility axiom may not necessarily have negative sectional curvature. There are actually a bunch examples of such manifolds. For example,

\begin{theorem}[\cite{Eb1}]
	A closed surface with no conjugate points and genus no less than $2$ satisfies the uniform visibility axiom.\label{thm_2_4}
\end{theorem}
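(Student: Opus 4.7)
The plan is to argue by contradiction, combining compactness of $M$ with the rigidity that the no-conjugate-points condition imposes on a closed surface of negative Euler characteristic. By Cartan--Hadamard (available because there are no conjugate points), $\widetilde{M}$ is diffeomorphic to $\mathbb{R}^2$ and every geodesic is globally minimizing. Since $M$ is closed, $\Gamma = \pi_1(M)$ acts cocompactly by isometries. The genus $\geq 2$ hypothesis gives $\chi(M)<0$; combined with Hopf's theorem (a closed surface with no conjugate points and $\chi = 0$ must be flat), this rules out any global flat structure on $M$, and in particular excludes the torus case.

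Suppose uniform visibility fails. Then there exist $\epsilon_0>0$, geodesic segments $c_n\colon [a_n,b_n]\to \widetilde{M}$, and points $p_n\in\widetilde{M}$ with $d(p_n,c_n)\to\infty$ yet $\measuredangle_{p_n}(c_n(a_n),c_n(b_n))\geq \epsilon_0$. By cocompactness of $\Gamma$, after applying a deck transformation I may assume $p_n$ lies in a fixed compact fundamental domain, and after extracting a subsequence $p_n\to p_\infty$. Let $v_n^{\pm}\in T_{p_n}^1\widetilde{M}$ be the initial unit vectors of the minimizing geodesics from $p_n$ to $c_n(a_n)$ and $c_n(b_n)$ respectively; extracting further, $v_n^{\pm}\to v^{\pm}$ with $\measuredangle(v^-,v^+)\geq \epsilon_0$, giving two distinct geodesic rays $\gamma^{\pm}$ from $p_\infty$. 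Because $d(p_n,c_n)\to\infty$ while the segments $c_n$ subtend an angle bounded below at $p_n$, a triangle-inequality argument (using that geodesics in $\widetilde{M}$ are minimizing) forces $\gamma^-$ and $\gamma^+$ to be positively asymptotic, i.e.\ to determine the same point of $\widetilde{M}(\infty)$.

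The contradiction is then extracted from this asymptotic geodesic bigon. On a surface with no conjugate points, Morse's lemma on bi-asymptotic geodesics in the universal cover forces $\gamma^-$ and $\gamma^+$ to bound a flat strip of positive width in $\widetilde{M}$. By cocompactness of $\Gamma$, this flat strip projects to give a $\phi_t$-invariant set in $T^1 M$ supporting a flat metric, and Green's identity in the no-conjugate-points surface setting ($\int_M K\,dA\leq 0$ with equality iff $M$ is flat) together with Gauss--Bonnet $\int_M K\,dA = 2\pi\chi(M)<0$ yields the desired contradiction: a flat strip in a closed surface with no conjugate points is incompatible with $\chi(M)<0$.

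The main obstacle is the passage from the limiting asymptotic bigon to a genuine flat strip. In the strictly negatively curved case, strict convexity of the distance function gives an immediate contradiction; without curvature bounds one must substitute Morse's theorem on asymptotic geodesics in simply connected surfaces with no conjugate points, and carefully verify that the flat strip produced has positive width (so that it genuinely descends to a nontrivial subset of $M$). This rigidity step, linking the qualitative failure of visibility to the quantitative geometry of flat strips, is where the bulk of the work lies and where the surface dimension (allowing Gauss--Bonnet) is essential.
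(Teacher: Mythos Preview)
The paper does not give its own proof of this theorem; it is quoted from \cite{Eb1} as background, so there is no ``paper's proof'' to compare against. That said, your sketch has genuine gaps that would prevent it from going through.

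First, the step ``a triangle-inequality argument forces $\gamma^-$ and $\gamma^+$ to be positively asymptotic'' is the heart of the matter and is not justified. Having $d(p_n,c_n)\to\infty$ while the endpoints of $c_n$ subtend angle $\geq\epsilon_0$ at $p_n$ does not, by any triangle inequality, force the limiting rays from $p_\infty$ to stay within bounded distance of each other; indeed, the whole content of visibility is precisely whether such configurations can occur, so you cannot dispose of them with a soft comparison. What one actually extracts in the limit is a geodesic (a subsequential limit of the $c_n$) lying ``beyond'' both rays, not an asymptotic bigon.

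Second, even granting an asymptotic pair, the Flat Strip Theorem is \emph{not} available under the mere hypothesis of no conjugate points; it requires no focal points (or nonpositive curvature), as the paper itself emphasizes in Section~\ref{sec7}. There is no ``Morse lemma'' producing flat strips from bi-asymptotic geodesics in this generality. Third, the closing step is also broken: the existence of a flat strip in $\widetilde{M}$ does not force $M$ to be flat, so there is no contradiction with Gauss--Bonnet and $\chi(M)<0$; a flat strip can coexist with regions of negative curvature.

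Eberlein's actual argument proceeds along entirely different lines: one equips $M$ with an auxiliary hyperbolic metric (available since genus $\geq 2$), observes that the two metrics on $\widetilde{M}$ are bi-Lipschitz by compactness, so $g$-geodesics are quasi-geodesics for the hyperbolic metric, and then invokes the Morse lemma for $\mathbb{H}^2$ to conclude that $g$-geodesics track hyperbolic geodesics uniformly. Uniform visibility for $(\widetilde{M},g)$ then follows from uniform visibility of $\mathbb{H}^2$.
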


There are also examples of homogeneous manifolds in higher dimension that satisfy visibility axiom but has zero sectional curvature at every point. Refer to Remark 1.8.4 in~\cite{Eb4} for more details. Hence, we can see visibility manifolds are actually non-trivial generalization of manifolds with negative sectional curvature.

Next, we will introduce the construction of the ideal boundary and the cone topology.

\begin{definition}
	Let $c_1$ and $c_2$ be two geodesics in $\widetilde{M}$.
	\begin{itemize}
		\item We say they are \emph{positively asymptotic}, if there is a positive constant $D$, such that
		      \begin{displaymath}
			      d(c_1(t),c_2(t))\leq D, \quad\forall t\geq 0
		      \end{displaymath}
		\item We say they are \emph{negatively asymptotic}, if there is a positive constant $D$, such that
		      \begin{displaymath}
			      d(c_1(t),c_2(t))\leq D, \quad\forall t\leq 0
		      \end{displaymath}
		\item We say they are \emph{bi-asymptotic}, if they are both positively and negatively asymptote.
	\end{itemize}
\end{definition}

It is clear that the positively/negatively asymptote constitute an equivalence relation among geodesics on $\widetilde{M}$. Let $c(+\infty)$ and $c(-\infty)$ denote the equivalence classes containing the geodesics positively/negatively asymptote to $c$. We call such set of equivalence classes the points at infinity and denote it by $\widetilde{M}(\infty)$. This set is the ideal boundary of the universal cover.

\begin{proposition}[\cite{Eb1}]\label{prop_2_6}
    Let $\widetilde{M}$ be a visibility manifold with no conjugate points. Then for any point $p\in\widetilde{M}$ and $\xi\in\widetilde{M}(\infty)$, there is a unique connecting geodesic $c_{p,\xi}$ with 
    \begin{displaymath}
        c_{p,\xi}(0)=p,\quad c_{p,\xi}(+\infty)=\xi.
    \end{displaymath}
\end{proposition}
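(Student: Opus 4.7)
The plan is to prove uniqueness and existence separately, leaning on the visibility axiom in both parts. A key preliminary is that $\widetilde{M}$, being simply connected with no conjugate points, satisfies a Cartan--Hadamard-type conclusion: the exponential map $\exp_p : T_p \widetilde{M} \to \widetilde{M}$ is a diffeomorphism, so the minimizing geodesic between any two points of $\widetilde{M}$ exists and is unique.

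For uniqueness, suppose $c_1, c_2$ are two geodesics from $p$ with $c_i(+\infty) = \xi$. They then lie in the same positive-asymptote class, so $d(c_1(t), c_2(t)) \leq D$ for some $D$ and all $t \geq 0$. The unique minimizing segment $\tau_t$ joining $c_1(t)$ and $c_2(t)$ satisfies $d(p, \tau_t) \geq t - D$ by the triangle inequality, hence $d(p, \tau_t) \to \infty$. Applying the visibility axiom to $\tau_t$, one gets $\measuredangle_p(c_1(t), c_2(t)) \leq \epsilon$ once $t$ is large enough; but this angle equals the $t$-independent constant $\measuredangle(c_1'(0), c_2'(0))$. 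Letting $\epsilon \to 0$ forces that angle to vanish, so $c_1 = c_2$.

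For existence, fix any representative $c$ of $\xi$ and, for each $n$, let $\sigma_n$ be the unique geodesic from $p$ to $c(n)$, with initial vector $v_n = \sigma_n'(0) \in T_p^1 \widetilde{M}$. Because $d(p, c(t)) \to \infty$, the tails $c|_{[T, \infty)}$ eventually satisfy $d(p, c|_{[T, \infty)}) \geq R(p, \epsilon)$, so visibility applied to $c|_{[T, n]}$ yields $\measuredangle(v_T, v_n) = \measuredangle_p(c(T), c(n)) \leq \epsilon$ for all sufficiently large $T \leq n$. Thus $(v_n)$ is Cauchy in $T_p^1 \widetilde{M}$, and I set $\sigma(t) := \exp_p(tv)$ where $v = \lim v_n$. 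What remains is to show $\sigma$ is positively asymptotic to $c$. I would obtain this by combining $\sigma_n \to \sigma$ uniformly on compacts with a second pass of visibility: for fixed $t$ and large $n$ one has $d(\sigma(t), c(n)) = d(p, c(n)) - t + o(1)$, so the triangle $(p, \sigma(t), c(n))$ becomes asymptotically degenerate, and visibility applied to geodesic segments of the form $[\sigma(t), c(s)]$ with $s, t$ large — which are forced to stay far from $p$ — translates the angular bound into a uniform metric bound $d(\sigma(t), c) \leq D'$ for every $t \geq 0$.

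The main obstacle is precisely this last asymptoticity step. Uniqueness and the Cauchy argument for $v_n$ are clean one-shot applications of visibility, but converting angular control at $p$ into a uniform metric bound along the full geodesic $\sigma$ requires a more delicate, iterated use of visibility and is where the technical bookkeeping will live.
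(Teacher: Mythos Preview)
The paper does not supply its own proof of this proposition; it is quoted directly from Eberlein~\cite{Eb1}. So there is nothing in the paper to compare against, and your proposal should be read as an independent reconstruction of Eberlein's argument.

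Your uniqueness argument is clean and correct: since both rays emanate from $p$, the angle $\measuredangle_p(c_1(t),c_2(t))$ really is the constant $\measuredangle(c_1'(0),c_2'(0))$, and visibility applied to the short segment $\tau_t$ forces that constant to be zero. The Cauchy argument for $(v_n)$ is likewise correct and is exactly the classical first step.

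The asymptoticity step, however, does not go through the way you sketch it. You propose to apply visibility at $p$ to the segments $[\sigma(t),c(s)]$ with $s,t$ large, arguing they ``are forced to stay far from~$p$.'' But visibility at $p$ tells you nothing here: the angle $\measuredangle_p(\sigma(t),c(s))=\measuredangle(v,v_s)$ is \emph{small}, not bounded below, so the contrapositive of the visibility axiom gives no distance information about that segment. What you actually need is an angular lower bound, and the natural place to get one is at the moving point $\sigma(t)$ (or $\sigma_n(t)$), not at $p$. Since $\sigma_n$ is a geodesic through $p$ and $c(n)$, one has $\measuredangle_{\sigma_n(t)}(p,c(n))=\pi$; passing to the limit and subtracting the angle subtended by the fixed segment $[p,c(0)]$ gives a uniform lower bound on $\measuredangle_{\sigma(t)}(c(0),c(n))$, and then visibility at $\sigma(t)$ applied to $c|_{[0,n]}$ yields $d(\sigma(t),c)\le R$. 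The bookkeeping here is exactly the ``iterated use of visibility'' you anticipated, but the basepoint must move with $t$; under mere (non-uniform) visibility this requires some care, which is why Eberlein's original argument is worth consulting.
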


Here connecting $\xi$ means the geodesic $c_{p,\xi}$ is positively asymptotic to $\xi$. Therefore, given two different vectors $\bm{v}_1,\bm{v}_2\in T_p^1\widetilde{M}$, we have $\lim\limits_{t\to\infty}d(c_{\bm{v}_1}(t),c_{\bm{v}_2}(t))=\infty$, where $c_{\bm{v}}{(t)}$ denotes the geodesic with $c'_{\bm{v}}(0)=\bm{v}$. This suggests that the ideal boundary is diffeomorphic to $T_p^1\widetilde{M}$. In fact, by Cartan-Hadamard Theorem, $\widetilde{M}$ is diffeomorphic to $\mathbb{R}^{\text{dim}M}$ and the ideal boundary $\widetilde{M}(\infty)$ homeomorphic to $\mathbb{S}^{\text{dim}M-1}$.

To elaborate this homeomorphism, we introduce some notations here and construct the topology on the ideal boundary, which is known as the cone topology.

Give a point $p\in\widetilde{M}$ and let $c_{p,q}$ be the unique geodesic connecting $p$ and $q$ with $c_{p,q}(0)=p$. We measure the angles between geodesics as following:

\begin{displaymath}
	\begin{aligned}
		\measuredangle_p(x,y)      & =\measuredangle(c'_{p,x}(0),c'_{p,y}(0)),\quad x,y\neq p,                        \\
		\measuredangle_p(A)        & =\sup\limits_{x,y\in A}\measuredangle_p(x,y),                                    \\
		\measuredangle_p(x,\bm{v}) & =\measuredangle(c'_{p,x}(0),\bm{v}),\quad \bm{v}\in T_p^1\widetilde{M}, x\neq p. \\
	\end{aligned}
\end{displaymath}

Next define the set of $\widetilde{M}\cup\widetilde{M}(\infty)$ concerning the angles. Let $\bm{v}\in T_p^1\widetilde{M}$,
\begin{displaymath}
	\begin{aligned}
		C(\bm{v},\epsilon)    & =\{x\neq p\mid \measuredangle_p(x,\bm{v})<\epsilon\},                                                                         \\
		C_\epsilon(\bm{v})    & =\{c_{\bm{w}}(+\infty)\mid \bm{w}\in T^1_p\widetilde{M},\measuredangle(\bm{v},\bm{w})<\epsilon\}\subset\widetilde{M}(\infty), \\
		TC(\bm{v},\epsilon,r) & =\{x\in\widetilde{M}\cup\widetilde{M}(\infty)\mid \measuredangle_p(x,\bm{v})<\epsilon, d(p,x)> r\}.
	\end{aligned}
\end{displaymath}

The last notation $TC(\bm{v},\epsilon,r)$ is called the \emph{truncated cone with axis $\bm{v}$ and angle $\epsilon$}. By the definition, we can see the point on the ideal boundary $\xi=\gamma_{\bm{v}}(+\infty)\in TC(\bm{v},\epsilon,r)$. In general, for any point $\xi\in\widetilde{M}(\infty)$, the set of truncated cones containing this point actually forms local bases and hence, forms the bases for a topology $\tau$. This topology is unique and usually called the cone topology. Under this topology, $\widetilde{M}$ is homeomorphic to $\mathbb{R}^{\text{dim}M}$ and the ideal boundary $\widetilde{M}(\infty)$ is homeomorphic to $\mathbb{S}^{\text{dim}M-1}$. For a more detailed discussion on this topology, refer to~\cite{Eb3}.

On visibility manifolds with no conjugate points, we have the following geometric properties needed:

\begin{proposition}\label{prop_2_7}
	Assume $\widetilde{M}$ to be the universal cover of a uniform visibility manifold $M$ with no conjugate points.
	\begin{enumerate}
		\item\cite{LW} For any two positive asymptote geodesic $c_1$ and $c_2$, we have the distance between is bounded:
		\begin{displaymath}
		    d(c_1(t),c_2(t))\leq 2R(\pi/2)+3d(c_1(0),c_2(0)).	
		\end{displaymath}
        Here the $R(\frac{\pi}{2})$ is the uniform visibility constant.
		\item\cite{Eb1} The following map is continuous:
		\begin{displaymath}
			\begin{aligned}
				\Psi:~& T^1\widetilde{M}\times[-\infty,\infty] & \to     &~\widetilde{M}\cup\widetilde{M}(\infty), \\
				       & \qquad(\bm{v},t)                       & \mapsto &~c_{\bm{v}}(t).
			\end{aligned}
		\end{displaymath}
    \item For any $\bm{v}\in T^1{\widetilde{M}}$ and $R,\epsilon$ positive, there is a constant $L=L(\epsilon, R)$, such that for any $t>L$, we have
        \begin{displaymath}
            B(c_{\bm{v}}(t),R)\subset C(\bm{v},\epsilon).
        \end{displaymath}
        Here $B(c_{\bm{v}}(t),R)$ is the open ball centered at $c_{\bm{v}}(t)$ with radius $R$.
    \item\cite{Eb3} For any two points $\xi\neq\eta$ on the ideal boundary, there is at least one connecting geodesic of them. 
	\end{enumerate}
\end{proposition}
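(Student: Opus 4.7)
The plan is to dispose of parts (1), (2), and (4) by direct appeal to the cited references, since they are standard consequences of visibility and non-conjugacy: part (1) is the bounded-asymptote estimate from \cite{LW}, part (2) is Eberlein's continuous-extension result from \cite{Eb1}, and part (4) is the connectability of ideal boundary points proved in \cite{Eb3}. The only part requiring a fresh argument is (3), and for it I would give a short direct proof using the uniform visibility axiom.

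For part (3), let $p=\pi(\bm{v})$ and take any $x\in B(c_{\bm{v}}(t),R)$. Since $c_{\bm{v}}(t)$ lies on the geodesic through $p$ with initial direction $\bm{v}$, we have $\measuredangle_p(c_{\bm{v}}(t),\bm{v})=0$, so by the triangle inequality for angles it suffices to prove $\measuredangle_p(x,c_{\bm{v}}(t))<\epsilon$. The natural object to bring in is the unique geodesic segment $\sigma$ joining $x$ to $c_{\bm{v}}(t)$, which exists by Cartan-Hadamard and has length at most $R$. The key observation is that this entire segment is far from $p$: for any $q\in\sigma$,
\[
d(p,q)\ \geq\ d(p,c_{\bm{v}}(t))-d(q,c_{\bm{v}}(t))\ \geq\ t-R.
\]

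Now I invoke uniform visibility: let $R_{\epsilon/2}$ be the uniform visibility constant associated to angle $\epsilon/2$, and set $L(\epsilon,R):=R_{\epsilon/2}+R$. Whenever $t>L$, the segment $\sigma$ satisfies $d(p,\sigma)\geq t-R>R_{\epsilon/2}$, so by the uniform visibility axiom applied to $\sigma$ (viewed as a geodesic segment with endpoints $x$ and $c_{\bm{v}}(t)$) we obtain $\measuredangle_p(x,c_{\bm{v}}(t))\leq \epsilon/2<\epsilon$. This shows $x\in C(\bm{v},\epsilon)$, completing the argument.

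There is no real obstacle beyond identifying the correct reduction: the length of $\sigma$ is bounded by $R$ regardless of $t$, which forces $\sigma$ to retreat to infinity from $p$ as $t\to\infty$, and uniform visibility then converts this into a uniform angular bound. The fact that the visibility constant does not depend on the choice of basepoint is essential here, since we want $L$ to depend only on $\epsilon$ and $R$ (not on $\bm{v}$ or $p$). Without the uniformity of the visibility axiom this cleaner dependence would be lost, but with it the proof is immediate.
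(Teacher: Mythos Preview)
Your proposal is correct and matches the paper's approach. The paper only remarks that part (3) ``follows from the uniform visibility condition'' and that $L$ is independent of $\bm{v}$, without writing out the argument; your proof is a correct and clean fleshing-out of exactly this idea, using the visibility axiom applied to the short segment from $x$ to $c_{\bm v}(t)$ together with the triangle inequality for angles at $p$.
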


Proposition~\ref{prop_2_7} (3) follows from the uniform visibility condition. And the constant $L$ here does not depend on the choice of $\bm{v}$.

Next, we introduce the Busemann function.

\begin{definition}[Busemann Function]
	We call the following map the Busemann function (or Busemann co-cycle).
	\begin{displaymath}
		\begin{aligned}
			\beta:~&~\widetilde{M}(\infty)\times\widetilde{M}\times\widetilde{M} & \to     &~\mathbb{R}                                               , \\
			        & \qquad(\xi,p,x)                                              & \mapsto &~\beta_{\xi}(p,x)=\lim_{t\to\infty}(d(p,c_{x,\xi}(t))-t).
		\end{aligned}
	\end{displaymath}
\end{definition}

Easy to check that $|\beta_{\xi}(p,x)|\leq d(p,x)$ by the triangle inequality.

\begin{definition}[Horosphere]
	We call the set
	\begin{displaymath}
		H_{\xi}(p)=\{x\in\widetilde{M}\mid\beta_{\xi}(p,x)=0\}
	\end{displaymath}
	the horosphere with center $\xi\in\widetilde{M}(\infty)$, based at $p\in\widetilde{M}$. We can see $\beta_{\xi}(p,x)$ is exactly
	the algebraic length of the part of geodesics bounded between $H_{\xi}(p)$ and $H_{\xi}(x)$ with the common end point $\xi$.
\end{definition}

The last assumption we need for the manifolds we study is that for any two points $\xi\neq\eta$ on the ideal boundary, there is \emph{at most one} geodesic connecting them. This condition holds for manifolds with negative sectional curvature, but also holds under weaker conditions. In fact, this is a natural replacement of the curvature condition. Refer to~\cite{Eb3} for a detailed discussion of this condition, in which it is called the \emph{Axiom 2}. For convenience, we call such manifolds \emph{regular}.

\begin{definition}[\cite{Eb1, Eb3}]
    We call a complete simply connected Riemannian manifolds $\widetilde{M}$ \emph{Regular}, if for any two points $\xi\neq\eta$ on the ideal boundary, there is at most one geodesic connecting them. 
\end{definition}

Together with Proposition~\ref{prop_2_7}~(4), we can argue that for regular manifolds, any two different points $\xi\neq\eta$ on the ideal boundary, there is a unique connecting geodesic.

Let $\widetilde{M}^2(\infty)=\widetilde{M}(\infty)\times\widetilde{M}(\infty)-\{(\xi,\xi)\mid \xi\in\widetilde{M}(\infty)\}$. Given a point $p\in\widetilde{M}$, we define
\begin{displaymath}
	\begin{aligned}
		\beta_p:~& \widetilde{M}^2(\infty) & \to     &~\mathbb{R}^{+},                     \\
		          & (\xi,\eta)              & \mapsto &~\beta_{\xi}(p,x)+\beta_{\eta}(p,x). \\
	\end{aligned}
\end{displaymath}
Here $x\in\widetilde{M}$ is any point on the connecting geodesic $c_{\xi,\eta}$.

No hard to prove that this function does not depend on the choice of $c_{\xi,\eta}$. Nor does $\beta_p$ depend on the choice of $x$. Geometrically, $\beta_p(\xi,\eta)$ is the length of $c_{\xi,\eta}$ bounded between the horospheres $H_{\xi}(p)$ and $H_{\eta}(p)$ as shown in Figure~\ref{fig1}.

\begin{figure}[htbp]
\centering
	\scalebox{0.7}{
		\begin{tikzpicture}[]
			\draw (0,0) circle (4);
			\coordinate [label=left:$\xi$] (X) at (-4,0);
			\node at (X) [circle, fill, inner sep=1pt] {};
			\coordinate [label=right:$\eta$] (Y) at (4,0);
			\node at (Y) [circle, fill, inner sep=1pt] {};
			\draw [->] (X) to [in=170, out=10](Y);
			\draw [blue] (-1,0) circle (3);
			\draw [green] (2,0) circle (2);
			\coordinate [label=above:$p$] (P) at (1.34, 1.88);
			\node at (P) [circle, fill, inner sep=1pt] {};
			\coordinate [label=below:$x$] (Q) at (2.5, 0.24);
			\node at (Q) [circle, fill, inner sep=1pt] {};
			\node at (1,0) [red] {$\scriptstyle\beta_p(\xi,\eta)$};
			\draw [red, very thick] (0.05,0.4) to [in=175, out=1] (1.99,0.30);
		\end{tikzpicture}
	}
	\caption{Illustration of $\beta_p(\xi,\eta)$.}\label{fig1}
\end{figure}
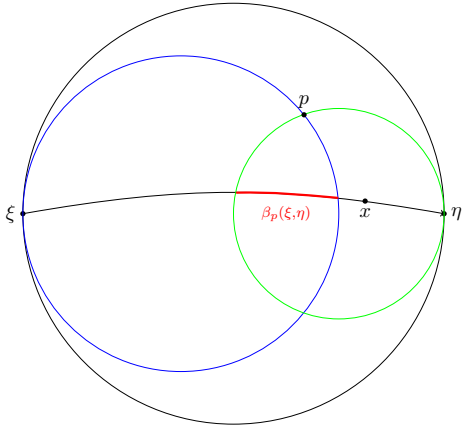

\begin{definition}[Gromov-Bourdon Visual Distance]
	The above defined function $\beta_p$ is called the Gromov product, which induces the Gromov-Bourdon visual distance on the ideal boundary by
	\begin{displaymath}
		d_p(\xi,\eta)=\mathrm{e}^{-\frac 12\beta_p(\xi,\eta)}.
	\end{displaymath}
\end{definition}
\section{\bf Conformal Densities}\label{sec3}
Let $M$ be a complete Riemannian manifold with no conjugate points, $\widetilde{M}$ be its universal cover. Let $\Gamma$ be a discrete subgroup of the isometry group of $\widetilde{M}$ and $p$ be a point in $\widetilde{M}$.

\begin{definition}
	Denote $L(\Gamma)=\widetilde{M}(\infty)\cap\overline{\Gamma(p)}$ where $\overline{\Gamma(p)}$ is the closure of the orbit of $p$ under $\Gamma$ in $\widetilde{M}\cup\widetilde{M}(\infty)$ under the cone topology. We call $L(\Gamma)$ the \emph{limit set} of $\Gamma$. If $\#L(\Gamma)=\infty$, we call $\Gamma$ is non-elementary.
\end{definition}

By Proposition~\ref{prop_2_7}~(3), we can see $L(\Gamma)$ is independent of the choice of $p$.

Next we will introduce the \emph{conformal density}, or \emph{Busemann density}, based on the \emph{Poincar\'e series} with its \emph{critical exponent}.

\begin{definition}
    For a given positive constant $r$, a family of finite Borel measure ${\{\mu_p\}}_{p\in\widetilde{M}}$ on the ideal boundary is called an \emph{$r$-dimensional conformal density} or \emph{Busemann density}, if
	\begin{enumerate}
		\item For each $p\in\widetilde{M}$, the support of $\mu_p$ is contained in $L(\Gamma)$.
		\item For any $p\neq q\in\widetilde{M}$ and $\mu_p$-a.e. $\xi\in\widetilde{M}(\infty)$ on the ideal boundary, we have
		      \begin{displaymath}
			      \frac{\mathrm{d}\mu_p}{\mathrm{d}\mu_q}(\xi)=\mathrm{e}^{-r\beta_{\xi}(p,q)},
		      \end{displaymath}
		      where $\beta_{\xi}(p,q)$ is the Busemann functions defined above.
		\item ${\{\mu_p\}}_{p\in\widetilde{M}}$ is $\Gamma$-equivariant. That is to say, for any Borel subsets $A\subset\widetilde{M}(\infty)$ and $\alpha\in\Gamma$,
		      \begin{displaymath}
			      \mu_{\alpha p}(\alpha A)=\mu_p(A).
		      \end{displaymath}
	\end{enumerate}
\end{definition}

\begin{definition}
	Given a real number $s$ and a pair of point $p,q$ in $\widetilde{M}$, we define the \emph{Poincar\'e Series} as
	\begin{displaymath}
		P(s,p,q)=\sum_{\alpha\in\Gamma}\mathrm{e}^{-s d(p,\alpha q)}.
	\end{displaymath}
	We define the \emph{critical exponent} of this Poincar\'e series as
	\begin{displaymath}
		\delta_\Gamma=\inf\{s\geq 0\mid P(s,p,q)<\infty\}.
	\end{displaymath}
\end{definition}
Easy to check that $\delta_\Gamma$ does not depend on the choice of $p$ or $q$. We call $\Gamma$ is of \emph{divergence type} if $P(\delta_\Gamma,p,q)$ diverges.

In general, given $\Gamma$, it is not easy to determine if the Poincar\'e series converges or diverges at the critical exponent, but Patterson~\cite{Pa2} gave a clever method to skirt around. He constructed a positive, monotonic increasing function $h$ defined on $\mathbb{R}^{+}$, and define
\begin{displaymath}
	\widetilde{P}(s,p,q)=\sum_{\alpha\in\Gamma}h(d(p,\alpha q))\mathrm{e}^{-s d(p,\alpha q)}.
\end{displaymath}

It can be shown that the critical exponent for $\widetilde{P}$ is the same as the one of $P$, and $\widetilde{P}$ is the divergent type. Therefore, we can always assume $\Gamma$ is of divergent type.

Now fix a point $q\in\widetilde{M}$, for any $s>\delta_\Gamma$ and $p\in\widetilde{M}$, we can define the measure
\begin{displaymath}
	\mu_{p,q,s}=\frac{\sum_{\alpha\in\Gamma}\mathrm{e}^{-s d(p,\alpha q)}\delta_{\alpha q}}{P(s,q,q)}.
\end{displaymath}

Here $\delta_{\alpha q}$ is the Dirac measure supported on the point $\alpha q$. From the triangle inequality, we can show that
\begin{enumerate}
	\item $\mu_{p,q,s}(\widetilde{M}\cup\widetilde{M}(\infty))\in[\mathrm{e}^{-s d(p,q)},\mathrm{e}^{s d(p,q)}]$.
	\item $\Gamma(q)\subset \text{supp}(\mu_{p,q,s})\subset \overline{\Gamma(q)}$.
\end{enumerate}

For any $p\in\widetilde{M}$ and a sequence of numbers ${\{s_n\}}_{n=1}^\infty\subset\mathbb{R}^{+}$ with $s_n$ decreasing to $\delta_\Gamma$, the corresponding measures $\mu_{p,q,s_n}$ will converge to some measure in weak-$*$ topology, which is denoted by $\mu_p$:
\begin{displaymath}
	\lim_{s_n\to\delta_\Gamma^+}\mu_{p,q,s_n}\overset{w^\ast}{=}\mu_p.
\end{displaymath}

By definition, the support of each $\mu_{p,q,s_n}$ is on $\{\Gamma q\}$. As we assume $\Gamma$ is of divergent type, $P(s,q,q)$ diverges when $s=\delta_\Gamma$. Together with that the action $\Gamma$ on $\widetilde{M}$ is discrete, after taking the limit, the support of $\mu_p$ will not be on the orbit of $q$ under $\Gamma$, but will be `pushed' to the limit set on ideal boundary. In the end, we conclude $\text{supp}(\mu_p)\subset \overline{\Gamma(q)}\cap\widetilde{M}(\infty)$.
\begin{theorem}
	For any point $p\in\widetilde{M}$, the following weak-$*$ limit exist:
	\begin{displaymath}
		\lim_{n\to\infty}\mu_{p,q,s_n}\overset{w^*}{=}\mu_p.
	\end{displaymath}

This family of measures ${\{\mu_p\}}_{p\in\widetilde{M}}$ is a $\delta_\Gamma$-dimensional conformal density.
\end{theorem}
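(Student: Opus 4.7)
The plan is to fix the reference point $q$, extract a weak-$*$ convergent subsequence of $\mu_{q,q,s_n}$, and then transfer convergence to every other $\mu_{p,q,s_n}$ via a continuous Radon--Nikodym-type density that extends to the ideal boundary. Since $\widetilde{M}\cup\widetilde{M}(\infty)$ is compact in the cone topology and $\mu_{q,q,s}$ is a probability measure for every $s>\delta_\Gamma$, Banach--Alaoglu produces a subsequence (still denoted $s_n$) with $\mu_{q,q,s_n}\overset{w^\ast}{\longrightarrow}\mu_q$ for some Borel probability measure $\mu_q$ on $\widetilde{M}\cup\widetilde{M}(\infty)$; this single subsequence will then be used uniformly for every base point $p$.

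For an arbitrary $p\in\widetilde{M}$, I would record the exact identity
\[
    \mu_{p,q,s}=f_{p,s}\cdot\mu_{q,q,s},\qquad f_{p,s}(x):=\mathrm{e}^{-s(d(p,x)-d(q,x))},
\]
which is evident on the orbit $\Gamma q$. The decisive geometric step is to check that $f_{p,s}$ extends continuously to $\widetilde{M}\cup\widetilde{M}(\infty)$ by setting $f_{p,s}(\xi):=\mathrm{e}^{-s\beta_\xi(p,q)}$; this in turn reduces to the convergence $d(p,x_n)-d(q,x_n)\to\beta_\xi(p,q)$ whenever $x_n\to\xi$ in the cone topology, which I would derive from Proposition~\ref{prop_2_7} via continuity of the endpoint map $\Psi$ and convergence of the connecting rays $c_{q,x_n}$ to $c_{q,\xi}$ on compacta. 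Granted continuity, the uniform bound $|d(p,x)-d(q,x)|\leq d(p,q)$ forces $f_{p,s_n}\to f_{p,\delta_\Gamma}$ uniformly on the compact space, and combined with weak-$\ast$ convergence of $\mu_{q,q,s_n}$ this yields
\[
    \mu_{p,q,s_n}\;\overset{w^\ast}{\longrightarrow}\;f_{p,\delta_\Gamma}\cdot\mu_q=:\mu_p.
\]

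With existence in hand, verifying the three properties of a $\delta_\Gamma$-conformal density is essentially formal. The relation $\mathrm{d}\mu_p/\mathrm{d}\mu_q(\xi)=\mathrm{e}^{-\delta_\Gamma\beta_\xi(p,q)}$ is immediate from $\mu_p=f_{p,\delta_\Gamma}\mu_q$ together with the explicit boundary values of $f_{p,\delta_\Gamma}$. For $\Gamma$-equivariance, the change of variables $\gamma=\alpha^{-1}\beta$ gives $\int h(\alpha x)\,\mathrm{d}\mu_{p,q,s}(x)=\int h\,\mathrm{d}\mu_{\alpha p,q,s}$ at each pre-limit level for every continuous $h$, and weak-$\ast$ convergence passes this to the limit, yielding $\mu_{\alpha p}(\alpha A)=\mu_p(A)$ on Borel sets. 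For the support property, the divergent-type assumption (arranged by Patterson's slow-growth modification when necessary) ensures $P(s,q,q)\to\infty$ as $s\to\delta_\Gamma^+$, so any compact subset of $\widetilde{M}$ carries asymptotically vanishing $\mu_{p,q,s_n}$-mass, forcing the limiting measure onto $\widetilde{M}(\infty)\cap\overline{\Gamma(q)}=L(\Gamma)$. The main obstacle throughout is the continuous extension of $f_{p,s}$ to the ideal boundary, i.e.\ the convergence of the Busemann cocycle $d(p,x_n)-d(q,x_n)\to\beta_\xi(p,q)$; this is where uniform visibility enters essentially, because otherwise the connecting segments $c_{q,x_n}$ need not approach $c_{q,\xi}$ uniformly on compacta and the pointwise limit of the cocycle could fail to equal $\beta_\xi(p,q)$.
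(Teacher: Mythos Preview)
The paper does not supply a formal proof of this theorem; the surrounding text only sketches the construction (Patterson's modification $h$, the definition of $\mu_{p,q,s}$, the total-mass bounds, and the observation that divergence at $\delta_\Gamma$ pushes the support onto $L(\Gamma)$). Your proposal fills in precisely that sketch with the standard Patterson--Sullivan argument, so the approaches coincide in spirit, with yours simply being more detailed.

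One remark on the point you flagged as the main obstacle: you are right that the continuous extension of $f_{p,s}(x)=\mathrm{e}^{-s(d(p,x)-d(q,x))}$ to $\widetilde{M}(\infty)$, i.e.\ the convergence $d(p,x_n)-d(q,x_n)\to\beta_\xi(p,q)$ for $x_n\to\xi$ in the cone topology, is the substantive step in this setting. The paper never isolates this as a lemma, so if you want your write-up to be self-contained you should spell out how Proposition~\ref{prop_2_7}(2) (continuity of the endpoint map $\Psi$) and the uniform bound from Proposition~\ref{prop_2_7}(1) give convergence of $c_{q,x_n}$ to $c_{q,\xi}$ on compacta, and hence the desired limit of the horospherical cocycle. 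Everything else in your outline---the $\Gamma$-equivariance via the substitution $\gamma=\alpha^{-1}\beta$, the Radon--Nikodym identity, and the support argument from divergence of $P(s,q,q)$---is routine and matches the paper's informal discussion.
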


In fact, in~\cite{Su2} Sullivan showed that the dimension of the limit set, or the support of $\mu_p$, is exactly $\delta_\Gamma$. A $\delta_\Gamma$-dimensional conformal density constructed in this way is called a \emph{Patterson-Sullivan (PS) measure}.  Roughly speaking, the PS measure is a realization of the conformal density at the critical exponent. As a remark, we need to point out that the definition does not guarantee the uniqueness of the weak-$*$ limit. The different choices of the sequence ${\{s_n\}}_{n=1}^\infty$ may result in different weak-$*$ limits. But the uniqueness holds for a rich type of manifolds, like rank 1 manifolds with non-positive curvature~\cite{Kn4}. In Section~\ref{sec5}, we will show the uniqueness of the PS measure in our setting in Proposition~\ref{prop_5_7}.

Another thing need to mention is that although $\Gamma$ is an isometry group acting on $\widetilde{M}$, we can extend the action to $\widetilde{M}(\infty)$ without any difficulties. Given any $\alpha\in\Gamma$ and $\xi\in\widetilde{M}(\infty)$ with any geodesic $c$ in $\widetilde{M}$ with $c(+\infty)=\xi$, we can define
\begin{displaymath}
	\alpha(\xi)=(\alpha\circ c)(+\infty).
\end{displaymath}
Easy to check, this action does not depend on the choice of $c$ and induce a homeomorphism of $\widetilde{M}(\infty)$ under the cone topology.

This enables us to talk about the ergodicity of the action of $\Gamma$ with respect to an $r$-dimensional conformal density $\mu={\{\mu_p\}}_{p\in\widetilde{M}}$.

\begin{definition}
	We call $\Gamma$ is \emph{ergodic} with respect to an r-dimensional density $\mu$, if for any $\Gamma$-invariant Borel set $A\subset\widetilde{M}(\infty)$, there exists a point $p\in\widetilde{M}$ (and hence for every point in $\widetilde{M}$), such that $\mu_p(A)\cdot\mu_p(A^{c})=0$.
\end{definition}

A well-known result by Yue~\cite{Yue1} shows that the ergodicity is equivalent to the uniqueness of the conformal density, up to a positive multiplier.

\begin{proposition}\label{prop_3_6}
	Let $\mu={\{\mu_p\}}_{p\in\widetilde{M}}$ is an $r$-dimensional conformal density on the ideal boundary $\widetilde{M}(\infty)$, and $C(r)$ is the collection of all $r$-dimensional conformal densities on $\widetilde{M}(\infty)$. Then we have
	\begin{displaymath}
		C(r)=\{\lambda\mu \mid \lambda>0\} \quad \text{if and only if}\quad \Gamma~\text{is ergodic with respect to } \mu.
	\end{displaymath}
\end{proposition}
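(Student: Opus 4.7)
My plan is to prove the two directions of the biconditional separately, using a standard Radon-Nikodym argument.

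For the direction \emph{uniqueness implies ergodicity}, I would argue by contradiction. Assume $\Gamma$ is not ergodic with respect to $\mu$: there is a $\Gamma$-invariant Borel set $A\subset\widetilde{M}(\infty)$ with $\mu_p(A)>0$ and $\mu_p(A^c)>0$ for some (hence every) $p\in\widetilde{M}$. Define $\nu_p=\mu_p|_A$, the restriction of $\mu_p$ to $A$. The three axioms of a conformal density are easy to verify for $\nu$: the support condition is inherited from $\mu$, the conformal scaling property $\mathrm{d}\nu_p/\mathrm{d}\nu_q(\xi)=\mathrm{e}^{-r\beta_{\xi}(p,q)}$ holds for $\nu_p$-a.e.\ $\xi$ because it held for $\mu$, and $\Gamma$-equivariance uses crucially the $\Gamma$-invariance of $A$. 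Thus $\nu\in C(r)$; by the uniqueness hypothesis, $\nu_p=\lambda\mu_p$ for some $\lambda>0$. Yet $\nu_p(A^c)=0$ while $\lambda\mu_p(A^c)>0$, a contradiction.

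For the converse, \emph{ergodicity implies uniqueness}, I would take any other density $\nu=\{\nu_p\}\in C(r)$, form $\sigma_p=\mu_p+\nu_p$, and observe that $\sigma\in C(r)$ since the three axioms are linear and hence preserved under addition. Because $\mu_p\ll\sigma_p$, Radon-Nikodym provides $f_p$ with $\mu_p=f_p\,\sigma_p$. The first key step is to show $f_p$ is independent of $p$: since both $\mathrm{d}\mu_p/\mathrm{d}\mu_q$ and $\mathrm{d}\sigma_p/\mathrm{d}\sigma_q$ equal $\mathrm{e}^{-r\beta_{\xi}(p,q)}$, the chain rule makes these factors cancel, yielding $f_p(\xi)=f_q(\xi)$ for $\sigma_q$-a.e. $\xi$; call the common value $f$. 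The second step is to show $f$ is $\Gamma$-invariant: the $\Gamma$-equivariance of $\mu$ and $\sigma$ gives $\alpha_*\mu_p=\mu_{\alpha p}$ and $\alpha_*\sigma_p=\sigma_{\alpha p}$, from which $f\circ\alpha^{-1}=f$ almost everywhere. Since $\Gamma$ is countable the exceptional null sets union to a null set, and after adjustment $f$ becomes a genuinely $\Gamma$-invariant Borel function. Finally, ergodicity of $\Gamma$ with respect to $\mu$ applied to the level sets $\{f>c\}$ forces $f$ to be a constant $\lambda$ almost everywhere, so $\mu_p=\lambda\sigma_p=\lambda(\mu_p+\nu_p)$, whence $\nu=\tfrac{1-\lambda}{\lambda}\mu$ is a positive scalar multiple of $\mu$.

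The main obstacle, such as it is, is the measure-theoretic bookkeeping of null sets: the Radon-Nikodym derivatives are only defined up to null sets, the conformal scaling identities hold only almost everywhere, and the $p$-independence and $\Gamma$-invariance of $f$ both have to be deduced from these almost-everywhere statements. Countability of $\Gamma$ is essential to ensure that countable unions of exceptional sets remain null, allowing one to pass from an a.e.\ invariant function to a genuinely invariant one on which the ergodicity hypothesis can be applied. Beyond this bookkeeping, the argument is a clean application of Radon-Nikodym and the definition of ergodicity, and no new geometric input beyond the three axioms of a conformal density is required.
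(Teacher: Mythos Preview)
The paper does not give its own proof of this proposition; it is stated as a well-known result and attributed to Yue~\cite{Yue1} with no argument supplied. So there is nothing in the paper to compare your proof against.

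That said, let me comment on your argument itself. The direction \emph{uniqueness $\Rightarrow$ ergodicity} is correct and standard: restricting $\mu$ to a nontrivial $\Gamma$-invariant set produces a second density, contradicting uniqueness.

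In the direction \emph{ergodicity $\Rightarrow$ uniqueness}, however, there is a genuine gap. Ergodicity of $\Gamma$ with respect to $\mu$ tells you that the $\Gamma$-invariant function $f=\mathrm{d}\mu_p/\mathrm{d}\sigma_p$ is constant $\mu_p$-almost everywhere, say equal to $\lambda_0>0$. But to conclude $\mu_p=\lambda_0\sigma_p$ you need $f=\lambda_0$ $\sigma_p$-almost everywhere, which does not follow. Indeed, from $\mu_p=f\sigma_p$ and $f=\lambda_0$ $\mu_p$-a.e.\ one only deduces that $f\in\{0,\lambda_0\}$ $\sigma_p$-a.e.; the $\Gamma$-invariant set $\{f=0\}$ has $\mu_p$-measure zero but could have positive $\sigma_p$-measure, in which case $\nu_p|_{\{f=0\}}$ is a nonzero $r$-dimensional conformal density \emph{singular} to $\mu$. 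Your argument therefore proves only that any $\nu\in C(r)$ decomposes as $c\mu+\nu^s$ with $\nu^s\perp\mu$, which is the statement that $\mu$ is an \emph{extreme point} of $C(r)$, not that it is the unique ray. Ruling out such a singular $\nu^s$ requires additional input (for the Patterson--Sullivan density at the critical exponent this is typically supplied by the shadow lemma or by divergence of the Poincar\'e series), and is presumably what Yue's cited argument addresses.
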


\section{\bf The Shadow Lemma}\label{sec4}
In this section, we are going to talk about the shadow lemma as the preparation of the proof to our main theorem. We start with the definition of the shadow.

\begin{definition}
	For any pair of point $p,q\in\widetilde{M}$ and $r>0$, define the shadow of the ball $B(q,r)$ from $p$ is a subset of $\widetilde{M}(\infty)$:
	\begin{displaymath}
		\text{pr}_p(B(q,r))=\{c_{p,z}(+\infty)\mid z\in B(q,r)\}.
	\end{displaymath}
\end{definition}

\begin{lemma}\label{lem_4_2}
	For any pair of points $p,q\in\widetilde{M}$, $r>0$ and any $\xi\in\text{pr}_p(B(q,r))$, we have
	\begin{displaymath}
		d(p,q)-2r\leq\beta_\xi(p,q)\leq d(p,q).
	\end{displaymath}
\end{lemma}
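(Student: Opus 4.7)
The estimate splits into two one-sided bounds, and the plan is to handle each directly from the definition of the Busemann function together with the fact that $\xi$ is, by hypothesis, the forward endpoint of a geodesic from $p$ passing within distance $r$ of $q$.

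For the upper bound $\beta_\xi(p,q) \le d(p,q)$, I would just unpack the definition: since $c_{q,\xi}$ is a unit-speed ray based at $q$, the triangle inequality gives $d(p,c_{q,\xi}(t)) \le d(p,q) + d(q,c_{q,\xi}(t)) = d(p,q) + t$, so $d(p,c_{q,\xi}(t)) - t \le d(p,q)$ for every $t \ge 0$; passing to the limit yields the claim. This is essentially the inequality $|\beta_\xi(\cdot,\cdot)| \le d(\cdot,\cdot)$ already noted right after the definition of $\beta$.

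For the lower bound, I would use the hypothesis $\xi \in \mathrm{pr}_p(B(q,r))$ to produce a point $z \in B(q,r)$ with $\xi = c_{p,z}(+\infty)$. The key observation is that when $z$ lies on the ray $c_{p,\xi}$ from $p$ to $\xi$, the Busemann value $\beta_\xi(p,z)$ is exactly $d(p,z)$: parametrize the unique geodesic from $p$ to $\xi$ (unique by Proposition~\ref{prop_2_6}) so that $z$ occurs at parameter $d(p,z)$, and observe $c_{z,\xi}(t)$ is the continuation of this geodesic, giving $d(p,c_{z,\xi}(t)) - t = d(p,z)$ for all $t \ge 0$.

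From here I would combine two facts. First, the co-cycle identity $\beta_\xi(p,q) = \beta_\xi(p,z) + \beta_\xi(z,q)$, which follows directly from the definition. Second, the coarse bound $|\beta_\xi(z,q)| \le d(z,q) < r$ together with $d(p,z) \ge d(p,q) - d(q,z) \ge d(p,q) - r$. Plugging these into the co-cycle identity yields
\[
\beta_\xi(p,q) \;\ge\; d(p,z) - r \;\ge\; d(p,q) - 2r,
\]
which finishes the proof. There is no real obstacle here; the only point requiring a bit of care is the use of Proposition~\ref{prop_2_6} to identify the ray $c_{z,\xi}$ with the forward continuation of $c_{p,\xi}$ past $z$, which is exactly what makes $\beta_\xi(p,z) = d(p,z)$ rather than merely $\le d(p,z)$.
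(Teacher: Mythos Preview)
Your proof is correct and follows essentially the same approach as the paper. Both arguments pick a point on the ray $c_{p,\xi}$ inside $B(q,r)$ and exploit that the Busemann value of such a point from $p$ equals its distance from $p$; you then finish via the cocycle identity $\beta_\xi(p,q)=\beta_\xi(p,z)+\beta_\xi(z,q)$, whereas the paper phrases the same step geometrically by intersecting $c_{p,\xi}$ with the horosphere $H_\xi(q)$, but the underlying computation is identical.
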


\begin{proof}
	$\beta_\xi(p,q)\leq d(p,q)$ follows the triangle inequality.

	For the other inequality, pick $x\in c_{p,\xi}\cap B(q,r)$. Let $y=c_{p,\xi}\cap H_{\xi}(q)$. As shown in Figure~\ref{fig2} we have $d(x,y)\leq d(x,q)<r$, so $d(q,y)\leq d(q,x)+d(x,y)<2r$.

	\begin{figure}[htbp]
		\centering
		\scalebox{0.7}{
			\begin{tikzpicture}
				\draw (0,0) circle (4);
				\coordinate [label=right:$\xi$] (Z) at (4,0);
				\node at (Z) [circle,fill,inner sep=1pt]{};
				\draw [dashed] (4,4) to (4,-4);
				\node at (4.5,-2.7) {$\scriptstyle\widetilde{M}(\infty)$};
				\draw[red] (1.8,0) circle (2.2);
				\draw[blue] (1.5,0) circle(2.5);
				\node at (0,-4.3) {$\scriptstyle H_\xi(p)$};
				\node [blue] at (0.5,-2.7) {$\scriptstyle H_\xi(q)$};
				\node [red] at(1.5,1.8) {$\scriptstyle H_\xi(x)$};
				\coordinate [label=above:$p$] (P) at (-2,-3.47);
				\draw[thick, ->] (P) to [in=180, out=60](Z);
				\draw[thick] (P) to [out=240, in=60] (-2.3,-4);
				\node at (P) [circle, fill, inner sep=1pt] {};
				\coordinate [label=below:$\scriptstyle y$] (Y) at (-0.45,-1.55);
				\node at (Y) [blue, circle, fill, inner sep=1pt] {};
				\coordinate [label=below:$\scriptstyle x$] (X) at (-0.05,-1.2);
				\node at (X) [red,circle, fill, inner sep=1pt] {};
				\node at (-1,-3) {$c_{p,\xi}$};
				\coordinate [label=right:$q$] (Q) at (-0.75,-1.1);
				\node at (Q) [blue, circle, fill, inner sep=1pt] {};
				\draw[black] (Q) circle [x radius=1cm, y radius=0.3cm];
				\draw[fill, green, opacity=0.3] (Q) circle [x radius=1cm, y radius=0.3cm];
				\node at (-2.3,-1) {$\scriptstyle B(q,r)$};
			\end{tikzpicture}
		}
		\caption{Bound for the Busemann Function.}\label{fig2}
	\end{figure}
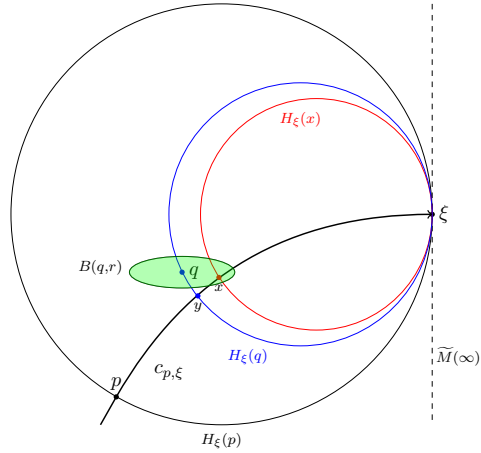

	Thus, we have
	\begin{displaymath}
		\beta_{\xi}(p,q)=\beta_{\xi}(p,y)=d(p,y)\geq d(p,q)-d(q,y)\geq d(p,q)-2r.
	\end{displaymath}
\end{proof}

Next we will show a very intuitive proposition of the shadow, that is to say, larger balls have larger shadow on the ideal boundary.

\begin{proposition}\label{prop_4_3}
	Assume $\widetilde{M}$ is a simply connected and visibility manifold with no conjugate points. For any $p\in\widetilde{M}$ and any $\epsilon>0$, there is an $R=R(p,\epsilon)>0$, such that for any $r>R$ and $q\in\widetilde{M}-B(p,r)$, the diameter of $\widetilde{M}(\infty)-\text{pr}_q(B(p,r))$ is less than $\epsilon$ with respect to the distance $d_p$ defined on $\widetilde{M}(\infty)$.
\end{proposition}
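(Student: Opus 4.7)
The plan has two steps: first, apply the uniform visibility axiom at $p$ to confine $\widetilde{M}(\infty)\setminus\text{pr}_q(B(p,r))$ to a narrow angular cone at $p$; second, show that any such cone has small $d_p$-diameter via a compactness/contradiction argument.

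\textbf{Step 1 (angular localization).} If $\xi\in\widetilde{M}(\infty)\setminus\text{pr}_q(B(p,r))$, then the geodesic $c_{q,\xi}$ avoids $B(p,r)$, so $d(p,c_{q,\xi})\geq r$. Applying the uniform visibility axiom at $p$ to $c_{q,\xi}$ yields $\measuredangle_p(q,\xi)\leq \epsilon_1$ whenever $r\geq R(p,\epsilon_1)$. Hence the complement lies in the cone $C_{\epsilon_1}(\bm{v}_{pq})\subset\widetilde{M}(\infty)$, where $\bm{v}_{pq}=c'_{p,q}(0)$, and any two $\xi,\eta$ in it satisfy $\measuredangle_p(\xi,\eta)\leq 2\epsilon_1$.

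\textbf{Step 2 (from angle to visual distance).} The key claim is: for every $\epsilon>0$ there exists $\delta>0$ such that $\measuredangle_p(\xi,\eta)<\delta$ implies $d_p(\xi,\eta)<\epsilon$, equivalently $\beta_p(\xi,\eta)>-2\log\epsilon$. I argue by contradiction. Supposing the claim fails, one obtains sequences $\xi_n,\eta_n\in\widetilde{M}(\infty)$ with $\measuredangle_p(\xi_n,\eta_n)\to 0$ and $\beta_p(\xi_n,\eta_n)\leq M:=-2\log\epsilon$. By compactness of $\widetilde{M}(\infty)$ under the cone topology, passing to a subsequence yields $\xi_n,\eta_n\to\zeta$ for a common point $\zeta$. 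Let $x_n\in c_{\xi_n,\eta_n}$ be the closest point to $p$ and set $D_n:=d(p,x_n)$. An application of Proposition~\ref{prop_2_7}(1) to the positively asymptotic pair consisting of $c_{p,\xi_n}$ and the $\xi_n$-half of $c_{\xi_n,\eta_n}$ (and symmetrically on the $\eta_n$-side), combined with the triangle inequality inside the definitions of $\beta_{\xi_n}(p,x_n)$ and $\beta_{\eta_n}(p,x_n)$, yields the quantitative comparison forcing $D_n$ to stay bounded; otherwise the Busemann functions would drive $\beta_p(\xi_n,\eta_n)\to+\infty$, contradicting $\beta_p\leq M$. Extracting a further subsequence by compactness, $x_n\to x\in\widetilde{M}$ and $\bm{v}_n:=c'_{\xi_n,\eta_n}(0)\to\bm{v}$. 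Continuity of $\Psi$ (Proposition~\ref{prop_2_7}(2)) then produces a limit geodesic $c_{\bm{v}}$ with $c_{\bm{v}}(\pm\infty)=\zeta$. But Proposition~\ref{prop_2_7}(1) applied to the two positive halves of $c_{\bm{v}}$ (both positively asymptotic to $\zeta$ and coinciding at $t=0$) forces $2t=d(c_{\bm{v}}(t),c_{\bm{v}}(-t))\leq 2R(\pi/2)$ for all $t\geq 0$, a contradiction.

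\textbf{Main obstacle and conclusion.} The most delicate piece is the quantitative step within Step 2: deriving from $\beta_p(\xi_n,\eta_n)\leq M$ that $D_n$ remains bounded. This is the converse direction of the visibility axiom and leans on Proposition~\ref{prop_2_7}(1) to compare $\beta_{\xi_n}(p,x_n),\beta_{\eta_n}(p,x_n)$ with $D_n$; the rest of Step 2 is a soft compactness/continuity argument. Once Step 2 is in hand, for given $\epsilon>0$ pick $\delta$ from Step 2 and define $R(p,\epsilon):=R(p,\delta/2)$; for any $r>R$ and $q\notin B(p,r)$, applying Step 1 with $\epsilon_1=\delta/2$ ensures every pair $\xi,\eta$ in the complement satisfies $\measuredangle_p(\xi,\eta)\leq \delta$, whence $d_p(\xi,\eta)<\epsilon$, as required.
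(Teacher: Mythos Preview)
Your two-step decomposition is a reasonable strategy, and Step~1 is correct. However, Step~2 contains a genuine gap at precisely the point you flag as ``most delicate'': the implication that $\beta_p(\xi_n,\eta_n)\leq M$ forces $D_n=d(p,x_n)$ to remain bounded. Proposition~\ref{prop_2_7}(1) does not deliver this. Applying it to $c_{p,\xi_n}$ and the $\xi_n$-ray of $c_{\xi_n,\eta_n}$ (with base points $p$ and $x_n$) gives only
\[
d\bigl(c_{p,\xi_n}(t),c_{x_n,\xi_n}(t)\bigr)\leq 2R(\pi/2)+3D_n,
\]
a bound that scales with $D_n$ itself; feeding this into the Busemann definition yields merely $\beta_{\xi_n}(p,x_n)\geq -2R(\pi/2)-3D_n$, which places no constraint on $D_n$ from above. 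What your argument actually requires is an inequality of the form $\beta_p(\xi_n,\eta_n)\geq cD_n-C$ with constants independent of $n$ --- essentially a thin-triangles/Gromov-product comparison --- and this is not a consequence of Proposition~\ref{prop_2_7}(1) alone in the bare visibility setting. Without it, the case $D_n\to\infty$ is not excluded, and your compactness extraction of a limit geodesic never gets off the ground.

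The paper proceeds differently and avoids this obstacle entirely. It runs a single contradiction argument directly on the statement: one observes that $c_{p,q_n}(+\infty)$ itself always lies in $\widetilde{M}(\infty)\setminus\text{pr}_{q_n}(B(p,r_n))$, so if the $d_p$-diameter of this set is at least $\epsilon$ one can pick $\xi_n$ in the complement with $d_p(\xi_n,c_{p,q_n}(+\infty))\geq\delta>0$. Passing to subsequential limits $\xi_n\to\xi$ and $q_n\to\eta$ in the cone topology, visibility forces $\measuredangle_p(q_n,\xi_n)\to 0$ and hence $\measuredangle_p(\xi,\eta)=0$, i.e.\ $\xi=\eta$; on the other hand the $d_p$-separation forces $\xi\neq\eta$. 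This argument never needs to locate or control the connecting geodesics $c_{\xi_n,\eta_n}$, and in particular never needs the quantitative angle-to-$d_p$ comparison you are attempting to prove.
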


\begin{proof}
	Assume the contrary. We can find $p\in\widetilde{M}$ and $\delta>0$, with ${\{q_n\}}_{n=1}^\infty\subset\widetilde{M}$ and ${\{r_n\}}_{n=1}^\infty\subset\mathbb{R}^+$, such that $\lim\limits_{n\to\infty}r_n=+\infty$, $d(p,q_n)>r_n$ and there are $\xi_n\in\widetilde{M}-\text{pr}_{q_n}(B(p,r_n))$ such that $d_p(\xi_n,c_{p,q_n}(+\infty))\geq\delta>0$.

	By the compactness of the ideal boundary $\widetilde{M}(\infty)$, we can assume $\xi_n\to\xi$ is a converging sequence. Also, by the compactness of $\overline{\widetilde{M}}=\widetilde{M}\cup\widetilde{M}(\infty)\approx \mathbb{B}^{\text{dim}\widetilde{M}}$ under the cone topology, by choose a sub-sequence if needed, we can also assume that $q_n\to\eta\in\widetilde{M}(\infty)$ is a converging sequence as well, leading to the angle $\measuredangle_p(q_n,\eta)\to 0$.

	Now let $\bm{v}_n=c'_{p,q_n}(0)$ and $\bm{v}=c'_{p,q}(0)$ be the initial vector of the geodesics. From Proposition~\ref{prop_2_7}, we have the continuity of the terminal point of the geodesics as
	\begin{displaymath}
		\eta=c_{\bm{v}}(+\infty)=\lim_{n\to\infty}c_{\bm{v}_n}(+\infty).
	\end{displaymath}

	By the assumption, we have $d_p(\xi,\eta)\geq\delta>0$, thus $\xi\neq\eta$.

	But on the other hand, $\xi_n\in\widetilde{M}(\infty)-\text{pr}_{q_n}(B(p,r_n))$, we know the geodesic rays $c_{q_n,\xi_n}$ does not intersect with $B(p,r_n)$. Let $\theta_n=\measuredangle_p(q_n,\xi_n)$ denote the angle between $c_{p,q_n}$ and $c_{p,\xi_n}$. Because $\widetilde{M}$ is a visibility manifold, and $r_n$ diverges to $+\infty$, this angle $\theta_n\to 0$ as $n\to\infty$. Without loss of generality, by choosing sub-sequences of $q_n\to\eta$ and $\xi_n\to\xi$, we can assume $\theta_n<\frac 1n$, $\measuredangle_p(\xi_n,\xi)<\frac 1n$ and $\measuredangle_p(q_n,\eta)<\frac 1n$.

	\begin{figure}[htbp]
		\centering
		\scalebox{0.7}{
			\begin{tikzpicture}
				\coordinate[label=above:$p$](P) at (0,0);
				\node at (P) [circle, fill, inner sep=1pt] {};
				\draw (0,0) circle (4);
				\coordinate (T1) at (4,0);
				\coordinate (T2) at (-3.464,-2);
				\draw[red, thick] (T1) arc (0:210:4);
				\coordinate[label=below left:$\xi_n$] (X) at (-2,-3.464);
				\node at (X) [circle, fill, inner sep=1pt] {};
				\coordinate[label=below:$\xi$] (Y) at (-1,-3.87);
				\node at (Y) [circle, fill, inner sep=1pt] {};
				\coordinate[label=below right:$\eta$] (Z) at (0.5,-3.97);
				\node at (Z) [circle, fill, inner sep=1pt] {};
				\coordinate[label=right:$q_n$] (Q) at (0.5,-2);
				\node at (Q) [circle, fill, inner sep=1pt] {};
				\draw [black, thick] (P) to (X);
				\draw [black, thick] (P) to (Q);
				\draw [blue] (-0.22,-0.373) arc (220:290:0.3);
				\node at (-0,-0.6) {$\scriptstyle\theta_n$};
				\draw [blue] (Q) to [in=50, out=150] (X);
				\draw [red] (Q) to [in=190, out=71] (T1);
				\draw [red] (Q) to [in=40, out=140] (T2);
				\draw [blue, fill, opacity=0.3] (P) circle (1.5);
				\draw [black] (P) circle (1.5);
				\node at (0,1.8) {$B(p,r_n)$};
				\node [red] at (5,2) {$\text{pr}_{q_n}B(p,r_n)$};
				\draw [thick, blue, dashed, ->] (X) to [in=170, out=-30] (Y);
				\draw [thick, blue, dashed, ->] (Q) to (Z);
			\end{tikzpicture}
		}
		\caption{Shadow of the Ball.}\label{fig3}
	\end{figure}
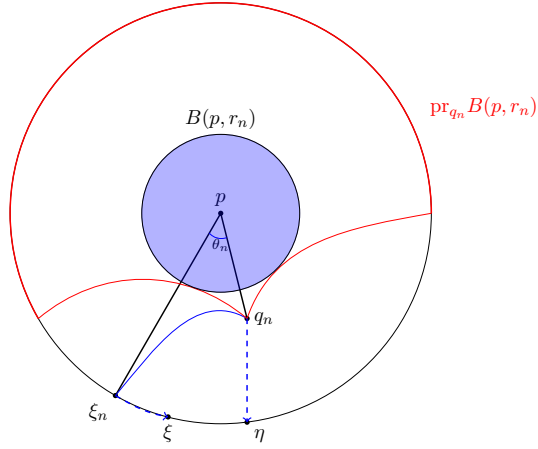

	Hence, we have
	\begin{displaymath}
		\measuredangle_p(\xi,\eta)\leq\measuredangle_p(\xi,\xi_n)+\measuredangle_p(\xi_n,q_n)+\measuredangle_p(q_n,\eta)<\frac 3n\to0.
	\end{displaymath}

	This suggests $\xi=\eta$, a contradiction.
\end{proof}

Now we can extend some important results called Shadow Lemma on manifolds with negative curvature to our setting. The first one is the Sullivan Shadow Lemma stated as the following:

\begin{theorem}[Sullivan Shadow Lemma]\label{thm_4_4}
	Let $\widetilde{M}$ be a complete simply connected and visibility manifold with no conjugate points, and $\Gamma$ be a non-elementary discrete group of $\text{Iso}(\widetilde{M})$. Suppose ${\{\mu_q\}}_{q\in\widetilde{M}}$ is the Patterson-Sullivan measure and $p\in\widetilde{M}$ is an arbitrarily chosen point, then there is some $R_0=R_0(p)>0$, such that for any $r\geq R_0$, there is $C=C(r)>1$ with the following inequality holds:
	\begin{displaymath}
		\frac 1C\leq \frac{\mu_p(\text{pr}_p(B(\alpha p,r)))}{\mathrm{e}^{-\delta_\Gamma d(p,\alpha p)}}\leq C,\qquad\forall\alpha\in\Gamma.
	\end{displaymath}
\end{theorem}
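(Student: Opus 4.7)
The overall strategy is to transfer the shadow at $\alpha p$ to a shadow at $p$ by using the $\Gamma$-equivariance of the Patterson--Sullivan density, rewrite the resulting measure via the conformal density formula as an integral against $\mu_p$ whose Radon--Nikodym derivative is controlled by the Busemann function, and then sandwich $\beta_\xi$ using Lemma~\ref{lem_4_2}. The first step is to observe that since $\alpha$ is an isometry sending geodesics through $p$ to geodesics through $\alpha p$, one has $\alpha^{-1}\,\text{pr}_p(B(\alpha p,r))=\text{pr}_{\alpha^{-1}p}(B(p,r))$, so by $\Gamma$-equivariance
\[
\mu_p\bigl(\text{pr}_p(B(\alpha p,r))\bigr)=\mu_{\alpha^{-1}p}\bigl(\text{pr}_{\alpha^{-1}p}(B(p,r))\bigr)=\int_{\text{pr}_{\alpha^{-1}p}(B(p,r))}\mathrm{e}^{-\delta_\Gamma\beta_\xi(\alpha^{-1}p,p)}\,\mathrm{d}\mu_p(\xi),
\]
using the conformal property of $\{\mu_q\}$. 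For every $\xi$ in this shadow, Lemma~\ref{lem_4_2} (with base points $\alpha^{-1}p$ and $p$, which are at distance $d(p,\alpha p)$ since $\alpha$ is an isometry) yields $d(p,\alpha p)-2r\le\beta_\xi(\alpha^{-1}p,p)\le d(p,\alpha p)$, so the integrand is pinched between $\mathrm{e}^{-\delta_\Gamma d(p,\alpha p)}$ and $\mathrm{e}^{2\delta_\Gamma r}\mathrm{e}^{-\delta_\Gamma d(p,\alpha p)}$.

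Factoring out $\mathrm{e}^{-\delta_\Gamma d(p,\alpha p)}$, the lemma then reduces to producing two-sided uniform bounds on $\mu_p\bigl(\text{pr}_{\alpha^{-1}p}(B(p,r))\bigr)$, independent of $\alpha\in\Gamma$. The upper bound is immediate: this mass is at most the total mass $\mu_p(\widetilde{M}(\infty))$, giving the right-hand inequality with $C=\mu_p(\widetilde{M}(\infty))\,\mathrm{e}^{2\delta_\Gamma r}$.

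The genuine obstacle is the uniform \emph{lower} bound on $\mu_p\bigl(\text{pr}_{\alpha^{-1}p}(B(p,r))\bigr)$; this is where non-elementarity of $\Gamma$ and the visibility geometry enter. The plan is to choose three distinct points $\xi_1,\xi_2,\xi_3\in L(\Gamma)$ (available since $\#L(\Gamma)=\infty$) and set $\epsilon_0=\tfrac13\min_{i\ne j}d_p(\xi_i,\xi_j)>0$, then take $U_i=B_{d_p}(\xi_i,\epsilon_0/2)$. Because $\text{supp}(\mu_p)\subset L(\Gamma)$ and each $\xi_i\in L(\Gamma)$, each $U_i$ has positive $\mu_p$-mass, so $m_0:=\min_i\mu_p(U_i)>0$. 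Apply Proposition~\ref{prop_4_3} to obtain $R_0=R(p,\epsilon_0)$; for any $r\ge R_0$ and any $\alpha\in\Gamma$ with $\alpha^{-1}p\notin B(p,r)$, the complement $\widetilde{M}(\infty)\setminus\text{pr}_{\alpha^{-1}p}(B(p,r))$ has $d_p$-diameter less than $\epsilon_0$ and therefore meets at most one of the $U_i$; hence the shadow contains at least two of the $U_i$ and has mass $\ge 2m_0$.

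Finally, because $\Gamma$ acts properly discontinuously, only finitely many $\alpha\in\Gamma$ satisfy $d(p,\alpha p)\le r$, so the bound for those $\alpha$ is absorbed into $C$ by enlarging the constant (each such $\mu_p(\text{pr}_p(B(\alpha p,r)))$ is bounded below by some positive number since the shadow is an open set meeting $L(\Gamma)$, or we simply enlarge $C$ to accommodate the finite exceptional set). Combining this lower bound $2m_0\cdot\mathrm{e}^{-\delta_\Gamma d(p,\alpha p)}$ with the upper bound completes the two-sided estimate, and the main delicate point throughout is the use of Proposition~\ref{prop_4_3}, which crucially relies on the uniform visibility axiom.
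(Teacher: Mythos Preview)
Your overall structure matches the paper's: transport via $\Gamma$-equivariance, rewrite with the conformal density, control $\beta_\xi$ by Lemma~\ref{lem_4_2}, and then reduce to a uniform lower bound on $\mu_p(\text{pr}_{\alpha^{-1}p}(B(p,r)))$ using Proposition~\ref{prop_4_3}. The upper bound and the Busemann sandwich are identical to the paper.

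There is, however, a genuine gap in your lower-bound step. You choose $\xi_1,\xi_2,\xi_3\in L(\Gamma)$ and then assert ``because $\text{supp}(\mu_p)\subset L(\Gamma)$ and each $\xi_i\in L(\Gamma)$, each $U_i$ has positive $\mu_p$-mass.'' That inference is backwards: from $\text{supp}(\mu_p)\subset L(\Gamma)$ you cannot conclude that an arbitrary limit point lies in the support. What you need is either (i) to pick the $\xi_i$ directly in $\text{supp}(\mu_p)$ and argue separately that the support has at least two (or three) points---which requires showing that a $\Gamma$-invariant closed set with $\le 2$ points forces $\Gamma$ to be elementary---or (ii) to prove $\text{supp}(\mu_p)=L(\Gamma)$ via minimality of the $\Gamma$-action on $L(\Gamma)$. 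Neither is done, and the paper does not supply the equality $\text{supp}(\mu_p)=L(\Gamma)$ at this point. The same unjustified claim reappears when you handle the finitely many $\alpha$ with $d(p,\alpha p)\le r$.

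The paper avoids this issue entirely by a different, more self-contained trick: it first notes $\mu_p$ is not a single atom (since a single atom would be $\Gamma$-fixed), lets $\eta$ be any point of maximal atomic mass (possibly zero), sets $\lambda=\tfrac12(\mu_p(\{\eta\})+\mu_p(\widetilde{M}(\infty)))$, and shows by a compactness-and-contradiction argument that there is an $\epsilon>0$ for which \emph{every} $d_p$-ball of radius $\epsilon$ has mass at most $\lambda$. Then Proposition~\ref{prop_4_3} puts the complement of the shadow inside such a ball, giving $\mu_p(\text{pr}_{\alpha^{-1}p}(B(p,r)))\ge \mu_p(\widetilde{M}(\infty))-\lambda>0$. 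This bypasses any question about which boundary points lie in the support. Also, a small correction: Proposition~\ref{prop_4_3} as stated and used here needs only visibility, not uniform visibility; the constant $R$ already depends on $p$.
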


\begin{proof}
	Given $\Gamma$ being non-elementary, $\mu_p$ is not a single atom measure. Otherwise, the support of $\mu_p$, say $\xi$, is fixed under $\Gamma$, contradicting to that $\Gamma$ should be non-elementary and the support of $\mu_p$ should be $L(\Gamma)$, which is an infinity set.

	Suppose $\mu_p$ has positive measure on some points, choose $\eta\in\widetilde{M}(\infty)$ with the maximal measure of $\mu_p$ among these points and let $\lambda=\frac 12(\mu_p(\eta)+\mu_p(\widetilde{M}(\infty)))$. Clearly $\mu_p(\eta)<\lambda<\mu_p(\widetilde{M}(\infty))$.

	\textbf{Claim}: there is an $\epsilon=\epsilon(p)>0$ depending on $p$ only, such that for any $\epsilon$-ball $B(\epsilon)$ under $d_p$ on the ideal boundary, we have
	\begin{displaymath}
		\mu_p(B(\epsilon))\leq\lambda<\mu_p(\widetilde{M}(\infty)).
	\end{displaymath}

	Assume the contrary, if we can not find such $\epsilon$, then there exist ${\{\epsilon_n\}}_{n=1}^\infty\subset\mathbb{R}^+$ where $\epsilon<\frac 1n$ with some $\epsilon_n$-ball $B(\epsilon_n)$ centered at $\xi_n\in\widetilde{M}(\infty)$ whose measure $\mu_p(B(\epsilon_n))$ is greater than $\lambda$.

	By the compactness of the ideal boundary, we can assume $\xi_n\to\xi$ is a converging sequence. As $\epsilon_n\to0$, we have that
	\begin{displaymath}
		\mu_p(\xi)\geq\lambda>\mu_p(\eta),
	\end{displaymath}
	contradicting to the choice of $\eta$.

	Therefore, we can choose this $\epsilon$ as in the claim. By Proposition~\ref{prop_4_3}, we can find $R=R(p,\epsilon)$, such that for any $r\geq R$ and $d(q,p)>r$, we have
	\begin{displaymath}
		\widetilde{M}(\infty)-\text{pr}_q(B(p,r))\subset B(C_{p,q}(+\infty),\epsilon).
	\end{displaymath}

	Because $\Gamma$ is discrete, except for at most finitely many elements in $\Gamma$, we have
	\begin{displaymath}
		d(p,\alpha p)>R,~\quad \alpha\in\Gamma.
	\end{displaymath}

	For a given element $\alpha\in\Gamma$, choose $r$ between $R$ and $d(p,\alpha p)$ and let $\delta_r=\mu_p(\text{pr}_{\alpha p}B(p,r))$, we can compute that
	\begin{displaymath}
		\begin{aligned}
			\delta_r & =\mu_p(\widetilde{M}(\infty))-(\mu_p(\widetilde{M}(\infty))-\delta_r)                   \\
			         & =\mu_p(\widetilde{M}(\infty))-\mu_p(\widetilde{M}(\infty)-\text{pr}_{\alpha p}(B(p,r))) \\
			         & \geq \mu_p(\widetilde{M}(\infty))-\lambda                                               \\
			         & =\frac 12(\mu_p(\widetilde{M})(\infty)-\mu_p(\eta))                                     \\
			         & >0.
		\end{aligned}
	\end{displaymath}

	We also have for the measure in the PS measure,
	\begin{displaymath}
		\begin{aligned}
			\mu_p(\text{pr}_p B(\alpha p,r)) & =\mu_{\alpha^{-1}p}(\text{pr}_{\alpha^{-1}p}B(p,r))                                                                   \\
			                                 & =\int_{\text{pr}_{\alpha^{-1}p}B(p,r)}\,\mathrm{d}\mu_{\alpha^{-1}p}(\xi)                                             \\
			                                 & =\int_{\text{pr}_{\alpha^{-1}p}B(p,r)}\mathrm{e}^{-\delta_\Gamma\beta_{\xi}(\alpha^{-1}p,p)}\,\mathrm{d}\mu_{p}(\xi). \\
		\end{aligned}
	\end{displaymath}

	By Lemma~\ref{lem_4_2}, as $\alpha$ is an isometry, we have
	\begin{displaymath}
		d(p,\alpha p)-2r\leq\beta_{\xi}(\alpha^{-1}p,p)\leq d(p,\alpha p).
	\end{displaymath}

	By replacing the $\beta_{\xi}$ part in the integral, we can conclude that
	\begin{displaymath}
		\mu_p(\text{pr}_p B(\alpha p,r))\mathrm{e}^{-\delta_\Gamma d(p,\alpha p)}\leq \mu_p(\text{pr}_p B(\alpha p,r))\leq\mu_p(\widetilde{M}(\infty))\mathrm{e}^{2\delta_\Gamma r-\delta_\Gamma d(p,\alpha p)}.
	\end{displaymath}

	Let $C=\max\{\frac{2}{\mu_p(\widetilde{M}(\infty))-\mu_p(\eta)},\mathrm{e}^{2\delta_\Gamma r}\mu_p(\widetilde{M}(\infty))\}$ be the constant we need, and this gives the required inequality.
\end{proof}

Similarly, we can prove the following lemma for compact subset of $\widetilde{M}$ which was originally proven on manifolds with negative curvature:

\begin{theorem}[Mohsen Shadow Lemma\cite{PPS}]\label{thm_4_5}
	Let $\widetilde{M}$ be a complete simply connected and visibility manifold with no conjugate points and $\Gamma$ is a non-elementary discrete group of $\text{Iso}(\widetilde{M})$. Suppose ${\{\mu_q\}}_{q\in\widetilde{M}}$ is the Patterson-Sullivan measure and $K\subset\widetilde{M}$ is a compact set.
	For $R>0$ large enough, there is $C>0$, such that for any $\alpha\in\Gamma$, and any pair of points $p,q\in K$, we have
	\begin{displaymath}
		\frac 1C\leq \frac{\mu_p(\text{pr}_p(B(\alpha q,R)))}{\mathrm{e}^{-\delta_\Gamma d(p,\alpha q)}}\leq C.
	\end{displaymath}
\end{theorem}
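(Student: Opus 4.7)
The plan is to reduce Theorem~\ref{thm_4_5} to the Sullivan Shadow Lemma (Theorem~\ref{thm_4_4}) by fixing a single reference point $p_0 \in K$ and absorbing the variation of $(p,q) \in K\times K$ into bounded multiplicative constants. Set $D = \operatorname{diam}(K)$ and $D_1 = 2R(\pi/2) + 3D$, where $R(\pi/2)$ denotes the uniform visibility constant.

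The key geometric step is to control how a shadow changes when the observer moves inside $K$. For any $\xi \in \widetilde{M}(\infty)$, the geodesic rays $c_{p,\xi}$ and $c_{p_0,\xi}$ are positively asymptotic, so Proposition~\ref{prop_2_7}~(1) gives
\[
    d(c_{p,\xi}(t), c_{p_0,\xi}(t)) \leq 2R(\pi/2) + 3d(p,p_0) \leq D_1, \qquad \forall\, t \geq 0.
\]
Hence, if $z = c_{p,\xi}(t) \in B(y,r)$ then $c_{p_0,\xi}(t) \in B(y, r+D_1)$, yielding $\text{pr}_p(B(y,r)) \subset \text{pr}_{p_0}(B(y, r+D_1))$; by symmetry, $\text{pr}_{p_0}(B(y, r-D_1)) \subset \text{pr}_p(B(y,r))$ whenever $r \geq D_1$. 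Combining this with the elementary ball sandwich $B(\alpha p_0, r-D) \subset B(\alpha q, r) \subset B(\alpha p_0, r+D)$ (valid because $d(\alpha p_0, \alpha q) = d(p_0, q) \leq D$), we obtain, for $R$ sufficiently large,
\[
    \text{pr}_{p_0}(B(\alpha p_0, R-D-D_1)) \subset \text{pr}_p(B(\alpha q, R)) \subset \text{pr}_{p_0}(B(\alpha p_0, R+D+D_1)).
\]

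Next, the conformal property of the PS measure combined with $|\beta_\xi(p,p_0)| \leq d(p,p_0) \leq D$ gives $e^{-\delta_\Gamma D}\mu_{p_0}(A) \leq \mu_p(A) \leq e^{\delta_\Gamma D}\mu_{p_0}(A)$ for every Borel $A \subset \widetilde{M}(\infty)$. Applying Theorem~\ref{thm_4_4} at the fixed base point $p_0$ with the two radii $R \pm (D + D_1)$ then controls both ends of the sandwich by quantities of the form $C_1^{\pm 1} e^{-\delta_\Gamma d(p_0, \alpha p_0)}$. Finally, $|d(p_0, \alpha p_0) - d(p, \alpha q)| \leq d(p_0, p) + d(\alpha p_0, \alpha q) \leq 2D$ allows us to replace $e^{-\delta_\Gamma d(p_0, \alpha p_0)}$ by $e^{-\delta_\Gamma d(p, \alpha q)}$ at the cost of a factor $e^{\pm 2\delta_\Gamma D}$. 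Collecting all bounded multiplicative errors into a single constant $C$, and requiring $R$ to exceed $R_0(p_0) + D + D_1$ so that all invocations of Theorem~\ref{thm_4_4} are legal, delivers the claimed two-sided estimate uniformly in $p, q \in K$ and $\alpha \in \Gamma$.

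The only delicate step is the shadow containment obtained by moving the observer, which relies crucially on the quantitative asymptotic bound of Proposition~\ref{prop_2_7}~(1). This is precisely where the uniform visibility and no-conjugate-point hypotheses enter the argument: in their absence, asymptotic geodesics need not remain at bounded distance, and the reduction to a fixed reference point would break down. Once this inclusion is established, the remainder of the proof is a soft assembly involving compactness of $K$, the conformal transformation rule, and the triangle inequality.
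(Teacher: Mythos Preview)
Your reduction is correct, but the strategy differs from the paper's. The paper does \emph{not} reduce to Theorem~\ref{thm_4_4}; instead it writes
\[
\mu_p\bigl(\text{pr}_p(B(\alpha q,R))\bigr)=\int_{\text{pr}_p B(\alpha q,R)} e^{-\delta_\Gamma\beta_\xi(p,\alpha q)}\,\mathrm{d}\mu_{\alpha q}(\xi),
\]
uses Lemma~\ref{lem_4_2} to replace $\beta_\xi(p,\alpha q)$ by $d(p,\alpha q)$ up to $2R$, and then proves directly that $\mu_{\alpha q}\bigl(\text{pr}_p B(\alpha q,R)\bigr)$ is bounded above and below uniformly in $p,q\in K$ and $\alpha\in\Gamma$. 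The upper bound is immediate from $\Gamma$-equivariance; the lower bound is obtained by a contradiction argument using Proposition~\ref{prop_4_3}, showing that otherwise some $\mu_q$ would be a single atom.

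Your approach---sandwiching $\text{pr}_p(B(\alpha q,R))$ between two $p_0$-shadows of balls about $\alpha p_0$ and invoking Theorem~\ref{thm_4_4}---is more constructive and avoids the contradiction argument entirely. The price is a genuine strengthening of hypotheses: your observer-change inclusion rests on Proposition~\ref{prop_2_7}~(1), which is stated for \emph{uniform} visibility manifolds, whereas Theorem~\ref{thm_4_5} (and the paper's proof of it) requires only the visibility axiom via Proposition~\ref{prop_4_3}. In the paper's global setting this costs nothing, since uniform visibility is assumed throughout, but as a self-contained proof of the theorem as stated your argument is formally less general. Conversely, your route gives an explicit threshold $R\ge R_0(p_0)+D+D_1$ and an explicit constant, which the paper's compactness-and-contradiction argument does not.
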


\begin{proof}
	For the numerator above, we have
	\begin{equation}\label{equ_4_1}
		\mu_p(\text{pr}_p(B(\alpha q,R)))=\int_{\text{pr}_p B(\alpha q,R)}\mathrm{e}^{-\delta_\Gamma\beta_\xi(p,\alpha q)}\,\mathrm{d}\mu_{\alpha q}(\xi).
	\end{equation}

	$\Gamma$ is discrete, thus except for at most finitely many elements, we can assume $\alpha\in\Gamma$ satisfies $d(p,\alpha q)>R$. By Lemma~\ref{lem_4_2}, for any $\xi\in\text{pr}_p B(\alpha q,R)$, we have
	\begin{displaymath}
		d(p,\alpha q)-2R\leq\beta_{\xi}(p,\alpha q)\leq d(p,\alpha q).
	\end{displaymath}

	By replacing the $\beta_{\xi}$ term in the Equation~\ref{equ_4_1}, we can conclude
	\begin{displaymath}
		\mathrm{e}^{-\delta_{\Gamma}d(p,\alpha q)}\mu_{\alpha q}(\text{pr}_p B(\alpha q,R))\leq\mu_p(\text{pr}_p B(\alpha q,R))\leq\mathrm{e}^{2\delta_\Gamma R-\delta_\Gamma d(p,\alpha q)}\mu_{\alpha q}(\text{pr}_p B(\alpha q,R)).
	\end{displaymath}

	Divided by the denominator, we only need to prove the following statement.

	\textbf{Claim}: For $R>0$ large enough, there is $C_1>0$, such that for any $\alpha\in\Gamma$ and any pairs of point $p,q\in K$, we have
	\begin{displaymath}
		\frac{1}{C_1}\leq\mu_{\alpha q}(\text{pr}_p B(\alpha q, R))\leq C_1,
	\end{displaymath}

	where $C=C_1\mathrm{e}^{2\delta_\Gamma R}$ is given in this claim.

	To prove this claim, let
	\begin{displaymath}
		A=\sup_{\substack{p,q\in K\\ \xi\in\widetilde{M}(\infty)}}\delta_\Gamma |\beta_{\xi}(p,q)|\leq\delta_\Gamma\cdot\text{diam}{K}<\infty.
	\end{displaymath}

	We have for any point $p\in K$, the upper bound in the claim holds as
	\begin{displaymath}
		\begin{aligned}
			\mu_{\alpha q}(\text{pr}_p B(\alpha q,R)) & \leq\mu_{\alpha q}(\widetilde{M}(\infty))                                                        \\
			                                          & =\mu_{q}(\alpha^{-1}\widetilde{M}(\infty))                                                       \\
			                                          & =\mu_{q}(\widetilde{M}(\infty))                                                                  \\
			                                          & =\int_{\widetilde{M}(\infty)}\mathrm{e}^{-\delta_{\Gamma}\beta_{\xi}(p,q)}\,\mathrm{d}\mu_p(\xi) \\
			                                          & \leq\mathrm{e}^{A}\mu_p(\widetilde{M}(\infty)).
		\end{aligned}
	\end{displaymath}

	For the lower bound, assume the contrary. We can find sequences of points $p_n$ and $q_n$ in $K$, with $\{\alpha_n\}\subset\Gamma$ and $R_n\to\infty$ correspondingly, such that
	\begin{displaymath}
		\lim_{n\to\infty} \mu_{\alpha_n q_n}(\text{pr}_{p_n} B(\alpha_n q_n, R_n))=0.
	\end{displaymath}

	By the compactness of $K$, passing to sub-sequences if needed, we can assume $p_n\to p$ and $q_n\to q$ are converging sequences. Because $\Gamma$ is discrete, under the cone topology, we can find a point $\xi$ on the ideal boundary such that,
	\begin{displaymath}
		\lim_{n\to\infty}\alpha_n^{-1}p=\xi\in\widetilde{M}(\infty).
	\end{displaymath}

	By Proposition~\ref{prop_4_3}, for any relatively compact open subset $V\subset\widetilde{M}(\infty)-{\xi}$ and $n$ sufficiently large, we have
	\begin{displaymath}
		V\subset \text{pr}_{\alpha_n^{-1}p_n}B(q_n,R_n).
	\end{displaymath}

	We can compute that
	\begin{equation}
		\begin{aligned}
			\mu_q(V) & =\mu_{\alpha_n q}(\alpha_n V)                                                                                                                  \\
			         & \leq \mu_{\alpha_n q}(\alpha_n \text{pr}_{\alpha_n^{-1}p_n} B(q_n,R_n))                                                                        \\
			         & =\mu_{\alpha_n q}(\text{pr}_{p_n} B(\alpha_n q_n, R_n))                                                                                        \\
			         & =\int_{\text{pr}_{p_n}B(\alpha_n q_n, R_n)}\mathrm{e}^{-\delta_\Gamma\beta_{\xi}(\alpha_n q, \alpha_n q_n)}\,\mathrm{d}\mu_{\alpha_n q_n}(\xi) \\
			         & =\int_{\text{pr}_{p_n}B(\alpha_n q_n, R_n)}\mathrm{e}^{-\delta_\Gamma\beta_{\alpha_n^{-1}\xi}(q, q_n)}\,\mathrm{d}\mu_{\alpha_n q_n}(\xi)      \\
			         & \leq\mathrm{e}^{A}\mu_{\alpha_n q_n}(\text{pr}_{p_n}B(\alpha_n q_n, R_n))
		\end{aligned}
	\end{equation}

	By assumption, the last line converges to $0$, this suggests $\mu_q(V)=0$ for arbitrary $V$ relatively compact. Therefore, $\text{supp}(\mu_q)=\{\xi\}$, $\mu_q$ is an atomic measure, contradicting to the condition that $\Gamma$ is non-elementary.

	Therefore, there is a positive lower bound. By choosing a suitable constant $C_1$, we have proven this lemma.
\end{proof}

\section{\bf Conical Limit Set}\label{sec5}
\begin{definition}
	We call a limit point $\xi\in\widetilde{M}(\infty)$ a conical limit point, if for any point $p\in\widetilde{M}$, there is $D=D(p)>0$ depending on $p$ only and $\{\alpha_n\}\subset\Gamma$, such that $d(\alpha_n p, c_{p,\xi})\leq D$ and $\alpha_n p\to\xi$. Denote the set of all conical limit points by $L_c(\Gamma)$. 
\end{definition}

As a remark, it is straightforward to check that given $\xi$, the choice of $p$ does not affect if it is a conical limit point or not (cf. Proposition~\ref{prop_2_7} (3)). That is to say, if we can find one point $p_0$ satisfying the above condition, then for any $p\in\widetilde{M}$, there is some $D=D(p)$ such that the condition holds with the same sequence of $\alpha_n$. Therefore, to check if a limit point $\xi$ is a conical limit point, we only need to check this at one point.

For conical limit points, we have the following proposition:

\begin{proposition}\label{prop_5_2}
	Fix a point $p_0\in\widetilde{M}$, if $\xi\in L_c(\Gamma)$, we can find $\{\alpha_n\}$, such that
	\begin{displaymath}
		\lim_{n\to\infty}\beta_{\xi}(p_0,\alpha_n p_0)=+\infty,
	\end{displaymath}
	but ${\{d(p_0,\alpha_n p_0)-\beta_{\xi}(p_0,\alpha_n p_0)\}}_{n=1}^\infty$ is bounded.
\end{proposition}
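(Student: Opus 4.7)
The plan is to use the same sequence $\{\alpha_n\}$ supplied by the definition of a conical limit point. By hypothesis, there exist $D = D(p_0) > 0$ and $\{\alpha_n\} \subset \Gamma$ with $\alpha_n p_0 \to \xi$ and $d(\alpha_n p_0, c_{p_0,\xi}) \leq D$. For each $n$, I would select a point $y_n = c_{p_0,\xi}(t_n)$ on the ray realizing this distance up to a small error, so that $d(\alpha_n p_0, y_n) \leq D + 1$.

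First I would verify that $t_n \to +\infty$. Since $\alpha_n p_0$ tends to an ideal point in the cone topology and $\Gamma$ acts properly discontinuously on $\widetilde{M}$, the displacements $d(p_0, \alpha_n p_0)$ tend to infinity. The triangle inequality then gives $t_n = d(p_0, y_n) \geq d(p_0, \alpha_n p_0) - (D+1) \to \infty$, and symmetrically $|d(p_0, \alpha_n p_0) - t_n| \leq D + 1$.

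The next key observation is that $\beta_\xi(p_0, y_n) = t_n$, which is immediate from the definition of the Busemann function since $y_n$ lies on the ray from $p_0$ toward $\xi$ at arclength $t_n$. Invoking the Busemann cocycle $\beta_\xi(p_0, \alpha_n p_0) = \beta_\xi(p_0, y_n) + \beta_\xi(y_n, \alpha_n p_0)$ together with the built-in Lipschitz bound $|\beta_\xi(y_n, \alpha_n p_0)| \leq d(y_n, \alpha_n p_0) \leq D + 1$ noted just after the definition of the Busemann function in the paper, I obtain $|\beta_\xi(p_0, \alpha_n p_0) - t_n| \leq D + 1$. In particular $\beta_\xi(p_0, \alpha_n p_0) \to +\infty$, and combining with the earlier inequality yields $|d(p_0, \alpha_n p_0) - \beta_\xi(p_0, \alpha_n p_0)| \leq 2(D+1)$, which is the desired bound. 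The statement is essentially a translation of the geometric fact that $\{\alpha_n p_0\}$ remains in a tubular neighborhood of the ray $c_{p_0,\xi}$ into the language of Busemann functions, so no step presents a genuine obstacle.
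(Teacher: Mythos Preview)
Your argument is correct and follows essentially the same approach as the paper: both use the sequence $\{\alpha_n\}$ from the definition of conical limit point and bound $\beta_\xi(p_0,\alpha_n p_0)$ via the proximity of $\alpha_n p_0$ to the ray $c_{p_0,\xi}$. The only cosmetic difference is that the paper observes $\xi\in\text{pr}_{p_0}B(\alpha_n p_0,D)$ and then quotes Lemma~\ref{lem_4_2} to obtain $d(p_0,\alpha_n p_0)-2D\leq\beta_\xi(p_0,\alpha_n p_0)\leq d(p_0,\alpha_n p_0)$ in one stroke, whereas you unpack that estimate by hand using the cocycle identity and the $1$-Lipschitz bound on Busemann functions.
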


\begin{proof}
	$\xi$ is a conical limit point, we can find a number $D=D(p_0)$ and $\{\alpha_n\}\subset\Gamma$, such that $\alpha_n p_0\to\xi$ under the cone topology and $d(\alpha_n p_0, c_{p_0,\xi})<D$ is bounded.

	Therefore, $\xi\in\text{pr}_{p_0}B(\alpha_n p_0, D)$ for all $n\in\mathbb{N}$. By Lemma~\ref{lem_4_2}, we can conclude
	\begin{displaymath}
		d(p_0,\alpha_n p_0)-2D\leq \beta_\xi(p_0,\alpha_n p_0)\leq d(p_0,\alpha_n p_0),
	\end{displaymath}
    as required.
\end{proof}

The following theorem gives an equivalent condition for a point to be a conical limit point.
\begin{theorem}\label{thm_5_3}
	Let $\widetilde{M}$ be a complete simply connected, uniform visibility manifold with no conjugate points. Then $\xi\in L_c(\Gamma)$ if and only if there exists a sequence of distinct isometries ${\{\alpha_n\}}_{n=1}^\infty\subset\Gamma$, such that for any $\eta\in\widetilde{M}(\infty)-\{\xi\}$, the pairs ${\{(\alpha_n\xi, \alpha_n\eta)\}}_{n=1}^\infty$ are contained in a compact subset of $\widetilde{M}^2(\infty)=\widetilde{M}(\infty)\times\widetilde{M}(\infty)-\{(\xi,\xi)\mid\xi\in\widetilde{M}(\infty)\}$.
\end{theorem}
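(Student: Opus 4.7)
My plan is to reduce the theorem to bookkeeping with Busemann functions by means of the identity
\begin{equation*}
\beta_p(\alpha\xi,\alpha\eta)\;=\;\beta_{\alpha^{-1}p}(\xi,\eta),\qquad\alpha\in\Gamma,\ (\xi,\eta)\in\widetilde{M}^2(\infty),
\end{equation*}
which follows from the $\Gamma$-equivariance $\beta_{\alpha\mu}(\alpha a,\alpha b)=\beta_\mu(a,b)$ of the Busemann cocycle applied with a reference point of the form $\alpha y$ for $y\in c_{\xi,\eta}$. A further application of the cocycle $\beta_\mu(a,b)+\beta_\mu(b,c)=\beta_\mu(a,c)$ rewrites the hypothesis ``$(\alpha_n\xi,\alpha_n\eta)$ stays in a compact subset of $\widetilde{M}^2(\infty)$'', which is the same as $\beta_p(\alpha_n\xi,\alpha_n\eta)\leq C_\eta$ uniformly in $n$, as
\begin{equation*}
(\ast)\qquad\beta_\xi(p,\gamma_np)+\beta_\eta(p,\gamma_np)\;\geq\;A_\eta\qquad\text{for all }n,\ \gamma_n:=\alpha_n^{-1},
\end{equation*}
with $A_\eta$ a constant depending on $\eta$ only.

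For $(\Rightarrow)$ I take the defining conical approximation $\{\gamma_n\}\subset\Gamma$ with $\gamma_np\to\xi$ and $d(\gamma_np,c_{p,\xi})\leq D$, and set $\alpha_n=\gamma_n^{-1}$. Lemma~\ref{lem_4_2} supplies $\beta_\xi(p,\gamma_np)\geq d(p,\gamma_np)-2D$, while the triangle inequality gives $\beta_\eta(p,\gamma_np)\geq-d(p,\gamma_np)$. Adding, the two $d(p,\gamma_np)$ contributions cancel and yield $(\ast)$ with $A_\eta=-2D$, which translates back to the required uniform lower bound on $d_p(\alpha_n\xi,\alpha_n\eta)$.

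For $(\Leftarrow)$ I set $\gamma_n=\alpha_n^{-1}$; discreteness of $\Gamma$ and distinctness of $\{\alpha_n\}$ force $d(p,\gamma_np)\to\infty$, and after passing to a subsequence $\gamma_np\to\zeta\in\widetilde{M}(\infty)$ in the cone topology. The identity turns $(\ast)$ into $\beta_{\gamma_np}(\xi,\eta)\leq C_\eta$, equivalently $d_{\gamma_np}(\xi,\eta)\geq\delta_\eta>0$. In our uniform visibility setting this forces $d(\gamma_np,c_{\xi,\eta})\leq R_\eta$ for each $\eta\neq\xi$ (the technical step discussed below). Hence $\gamma_np$ lies in a bounded neighbourhood of $c_{\xi,\eta}$, and any cone-topology accumulation point of such a sequence must be an ideal endpoint of $c_{\xi,\eta}$; i.e.\ $\zeta\in\{\xi,\eta\}$ for every $\eta\neq\xi$. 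Non-elementarity of $\Gamma$ makes infinitely many choices of $\eta$ available, so the only consistent possibility is $\zeta=\xi$. Fixing one $\eta\neq\xi$, the $\xi$-rays of $c_{p,\xi}$ and $c_{\xi,\eta}$ are positively asymptotic, so Proposition~\ref{prop_2_7}(1) gives a uniform fellow-travel constant $K$; combining with $d(\gamma_np,c_{\xi,\eta})\leq R_\eta$ yields $d(\gamma_np,c_{p,\xi})\leq R_\eta+K$ for all large $n$, establishing $\xi\in L_c(\Gamma)$.

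The main technical obstacle is the visibility-quantitative implication ``$d_q(\xi,\eta)\geq\delta\Longrightarrow d(q,c_{\xi,\eta})\leq R(\delta,\xi,\eta)$''. In $\mathrm{CAT}(-1)$ this is immediate from the formula $\beta_q(\xi,\eta)=2d(q,c_{\xi,\eta})+O(1)$; here it must be extracted by combining the uniform visibility constant (which bounds $d(q,c_{\xi,\eta})$ whenever the angle $\measuredangle_q(\xi,\eta)$ subtended by $c_{\xi,\eta}$ at $q$ is bounded away from $0$) with a Busemann-cocycle estimate along the lines of Lemma~\ref{lem_4_2} to convert a small angle into a small $d_q$. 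Without a curvature comparison this is the delicate point of the argument.
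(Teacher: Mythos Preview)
Your overall plan—reduce compactness in $\widetilde{M}^2(\infty)$ to a uniform bound on the Gromov product $\beta_p(\alpha_n\xi,\alpha_n\eta)=\beta_{\alpha_n^{-1}p}(\xi,\eta)$ and then run Busemann-cocycle bookkeeping—is the standard route in $\mathrm{CAT}(-1)$ or pinched negative curvature, but it relies on properties of $\beta_p$ that the paper never establishes and that are genuinely unclear in the no-conjugate-points visibility setting. Two concrete gaps:

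\emph{(i) The equivalence ``compact subset of $\widetilde{M}^2(\infty)$'' $\Leftrightarrow$ ``$\beta_p$ bounded'' is not available.} You use both directions: for $(\Rightarrow)$ you deduce $\beta_p(\alpha_n\xi,\alpha_n\eta)\le C_\eta$ and want compactness, which needs $\beta_p\to\infty$ near the diagonal; for $(\Leftarrow)$ you need that compactness gives a $\beta_p$-bound, which requires local boundedness (continuity) of $\beta_p$ on $\widetilde{M}^2(\infty)$. Neither is proved in the paper, and continuity of Busemann functions in the ideal variable is a nontrivial issue once one leaves nonpositive curvature. The inequality $\beta_p(\xi',\eta')\le 2\,d(p,c_{\xi',\eta'})$ goes the wrong way to force $\beta_p\to\infty$ as the pair approaches the diagonal.

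\emph{(ii) The step ``$d_q(\xi,\eta)\ge\delta\Rightarrow d(q,c_{\xi,\eta})\le R$'' that you flag as delicate is exactly the missing ingredient, and you do not close it.} Your suggestion to combine visibility with Lemma~4.2 does not work directly: visibility bounds the distance once the \emph{angle} $\measuredangle_q(\xi,\eta)$ is bounded below, but converting a Gromov-product lower bound into an angle lower bound is precisely what would need a comparison inequality you do not have here.

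The paper circumvents both issues by working with angles instead of $\beta_p$. Since $\measuredangle_{p_0}$ is continuous in the cone topology (Proposition~2.7(2)), a compact set in $\widetilde{M}^2(\infty)$ has $\measuredangle_{p_0}$ bounded below by some $\delta>0$, and uniform visibility \emph{directly} yields $d(p_0,c_{\alpha_n\xi,\alpha_n\eta})=d(\alpha_n^{-1}p_0,c_{\xi,\eta})\le R$. For the forward direction the paper bounds $d(\alpha_n^{-1}p_0,c_{\xi,\eta})$ via the fellow-travelling estimate of Proposition~2.7(1) (your idea here is similar in spirit), and then uses the continuity of the endpoint map to show that a uniform distance bound keeps the pairs away from the diagonal. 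The upshot: replace your Gromov-product reformulation by the angular one, and the technical obstacle you identify simply does not arise.
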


\begin{proof}
	Pick a compact subset $A\subset\widetilde{M}^2(\infty)$ and a point $p_0\in\widetilde{M}$. Under the cone topology, the angle between two points on the ideal boundary measured at the point $p_0$ is a continuous function. Let $\delta=\min\limits_{(\eta_1,\eta_2)\in A}\measuredangle_{p_0}(\eta_1,\eta_2)$. We have $\delta>0$ as $A$ is compact and does not intersect with the diagonal.

	For any pairs of point $(\eta_1,\eta_2)\in A$ and all connecting geodesics $c_{\eta_1,\eta_2}$, by the visibility of $\widetilde{M}$, we can find some $R=R(p_0,\delta)>0$, such that
	\begin{displaymath}
		d(p_0,c_{\eta_1,\eta_2})\leq R.
	\end{displaymath}

	Therefore, if ${\{(\alpha_n\xi, \alpha_n\eta)\}}_{n=1}^\infty\subset A$, we have $d(p_0,c_{\alpha_n\xi,\alpha_n\eta})=d(\alpha_n^{-1}p_0, c_{\xi,\eta})$ is bounded for all $n$.

	For the `only if' part, if $\xi\in L_{c}(\Gamma)$, we can find $D=D(p_0)$ and $\{\alpha_n\}\subset\Gamma$, such that $\alpha_n^{-1}p_0\to\xi$ and $d(\alpha_n^{-1}p_0, c_{p_0,\xi})\leq D$ bounded. Let $q_0$ be the intersection point of the horosphere $H_{\xi}(p_0)$ and any connecting geodesic $c_{\xi,\eta}$ (choose any if the connecting geodesic is not unique).
	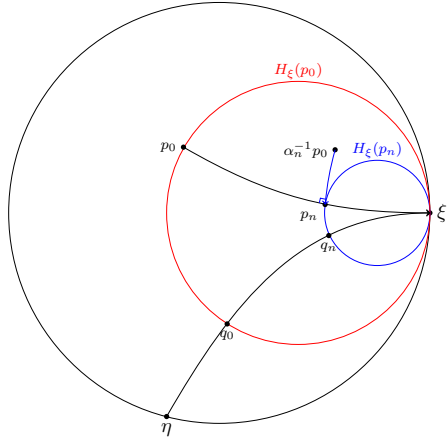
\begin{figure}[htbp]
		\centering
		\scalebox{0.7}{
			\begin{tikzpicture}
				\draw (0,0) circle (4);
				\coordinate [label=right:$\xi$] (X) at (4,0);
				\node at (X) [circle, fill, inner sep=1pt] {};
				\draw (3,0) [blue] circle (1);
				\node at (3,1.2) [blue] {$\scriptstyle H_{\xi}(p_n)$};
				\draw (1.5,0) [red] circle (2.5);
				\node at (1.5,2.7) [red] {$\scriptstyle H_{\xi}(p_0)$};
				\coordinate [label=below:$\eta$] (Y) at (-1,-3.87);
				\node at (Y) [circle, fill, inner sep=1pt] {};
				\draw [->] (Y) to [in=180, out=60] (X);
				\coordinate [label=left:$\scriptstyle p_0$] (A) at (-0.68,1.25);
				\node at (A) [circle, fill, inner sep=1pt] {};
				\draw [->] (A) to [in=180, out=-30] (X);

				\coordinate [label=below:$\scriptstyle q_0$] (Q0) at (0.15,-2.11);
				\node at (Q0) [circle, fill, inner sep=1pt] {};
				\coordinate [label=below:$\scriptstyle q_n$] (Qn) at (2.08,-0.43);
				\node at (Qn) [circle, fill, inner sep=1pt] {};
				\coordinate [label=below left:$\scriptstyle p_n$] (Pn) at (2.01,0.16);
				\node at (Pn) [circle, fill, inner sep=1pt] {};
				\coordinate [label=left:$\scriptstyle \alpha_n^{-1}p_0$] (P) at (2.2,1.2);
				\node at (P) [circle, fill, inner sep=1pt] {};
				\draw [->, blue](P) to [in=84, out=255](Pn);
				\draw [blue] (1.9,0.19) to (1.92,0.28) to (2.02,0.26);
			\end{tikzpicture}
		}
        \caption{Control the distance to $c_{p_0,\eta}$.}\label{fig4}
	\end{figure}

	As in Figure~\ref{fig4}, denote the projection of $\alpha_n^{-1}p_0$ onto $c_{p_0,\xi}$ by $p_n$, and the intersection of $H_{\xi}(p_n)$ and $c_{\xi,\eta}$ by $q_n$. By Proposition~\ref{prop_2_7}~(1), we have
	\begin{displaymath}
        d(\alpha_n^{-1}p_0, c_{\xi,\eta})\leq d(\alpha_n^{-1}p_0,p_n)+d(p_n,q_n)\leq D+2R(\frac{\pi}{2})+3d(p_0,q_0).
	\end{displaymath}

	Use the equivalent condition above, we have ${\{(\alpha_n\xi,\alpha_n\eta)\}}_{n=1}^\infty$ is contained in some compact subset of $\widetilde{M}^2(\infty)$.

	For the `if' part, we can find a sequence of distinct isometries $\{\alpha_n\}$, such that for any $\eta\in\widetilde{M}-\{\xi\}$, the pairs $\{(\alpha_n\xi,\alpha_n\eta)\}$ is contained in some compact subset of $\widetilde{M}^2(\infty)$. The above equivalent condition suggests that $d(p_0,c_{\alpha_n\xi,\alpha_n\eta})$ is bounded. As $\Gamma$ is discrete and the choice of $\eta$ is arbitrary, we have that
	\begin{displaymath}
		\lim_{n\to\infty}\alpha_n p_0=\xi.
	\end{displaymath}

	Together with the boundness, we get $\xi$ is a conical limit point.
\end{proof}

For the rest of this section, we will study the measures regarding the conical limit points set.

\begin{theorem}\label{thm_5_4}
	Suppose $\xi\in L_c(\Gamma)$ and $\mu={\{\mu_q\}}_{q\in\widetilde{M}}$ is the Patterson-Sullivan measure with $\delta_\Gamma>0$. Then $\xi$ cannot be an atom of $\mu$.
\end{theorem}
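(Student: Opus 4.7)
I will argue by contradiction, exploiting the interplay between the conformality of $\mu$ and its $\Gamma$-equivariance, together with the control on Busemann cocycles along a conical sequence that is given by Proposition~\ref{prop_5_2}. Suppose $\xi$ is an atom of the Patterson-Sullivan measure and fix a base point $p_0\in\widetilde{M}$; set $a := \mu_{p_0}(\{\xi\})>0$. The strategy is to evaluate the mass of $\xi$ with respect to the translated reference measures $\mu_{\alpha_n p_0}$ (using conformality), and then translate the atom back to $p_0$ using equivariance. This will produce, for each $n$, an atom of $\mu_{p_0}$ whose mass grows without bound, contradicting finiteness of $\mu_{p_0}$.

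\textbf{Key identity.} Applied at the singleton $\{\xi\}$, the conformal relation $\tfrac{d\mu_{p_0}}{d\mu_{\alpha_n p_0}}(\xi)=e^{-\delta_\Gamma\beta_\xi(p_0,\alpha_n p_0)}$ gives
\[
\mu_{p_0}(\{\xi\})=e^{-\delta_\Gamma\beta_\xi(p_0,\alpha_n p_0)}\,\mu_{\alpha_n p_0}(\{\xi\}).
\]
The $\Gamma$-equivariance $\mu_{\alpha p}(\alpha A)=\mu_p(A)$, taken with $\alpha=\alpha_n$ and $A=\{\alpha_n^{-1}\xi\}$, yields $\mu_{\alpha_n p_0}(\{\xi\})=\mu_{p_0}(\{\alpha_n^{-1}\xi\})$. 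Combining the two identities,
\[
\mu_{p_0}(\{\alpha_n^{-1}\xi\})=a\,e^{\delta_\Gamma\beta_\xi(p_0,\alpha_n p_0)}.
\]

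\textbf{Producing the contradiction.} Since $\xi\in L_c(\Gamma)$, Proposition~\ref{prop_5_2} furnishes a sequence $\{\alpha_n\}\subset\Gamma$ along which $\beta_\xi(p_0,\alpha_n p_0)\to +\infty$. With $\delta_\Gamma>0$ by hypothesis, the displayed equation forces $\mu_{p_0}(\{\alpha_n^{-1}\xi\})\to\infty$. This contradicts the finiteness of the Borel measure $\mu_{p_0}$, since every singleton satisfies $\mu_{p_0}(\{\alpha_n^{-1}\xi\})\le\mu_{p_0}(\widetilde{M}(\infty))<\infty$. Hence no such atom $\xi$ can exist, proving the theorem.

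\textbf{Main obstacle.} The argument reduces to a short bookkeeping computation once Proposition~\ref{prop_5_2} is in hand; no further visibility or no-conjugate-point input is required at this stage. The one subtle point is that both the conformal relation and equivariance must be used at an individual point, not merely almost everywhere. This is harmless here because both operations transport atomic mass exactly: the Radon-Nikodym identity evaluated on a singleton reads as an equality of atomic masses, and equivariance preserves the mass of any Borel set including singletons. It is worth remarking that the hypothesis $\delta_\Gamma>0$ is essential to the argument, as it is precisely what converts the unbounded growth of $\beta_\xi(p_0,\alpha_n p_0)$ into unbounded growth of the atomic mass.
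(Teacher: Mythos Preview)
Your proof is correct and uses the same core identity as the paper: combining conformality and $\Gamma$-equivariance to get $\mu_{p_0}(\{\alpha_n^{-1}\xi\}) = a\,e^{\delta_\Gamma\beta_\xi(p_0,\alpha_n p_0)}$, together with the growth $\beta_\xi(p_0,\alpha_n p_0)\to+\infty$ along a conical sequence.

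Your endgame is, however, cleaner than the paper's. The paper sums these atomic masses over $n$, bounds the sum by $\mu_q(\widetilde{M}(\infty))/\mu_q(\xi)$, and then derives a contradiction from the divergence of the individual terms; for the summation bound to make sense it must first establish an auxiliary claim that (after passing to a subsequence) the points $\alpha_n^{-1}\xi$ are all distinct, which takes an additional argument. You bypass this entirely by observing that a \emph{single} singleton mass $\mu_{p_0}(\{\alpha_n^{-1}\xi\})$ tending to infinity already contradicts $\mu_{p_0}(\widetilde{M}(\infty))<\infty$. This is a genuine simplification: the distinctness step in the paper is unnecessary.
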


\begin{proof}
	Assume the contrary, suppose we have $\mu_q(\xi)>0$ for some $q\in\widetilde{M}$. As $\xi$ is a conical limit point, we can find $D=D(q)$ and $\{\alpha_n\}\subset\Gamma$, such that $\alpha_n q\to\xi$ and $d(\alpha_n p, c_{q,\xi})<D$. Thus, $\xi\in\text{pr}_q B(\alpha_n q,D)$.

	By Lemma~\ref{lem_4_2}, we have
	\begin{displaymath}
		d(q,\alpha_n q)-2D\leq\beta_{\xi}(q,\alpha_n q)\leq d(q,\alpha_n q).
	\end{displaymath}

	Since $\alpha_n q\to\xi$, we have $d(q,\alpha_n q)\to\infty$, implying $\beta_{\xi}(q,\alpha_n q)\to\infty$.

	\textbf{Claim:}~By passing to a sub-sequence if needed, the set ${\{\alpha_n^{-1}\xi\}}_{n=1}^\infty$ is a set of distinct points in $\widetilde{M}(\infty)$.

	If this claim holds, we can derive that
	\begin{displaymath}
		\begin{aligned}
			\sum_{n=1}^\infty\mathrm{e}^{-\delta_\Gamma\beta_{\xi}(\alpha_n q,q)} & =\sum_{n=1}^\infty \frac{\mu_{\alpha_n q}(\xi)}{\mu_q(\xi)}     \\
			                                                                      & =\frac{1}{\mu_q(\xi)}\sum_{n=1}^\infty\mu_{q}(\alpha_n^{-1}\xi) \\
			                                                                      & \leq\frac{1}{\mu_q(\xi)}\mu_q(\widetilde{M}(\infty))<\infty.
		\end{aligned}
	\end{displaymath}

	Which contradicts to the fact $\beta_\xi(\alpha_n q,q)=-\beta_\xi(q,\alpha_n q)\to -\infty$.

	Now we are going to verify this claim. Assume the contrary, there is a point $\eta\in\widetilde{M}(\infty)$ such that $\alpha_n^{-1}\xi=\eta$ for all $n\in\mathbb{N}$, with the distance $d(q,\alpha_n q)$ increasing with $n$. This can be done as $\Gamma$ is discrete. Therefore, we can conclude that
	\begin{displaymath}
		\begin{aligned}
			1 & =\frac{\mu_{\alpha_n^{-1}q}(\alpha_n^{-1}\xi)}{\mu_q(\xi)}                                       \\
			  & =\frac{\mu_{\alpha_n^{-1}q}(\eta)}{\mu_q(\xi)}                                                   \\
			  & =\frac{\mu_{\alpha_n^{-1}q}(\alpha_1^{-1}\xi)}{\mu_q(\xi)}                                       \\
			  & =\frac{\mu_{\alpha_1\alpha_n^{-1}q}(\xi)}{\mu_q(\xi)}                                            \\
			  & =\mathrm{e}^{-\delta_\Gamma\beta_{\xi}(\alpha_1\alpha_n^{-1}q,q)},\qquad \forall n\in\mathbb{N}.
		\end{aligned}
	\end{displaymath}

	As $\delta_\Gamma\neq0$, we have for all $n\in\mathbb{N}$,
	\begin{displaymath}
		\beta_\xi(\alpha_1\alpha_n^{-1}q,q)=\beta_{\alpha_1^{-1}\xi}(\alpha_n^{-1}q,\alpha_1^{-1}q)=0.
	\end{displaymath}

	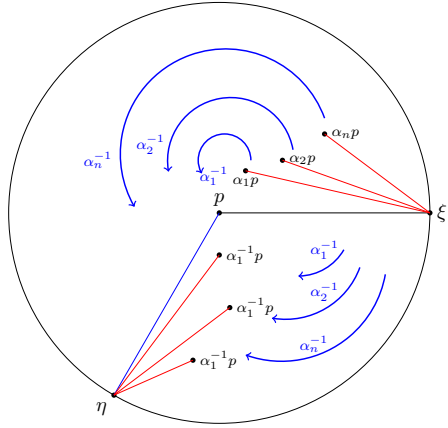
\begin{figure}[htbp]
		\centering
		\scalebox{0.7}{
			\begin{tikzpicture}
				\coordinate [label=above:$p$] (P) at (0,0);
				\node at (P) [circle, fill, inner sep=1pt] {};
				\draw (P) circle (4);
				\coordinate [label=right:$\xi$] (X) at (4,0);
				\node at (X) [circle, fill, inner sep=1pt] {};
				\coordinate [label=below left:$\eta$] (Y) at (-2,-3.464);
				\node at (Y) [circle, fill, inner sep=1pt] {};
				\draw (P) to (X);
				\draw [blue](P) to (Y);
				\coordinate[label=below:$\scriptstyle\alpha_1 p$] (A1) at (0.5,0.8);
				\node at (A1) [circle, fill, inner sep=1pt] {};
				\draw [blue, ->, thick] (0.6,1) arc (0:210:0.5);
				\node at (-0.1,0.7) [blue] {$\scriptstyle\alpha_1^{-1}$};
				\coordinate[label=right:$\scriptstyle\alpha_2 p$] (A2) at (1.2,1);
				\node at (A2) [circle, fill, inner sep=1pt] {};
				\draw [blue, ->, thick] (1.4,1.2) arc (10:190:1.2);
				\node at (-1.3,1.3) [blue]{$\scriptstyle\alpha_2^{-1}$};
				\coordinate[label=right:$\scriptstyle\alpha_n p$] (A3) at (2,1.5);
				\node at (A3) [circle, fill, inner sep=1pt] {};
				\draw [blue, ->, thick] (2,1.8) arc (20:210:2);
				\node at (-2.3,1) [blue]{$\scriptstyle\alpha_n^{-1}$};
				\draw[red] (A1) to (X);
				\draw[red] (A2) to (X);
				\draw[red] (A3) to (X);

				\draw[blue, <-, thick] (1.5,-1.2) arc (-90:-30:1);
				\node at (2,-0.7) [blue]{$\scriptstyle\alpha_1^{-1}$};
				\draw[blue, <-, thick] (1,-2) arc (-100:-20:1.5);
				\node at (2,-1.5) [blue]{$\scriptstyle\alpha_2^{-1}$};
				\draw[blue, <-, thick] (0.5,-2.7) arc (-110:-10:2);
				\node at (1.8,-2.5) [blue] {$\scriptstyle\alpha_n^{-1}$};
				\coordinate [label=right:$\scriptstyle\alpha_1^{-1}p$] (B1) at (0.2,-1.8);
				\node at (B1) [circle, fill, inner sep=1pt] {};
				\coordinate [label=right:$\scriptstyle\alpha_1^{-1}p$] (B2) at (0,-0.8);
				\node at (B2) [circle, fill, inner sep=1pt] {};
				\coordinate [label=right:$\scriptstyle\alpha_1^{-1}p$] (B3) at (-0.5,-2.8);
				\node at (B3) [circle, fill, inner sep=1pt] {};
				\draw [red] (B1) to (Y);
				\draw [red] (B2) to (Y);
				\draw [red] (B3) to (Y);
			\end{tikzpicture}
		}
		\caption{Under the Isometry of $\alpha_n$'s.}\label{fig5}
	\end{figure}
	Thus, $\alpha_n^{-1}q$ lies on the horosphere $H_\eta(\alpha_1^{-1}q)$, and $d(\alpha_n^{-1}q, c_{q,\eta})<D$. But $\Gamma$ is discrete, this is not possible. The claim holds, as well as the theorem.

\end{proof}

\begin{theorem}\label{thm_5_5}
	Let $\widetilde{M}$ be a complete simply connected, uniform visibility manifold with no conjugate points, and $\Gamma$ be a non-elementary discrete group of $\text{Iso}(\widetilde{M})$. Let $\mu={\{\mu_q\}}_{q\in\widetilde{M}}$ be the Patterson-Sullivan measure. For a point $p\in\widetilde{M}$, if the corresponding Poincar\'e series converges at the critical exponent:
	\begin{displaymath}
		P(\delta_\Gamma,p,p)=\sum_{\alpha\in\Gamma}\mathrm{e}^{-\delta_\Gamma d(p,\alpha p)}<\infty,
	\end{displaymath}
	then $\mu_p(L_c(\Gamma))=0$, namely the conical limit points set has measure zero. Actually, for all $q\in\widetilde{M}$, the measure $\mu_q(L_c(\Gamma))$ is zero.
\end{theorem}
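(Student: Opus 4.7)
The plan is to combine the Sullivan Shadow Lemma (Theorem~\ref{thm_4_4}) with a Borel--Cantelli type argument applied to shadows of orbit balls.

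First, I would recast the definition of a conical limit point in terms of shadows. If $\xi\in L_c(\Gamma)$, then by definition there exist $D>0$ and a sequence ${\{\alpha_n\}}\subset\Gamma$ with $\alpha_n p\to \xi$ (so $d(p,\alpha_n p)\to\infty$) and $d(\alpha_n p, c_{p,\xi})\leq D$. Picking $y_n\in c_{p,\xi}$ realizing this distance, one has $y_n\in B(\alpha_n p,D)$ and $c_{p,y_n}(+\infty)=\xi$, so $\xi\in\text{pr}_p(B(\alpha_n p,D))$ for every $n$. Enlarging $D$ to an integer at least $R_0$ (the constant in Theorem~\ref{thm_4_4}) only strengthens the shadow condition, which gives the inclusion
\begin{equation*}
    L_c(\Gamma)\subset\bigcup_{\substack{D\in\mathbb{N}\\ D\geq R_0}}\bigcap_{N\geq 1}\bigcup_{\substack{\alpha\in\Gamma\\ d(p,\alpha p)\geq N}}\text{pr}_p(B(\alpha p,D)).
\end{equation*}

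Second, I would estimate the $\mu_p$-measure of the right-hand side. For each fixed $D\geq R_0$, Theorem~\ref{thm_4_4} supplies a constant $C=C(D)>0$ with $\mu_p(\text{pr}_p(B(\alpha p,D)))\leq C\,\mathrm{e}^{-\delta_\Gamma d(p,\alpha p)}$ for every $\alpha\in\Gamma$. Summing over the tail $\{\alpha:d(p,\alpha p)\geq N\}$ yields
\begin{equation*}
    \mu_p\!\left(\bigcup_{\alpha:\, d(p,\alpha p)\geq N}\text{pr}_p(B(\alpha p,D))\right)\leq C\sum_{\alpha:\,d(p,\alpha p)\geq N}\mathrm{e}^{-\delta_\Gamma d(p,\alpha p)},
\end{equation*}
and the convergence of $P(\delta_\Gamma,p,p)$ forces this upper bound to tend to $0$ as $N\to\infty$. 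Hence the intersection over $N$ has $\mu_p$-measure zero for each fixed $D$, and the outer union over $D\in\mathbb{N}$ being countable gives $\mu_p(L_c(\Gamma))=0$.

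Third, for the statement that $\mu_q(L_c(\Gamma))=0$ for every $q\in\widetilde{M}$, the conformal density relation $\mathrm{d}\mu_q/\mathrm{d}\mu_p=\mathrm{e}^{-\delta_\Gamma\beta_\xi(q,p)}$ shows that $\mu_p$ and $\mu_q$ are mutually absolutely continuous, since the density is strictly positive and finite (bounded in terms of $d(p,q)$ by the triangle inequality for the Busemann function). The set $L_c(\Gamma)$ is basepoint-independent, so a $\mu_p$-null set is automatically $\mu_q$-null.

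I do not anticipate a substantial obstacle here: all the heavy lifting is performed by the Sullivan Shadow Lemma together with the hypothesis on the Poincar\'e series, and the argument is essentially a Borel--Cantelli estimate. The only technical point worth being careful about is organizing $L_c(\Gamma)$ as a countable union indexed by integers $D\geq R_0$, so that Theorem~\ref{thm_4_4} can be applied with a single constant $C(D)$ on each piece.
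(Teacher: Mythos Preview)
Your proposal is correct and follows essentially the same Borel--Cantelli argument as the paper: both express $L_c(\Gamma)$ as a countable union of limsup sets of shadows $\text{pr}_p(B(\alpha p,D))$, apply the Sullivan Shadow Lemma to bound the measure of each tail by the tail of the Poincar\'e series, and conclude. Your version is in fact slightly more careful in restricting to $D\geq R_0$ so that Theorem~\ref{thm_4_4} applies directly, and in spelling out the mutual absolute continuity for the passage to general $q$.
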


\begin{proof}
	Given $\Gamma$ a discrete group, list all elements in $\Gamma$ as $\{\alpha_n\}$. We can write
	\begin{displaymath}
		L_c(\Gamma)=\bigcup_{k=1}^\infty\bigcap_{N=1}^{\infty}\bigcup_{n=N}^\infty \text{pr}_p B(\alpha_n p,k).
	\end{displaymath}

	Let $L^k_c(\Gamma)=\bigcap_{N=1}^{\infty}\bigcup_{n=N}^\infty \text{pr}_p B(\alpha_n p,k)$, it is enough to show that $\mu_p(L^k_c(\Gamma))=0$ for all $k\in\mathbb{N}$.

	By definition, we have that
	\begin{equation}
		\begin{aligned}
			\mu_p(L^k_c(\Gamma)) & =\mu_p(\bigcap_{N=1}^{\infty}\bigcup_{n=N}^\infty \text{pr}_p B(\alpha_n p,k)) \\
			                     & \leq\sum_{n=N}^\infty\mu_p(\text{pr}_p B(\alpha_n p,k))                        \\
			                     & \leq C\sum_{n=N}^\infty\mathrm{e}^{-\delta_\Gamma d(p,\alpha_n p)}.
		\end{aligned}
	\end{equation}

	The last line follows from the Sullivan Shadow Lemma~\ref{thm_4_4}, and $C$ is the constant given there. Note that this inequality hold for any $N\in\mathbb{N}$, and the Poincar\'e series converges implies the tail of the series tends to $0$. Therefore, $\mu_p(L_c^k(\Gamma))=0$ as required.
\end{proof}

Under a similar setting, we can discuss the measure of the invariant set in the conical limit point set.

\begin{theorem}\label{thm_5_6}
	Let $\widetilde{M}$ be a complete simply connected, uniform visibility manifold with no conjugate points, and $\Gamma$ be a non-elementary discrete group of $\text{Iso}(\widetilde{M})$. Let $\mu={\{\mu_q\}}_{q\in\widetilde{M}}$ be the Patterson-Sullivan measure. Suppose $A\subset L_c(\Gamma)$ is a $\Gamma$-invariant subset, and $p\in\widetilde{M}$ is an arbitrary point. Then $\mu_p(A)$ is trivial, either $\mu_p(A)=0$ or $\mu_p(A)=\mu_p(\widetilde{M}(\infty))$.
\end{theorem}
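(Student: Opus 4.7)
The plan is a Lebesgue-density-plus-transport argument. Assume for contradiction $0 < \mu_p(A) < \mu_p(\widetilde{M}(\infty))$; setting $B := \widetilde{M}(\infty) \setminus A$, both $A$ and $B$ are $\Gamma$-invariant (the latter since $L_c(\Gamma)$ is). The Sullivan shadow lemma (Theorem~\ref{thm_4_4}) combined with Proposition~\ref{prop_2_7}(3) first implies that shadows form a good differentiation basis for $\mu_p$; via a Vitali-type covering argument one concludes that $\mu_p$-a.e.\ $\xi \in A$ is a density point of $A$ along shadows. Pick such a $\xi$; since $A \subset L_c(\Gamma)$, we may choose $\{\alpha_n\} \subset \Gamma$ realizing conicality: $\alpha_n p \to \xi$ and $d(\alpha_n p, c_{p,\xi}) \leq D$. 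For any $r > D$, the shadows $S_n := \text{pr}_p(B(\alpha_n p, r))$ contain $\xi$ with $d_p$-diameters tending to $0$ (by Proposition~\ref{prop_2_7}(3)), so
\[\lim_{n\to\infty} \frac{\mu_p(B \cap S_n)}{\mu_p(S_n)} = 0.\]

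Next, transport by $\alpha_n^{-1}$. Using $\Gamma$-equivariance ($\mu_p(\alpha_n^{-1}E) = \mu_{\alpha_n p}(E)$), the conformal formula $d\mu_{\alpha_n p}/d\mu_p = e^{-\delta_\Gamma \beta_\cdot(\alpha_n p, p)}$, and the Busemann bound $|\beta_\eta(\alpha_n p, p) + d(p,\alpha_n p)| \leq 2r$ on $S_n$ from Lemma~\ref{lem_4_2}, the pushforward $\alpha_n^{-1}$ distorts $\mu_p$-ratios on subsets of $S_n$ by at most a factor $e^{2\delta_\Gamma r}$. Since $\alpha_n^{-1}B = B$ yields $\alpha_n^{-1}(B \cap S_n) = B \cap \alpha_n^{-1}S_n$, one obtains
\[\frac{\mu_p(B \cap \alpha_n^{-1}S_n)}{\mu_p(\alpha_n^{-1}S_n)} \leq e^{2\delta_\Gamma r}\,\frac{\mu_p(B \cap S_n)}{\mu_p(S_n)} \xrightarrow{n\to\infty} 0.\]
Meanwhile $\alpha_n^{-1}S_n = \text{pr}_{\alpha_n^{-1}p}(B(p,r))$, and by Proposition~\ref{prop_4_3} its complement has $d_p$-diameter bounded by some $\epsilon(r) \to 0$ as $r \to \infty$. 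Passing to a subsequence with $\alpha_n^{-1}p \to \eta \in \widetilde{M}(\infty)$, Proposition~\ref{prop_2_7}(2) places this complement eventually inside $\overline{B_{d_p}(\eta, \epsilon(r))}$.

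Combining the two estimates yields, for every $r > D$, the bound $\mu_p(B) \leq \mu_p(\overline{B_{d_p}(\eta, \epsilon(r))})$. Letting $r \to \infty$ and using that $\eta$ is not a $\mu_p$-atom forces $\mu_p(B) = 0$, contradicting the assumption. The main obstacle is ensuring $\eta$ non-atomic: since $\mu_p$ has at most countably many atoms while the density points of $A$ form a full $\mu_p$-subset, we may vary $\xi$ (and its conical sequence) to steer $\alpha_n^{-1}p$ away from any atom; moreover, Theorem~\ref{thm_5_4} rules out atomicity whenever $\eta \in L_c(\Gamma)$ (using $\delta_\Gamma > 0$ for non-elementary $\Gamma$). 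A secondary technical point is the shadow-basis Lebesgue differentiation used in the density-point step, which follows from the Sullivan shadow lemma and a standard Vitali covering lemma.
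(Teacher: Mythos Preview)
Your approach — Lebesgue density at a conical point $\xi$, transport by $\Gamma$ using the conformal relation together with the Busemann bound of Lemma~\ref{lem_4_2}, then control of the complement of the transported shadow via Proposition~\ref{prop_4_3} — is exactly the paper's. You correctly reach the intermediate conclusion $\mu_p(A^c) \leq \mu_p(\{\eta\}) \leq \lambda$, where $\lambda$ is the maximal point mass of $\mu_p$.

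The gap is in your final step. The suggestion to ``vary $\xi$ and its conical sequence to steer $\alpha_n^{-1}p$ away from any atom'' is not justified: the subsequential limit $\eta$ of $\alpha_n^{-1}p$ is not controlled by $\xi$ in any usable way, and nothing rules out uncountably many density points $\xi$ all producing the same atomic $\eta$. Your alternative — invoking Theorem~\ref{thm_5_4} — requires $\eta \in L_c(\Gamma)$, but $\eta$ is only known to lie in $L(\Gamma)$, not in the conical limit set.

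The paper closes this without ever needing to control $\eta$. From $\mu_p(A^c) \leq \lambda$ it argues that if $\lambda > 0$, any atom of maximal mass must lie in $A$: were such an atom $\eta_0$ in the $\Gamma$-invariant set $A^c$, its $\Gamma$-orbit (infinite, since a non-elementary $\Gamma$ has no global fixed point on $\widetilde{M}(\infty)$) would contribute at least two points of positive $\mu_p$-mass to $A^c$, forcing $\mu_p(A^c) > \lambda$, a contradiction. Hence $\eta_0 \in A \subset L_c(\Gamma)$, and now Theorem~\ref{thm_5_4} applies to give the contradiction. Thus $\lambda = 0$ and $\mu_p(A^c) = 0$. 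You should replace your steering argument with this one.
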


\begin{proof}
	As all the measures in the PS measure are equivalent, it is sufficient to show this is true for a given point. Fix a point $p\in\widetilde{M}$. $\Gamma$ is non-elementary implies that $\mu_p$ is not a single atom (see the discussion in the proof of the Sullivan Shadow Lemma~\ref{thm_4_4}).

	For any $A\subset L_c(\Gamma)$ an invariant set with $\mu_p(A)>0$, we want to show that it has full measure $\mu_p(\widetilde{M}(\infty))$. Pick a density point $\xi\in A$, as a conical limit point, we can find $\{\alpha_n\}\subset\Gamma$, such that $\alpha_n^{-1}(p)\to\xi$ under the cone topology, and as a density point, we have
	\begin{equation}\label{equ_5_1}
		\frac{\mu_p(\text{pr}_p B(\alpha_n^{-1}p,C)\cap A)}{\mu_p(\text{pr}_p B(\alpha_n^{-1}p,C))}\to 1.
	\end{equation}

	Here $C$ is a positive constant and can be chosen arbitrarily by Proposition~\ref{prop_2_7}~(3).

	Suppose $\lambda$ is the maximal possible point mass of $\mu_p$. We claim that for any $\epsilon>0$, there is $\delta_0=\delta_0(\lambda,\epsilon)>0$, such that the measure of any open ball in $\widetilde{M}(\infty)$, whose radius is less than $\delta_0$ under the $d_p$ metric, is no more than $\lambda+\epsilon$.

	Again we prove this claim by contradiction. Assume that there are open balls $B_n\subset\widetilde{M}(\infty)$ with radius $\delta_n$ decreasing to $0$, but $\mu_p(B_n)>\lambda+\epsilon$. By the compactness of the ideal boundary, we can assume that the center of $B_n$ is converging to a point $\xi$. This immediately suggests that $\mu_p(\xi)\geq \lambda+\epsilon>\lambda$, contradiction to the maximality of $\lambda$.

	By Proposition~\ref{prop_4_3}, we can choose some $C$, such that for any $x\in\widetilde{M}$, $d(x,p)>C$ implies $\widetilde{M}(\infty)-\text{pr}_x B(p,C)$ is contained in an open ball with radius smaller than $\delta_0$ defined above. Since $\Gamma$ is discrete, we can find some $n$ sufficiently large with $d(p,\alpha_n p)>C$. Therefore,
	\begin{equation}\label{equ_5_3}
		\begin{aligned}
			\mu_p(\text{pr}_{\alpha_n p}B(p,C)) & =\mu_p(\widetilde{M}(\infty))-\mu_p(\widetilde{M}(\infty)-\text{pr}_{\alpha_n p}B(p,C)) \\
			                                    & \geq\mu_p(\widetilde{M}(\infty))-\lambda-\epsilon.
		\end{aligned}
	\end{equation}

	Using the equivariance of the measures and the invariance of $A$ under $\Gamma$, we conclude that
	\begin{equation}\label{equ_5_4}
		\begin{aligned}
			\frac{\mu_p(\text{pr}_{\alpha_n p} B(p,C)\cap A)}{\mu_p(\text{pr}_{\alpha_n p} B(p,C))} & =\frac{\mu_{\alpha_n^{-1}p}(\text{pr}_p B(\alpha_n^{-1}p,C)\cap A)}{\mu_{\alpha_n^{-1}p}(\text{pr}_{p} B(\alpha_n^{-1}p,C))}                                                                                                                             \\
			                                                                                        & =\frac{\int_{\text{pr}_p B(\alpha_n^{-1}p,C)\cap A}\mathrm{e}^{-\delta_\Gamma\beta_{\xi}(\alpha_n^{-1}p,p)}\,\mathrm{d}\mu_p(\xi)}{\int_{\text{pr}_p B(\alpha_n^{-1}p,C)}\mathrm{e}^{-\delta_\Gamma\beta_{\xi}(\alpha_n^{-1}p,p)}\,\mathrm{d}\mu_p(\xi)} \\
		\end{aligned}
	\end{equation}

	By Lemma~\ref{lem_4_2}, we have
	\begin{displaymath}
		d(p,\alpha_n p)-2C\leq\beta_{\xi}(p,\alpha_n^{-1}p)\leq d(p,\alpha_n p).
	\end{displaymath}

	By replacing the $\beta_{\xi}$ part in Equation~\ref{equ_5_4}, we get
	\begin{displaymath}
		\frac{\mu_p(\text{pr}_{\alpha_n p} B(p,C)\cap A)}{\mu_p(\text{pr}_{\alpha_n p} B(p,C))}\geq 1-\mathrm{e}^{2C\delta_\Gamma}\frac{\mu_p(\text{pr}_p B(\alpha_n^{-1}p,C)\cap A^c)}{\mu_p(\text{pr}_p B(\alpha_n^{-1}p,C))}.
	\end{displaymath}

	For any given $\epsilon$, $\xi$ is a density point, we can choose $n$ sufficiently large, such that
	\begin{displaymath}
		\frac{\mu_p(\text{pr}_p B(\alpha_n^{-1}p,C)\cap A)}{\mu_p(\text{pr}_p B(\alpha_n^{-1}p,C))}\geq 1-\mathrm{e}^{-2C\delta_\Gamma}\epsilon.
	\end{displaymath}

	Combining the last two inequalities, we can conclude that
	\begin{displaymath}
		\mu_p(\text{pr}_{\alpha_n p}B(p,C)\cap A)\geq (1-\epsilon)\mu_p(\text{pr}_{\alpha_n p}B(p,C)).
	\end{displaymath}

	Therefore, we can see that $A$ takes almost the full measure both in $\text{pr}_p B(\alpha_n^{-1}p, C)$ and $\text{pr}_{\alpha_n p}B(p,C)$, thus
	\begin{displaymath}
		\mu_p(A)\geq\mu_p(\text{pr}_{\alpha_n p}(B(p,C))\cap A)\geq(1-\epsilon)(\mu_p(\widetilde{M}(\infty))-\lambda-\epsilon).
	\end{displaymath}

	Let $\epsilon\to 0^+$, we have
	\begin{equation}\label{equ_5_5}
		\mu_p(A)\geq\mu_p(\widetilde{M}(\infty))-\lambda.
	\end{equation}

	To show $\mu_p(A)$ has the full measure, we only need to discuss on the maximal possible point mass $\lambda$ of $\mu_p$. If $\lambda=0$, we are done. Otherwise, if $\lambda>0$, by Equation~\ref{equ_5_5}, the point with maximal point mass must be contained in $A\subset L_c(\Gamma)$. But Theorem~\ref{thm_5_4} tells that there is no atom in the conical limit points set. Hence, in all cases we have that $\mu_p(A)=\mu_p(\widetilde{M}(\infty))$.
\end{proof}

\begin{remark}
	If we choose the invariant set $A=L_c(\Gamma)$, we can conclude $L_c(\Gamma)$ either has zero measure or has full measure under any $\mu_p$.
\end{remark}

Lastly, combine Theorem~\ref{thm_5_6} and Proposition~\ref{prop_3_6}, we have the following proposition.

\begin{proposition}\label{prop_5_7}
    Let $\mu={\{\mu_q\}}_{q\in\widetilde{M}}$ be the Patterson-Sullivan measure on $\widetilde{M}(\infty)$. If there exist a point $p$ (and hence for all points in $\widetilde{M}$), such that $\mu_p(L_c(\Gamma))>0$, then the action of $\Gamma$ on the ideal boundary is ergodic with respect to $\mu_p$. Furthermore, the PS measure is unique up to a positive multiple.
\end{proposition}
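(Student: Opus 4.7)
The plan is to deduce the proposition by combining the dichotomy for $\Gamma$-invariant subsets of $L_c(\Gamma)$ furnished by Theorem~\ref{thm_5_6} with the ergodicity characterization of Proposition~\ref{prop_3_6}. First, I would note the special case of Theorem~\ref{thm_5_6} with $A=L_c(\Gamma)$ (which is $\Gamma$-invariant by the $\Gamma$-equivariance of the cone topology, see Theorem~\ref{thm_5_3}): since by hypothesis $\mu_p(L_c(\Gamma))>0$, the dichotomy forces $\mu_p(L_c(\Gamma))=\mu_p(\widetilde{M}(\infty))$, i.e.\ the conical limit set has full PS-measure.

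Next, to establish ergodicity of the $\Gamma$-action on $\widetilde{M}(\infty)$ with respect to $\mu_p$, I would take an arbitrary $\Gamma$-invariant Borel set $A\subset\widetilde{M}(\infty)$ and consider $A':=A\cap L_c(\Gamma)$. Since both $A$ and $L_c(\Gamma)$ are $\Gamma$-invariant, so is $A'$; and by construction $A'\subset L_c(\Gamma)$. Hence Theorem~\ref{thm_5_6} applies and yields $\mu_p(A')\in\{0,\mu_p(\widetilde{M}(\infty))\}$. Because $L_c(\Gamma)$ has full measure, $\mu_p(A)=\mu_p(A')$, so the same dichotomy is inherited by $A$. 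This is precisely the ergodicity condition in the definition given before Proposition~\ref{prop_3_6}, so $\Gamma$ acts ergodically on $\widetilde{M}(\infty)$ with respect to $\mu_p$.

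For the second assertion, I would simply invoke Proposition~\ref{prop_3_6}: since $\mu=\{\mu_q\}$ is a $\delta_\Gamma$-dimensional conformal density and $\Gamma$ is ergodic with respect to $\mu$, we get $C(\delta_\Gamma)=\{\lambda\mu\mid\lambda>0\}$, so any other PS measure must be a positive scalar multiple of $\mu$. The extension from ``there exists $p$'' to ``for all $q\in\widetilde{M}$'' follows from the mutual absolute continuity of the $\mu_q$'s given by the conformal transformation rule (item (2) in the definition of a conformal density), which transfers the condition $\mu_p(L_c(\Gamma))>0$ from $p$ to every $q$.

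I do not expect any serious obstacle here: both Theorem~\ref{thm_5_6} and Proposition~\ref{prop_3_6} have already been proved, and the only content of the argument is the soft observation that a $\Gamma$-invariant set on a full-measure $\Gamma$-invariant subset inherits the dichotomy from the subset. The only point that requires a line of care is verifying $\Gamma$-invariance of $L_c(\Gamma)$, which follows immediately from the characterization in Theorem~\ref{thm_5_3} (the condition on $\{(\alpha_n\xi,\alpha_n\eta)\}$ being contained in a compact subset of $\widetilde{M}^2(\infty)$ is manifestly preserved under composition with isometries).
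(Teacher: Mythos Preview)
Your proposal is correct and follows essentially the same approach as the paper: apply Theorem~\ref{thm_5_6} first to $L_c(\Gamma)$ itself to get full measure, then to $A\cap L_c(\Gamma)$ for an arbitrary $\Gamma$-invariant $A$, and finally invoke Proposition~\ref{prop_3_6} for uniqueness. The only differences are cosmetic---you spell out the $\Gamma$-invariance of $L_c(\Gamma)$ and the mutual absolute continuity of the $\mu_q$'s, while the paper leaves these implicit.
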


\begin{proof}
	$L_c(\Gamma)$ is a $\Gamma$-invariant set with $\mu_p(L_c(\Gamma))>0$, it has full measure.

	Given a $\Gamma$-invariant set $A$ on the ideal boundary. Consider $E=A\cap L_c(\Gamma)$, $E$ is again a $\Gamma$-invariant set, which has either measure $0$ or the full measure. If $\mu_p(E)=0$, we have $\mu_p(A)=\mu_p(A\cap L_c(\Gamma))=0$. If $\mu_p(E)=\mu_p(\widetilde{M}(\infty))$, then $A\supset E$ has full measure.
\end{proof}

\section{\bf Dynamics of Geodesic Flows and the HTS Dichotomy}\label{sec6}
We abuse the notation a little and let $\phi_t$ denote the geodesic flow on both $T^1 M$ and $T^1\widetilde{M}$:
\begin{displaymath}
	\phi_t(\bm{v})=c'_{\bm{v}}(t),\quad \bm{v}\in T^1M\text{~or}~T^1\widetilde{M},~t\in\mathbb{R}.
\end{displaymath}

And let $\pi$ denote the standard projection from the unit tangent bundles to the base manifolds for both $T^1M$ and $T^1\widetilde{M}$.
\begin{definition}
	Let $\phi_t$ be the geodesic flow on $T^1M$.
	\begin{itemize}
		\item We say $\bm{v}\in T^1M$ is a \emph{conservative} point of the geodesic flow, if there is a compact subset $A\subset T^1M$ with a sequence of numbers $t_n\to+\infty$, such that $\phi_{t_n}(\bm{v})\in A$ for all $n$. Let $M_C$ be the set of all conservative points.
		\item We call $\bm{v}\in T^1M$ is a \emph{dissipative} point, if for any compact subset $A\subset T^1M$, we can find some $t_A$, such that $\phi_t(\bm{v})\notin A$ for all $t>t_A$. Let $M_D$ be the set of all dissipative points.
		\item $T^1M=M_C\sqcup M_D$. For any invariant measure $m$ with respect to $\phi_t$, we call the geodesic flow is \emph{conservative}, if $m(M_D)=0$. Similarly, we call it \emph{completely dissipative} if $m(M_C)=0$.
	\end{itemize}
\end{definition}

Now we are trying to build an intrinsic relation between the conservative point and the conical limit point.

\begin{proposition}\label{prop_6_2}
	$\bm{v}\in T^1M$ is a conservative point for $\phi_t$, if and only if $c_{\bm{\widetilde{v}}}(+\infty)\in L_c(\Gamma)$ is a conical limit point for some (hence any) lift $\bm{\widetilde{v}}\in T^1\widetilde{M}$.
\end{proposition}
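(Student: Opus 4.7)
The plan is to translate the dynamical statement ``$\bm{v}$ returns to a compact set of $T^1M$'' into the geometric statement ``the lifted geodesic in $\widetilde{M}$ passes within bounded distance of $\alpha_n p$ for infinitely many $\alpha_n\in\Gamma$,'' and then match this with the definition of conical limit point. Fix a lift $\widetilde{\bm{v}}\in T^1\widetilde{M}$ of $\bm{v}$, put $p:=\pi(\widetilde{\bm{v}})$ and $\xi:=c_{\widetilde{\bm{v}}}(+\infty)$. By Proposition~\ref{prop_2_6}, $c_{\widetilde{\bm{v}}}$ is exactly the unique connecting geodesic $c_{p,\xi}$. Independence of the chosen lift is automatic: any other lift differs by some $\alpha\in\Gamma$ sending the endpoint to $\alpha\xi$, while $L_c(\Gamma)$ is $\Gamma$-invariant (immediate from the definition together with Proposition~\ref{prop_2_7}(1), which bounds the discrepancy between $c_{p,\alpha\xi}$ and $\alpha c_{p,\xi}$).

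For the ``only if'' direction, suppose $\bm{v}$ is conservative, so $\phi_{t_n}(\bm{v})\in A$ for some compact $A\subset T^1M$ and some $t_n\to+\infty$. Then $c_{\bm{v}}(t_n)\in\pi(A)$, compact in $M$. Using local trivializations of $\pi:\widetilde{M}\to M$, lift $\pi(A)$ to a compact set $K\subset\widetilde{M}$ with $\pi(K)=\pi(A)$, whence $\pi^{-1}(\pi(A))=\bigcup_{\alpha\in\Gamma}\alpha K$. Choose $\alpha_n\in\Gamma$ with $c_{\widetilde{\bm{v}}}(t_n)\in\alpha_n K$; applying the isometry $\alpha_n^{-1}$ yields $d(c_{\widetilde{\bm{v}}}(t_n),\alpha_n p)\leq C:=\sup_{x\in K}d(x,p)<\infty$. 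Since $d(p,c_{\widetilde{\bm{v}}}(t_n))=t_n\to\infty$, also $d(p,\alpha_n p)\to\infty$; and Proposition~\ref{prop_2_7}(3), applied to the balls $B(c_{\widetilde{\bm{v}}}(t_n),C)$, confines $\alpha_n p$ to arbitrarily narrow cones about $\widetilde{\bm{v}}$ for large $n$, whence $\alpha_n p\to\xi$ in the cone topology. Combined with $d(\alpha_n p, c_{p,\xi})\leq C$, this gives $\xi\in L_c(\Gamma)$.

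For the ``if'' direction, assume $\xi\in L_c(\Gamma)$. Using the base point $p$, the definition provides $D>0$ and $\{\alpha_n\}\subset\Gamma$ with $\alpha_n p\to\xi$ and $d(\alpha_n p, c_{\widetilde{\bm{v}}})\leq D$. Pick $s_n\in\mathbb{R}$ realizing $d(\alpha_n p, c_{\widetilde{\bm{v}}}(s_n))\leq D$. The triangle inequality forces $|s_n|=d(p,c_{\widetilde{\bm{v}}}(s_n))\geq d(p,\alpha_n p)-D\to\infty$, and one excludes $s_n\to-\infty$ as follows: Proposition~\ref{prop_2_7}(3) applied to the reversed initial vector $-\widetilde{\bm{v}}$ shows that any subsequence with $s_n\to-\infty$ would force $\alpha_n p\to c_{\widetilde{\bm{v}}}(-\infty)$ in the cone topology, contradicting $\alpha_n p\to\xi$. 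Hence $s_n\to+\infty$. Projecting to $M$ gives $d_M(c_{\bm{v}}(s_n),\pi(p))\leq D$, so $\phi_{s_n}(\bm{v})$ lies in the compact subset of $T^1M$ consisting of unit tangent vectors based in $\overline{B_M(\pi(p),D)}$, and $\bm{v}$ is conservative.

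The main subtlety is the sign argument pinning down $s_n\to+\infty$: the conical-limit condition records only the unsigned distance from $\alpha_n p$ to the geodesic image $c_{\widetilde{\bm{v}}}(\mathbb{R})$, so the forward direction must be read off from the cone-topology convergence of $\alpha_n p$ rather than from the distance inequality itself. The remaining ingredients—lifting a compact set through the covering, and turning the bounded distance $C$ into cone-topology convergence via uniform visibility—are routine applications of Proposition~\ref{prop_2_7}.
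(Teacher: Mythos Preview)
Your proof is correct and follows essentially the same approach as the paper's: lift the compact set $A$ through the covering, extract the deck transformations $\alpha_n$, and bound $d(\alpha_n p, c_{\widetilde{\bm{v}}})$ by the diameter of the lifted set. The paper's version is considerably terser---it simply asserts $\alpha_n p\to c_{\widetilde{\bm{v}}}(+\infty)$ without citing Proposition~\ref{prop_2_7}(3), and dismisses the entire ``if'' direction with ``the argument is similar''---so your explicit sign argument ruling out $s_n\to-\infty$ and your appeal to uniform visibility for the cone-topology convergence fill in details the paper leaves to the reader.
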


\begin{proof}
	Suppose $\bm{v}\in T^1M$ is a conservative point of $\phi_t$, by definition, we can find some compact subset $A$ and $t_n\to+\infty$, such that $\phi_{t_n}(\bm{v})\in A$.

	Pick a lift $\bm{\widetilde{v}}$ of $\bm{v}$, and $\widetilde{A}$ is a lift of $A$. We can find a sequence of $\{\alpha_n\}$ such that
	\begin{displaymath}
		\phi_{t_n}(\bm{\widetilde{v}})\in\alpha_n(\widetilde{A}).
	\end{displaymath}

	Project to $\widetilde{M}$, we have
	\begin{equation}\label{equ_6_1}
		c_{\bm{\widetilde{v}}}(t_n)\in\alpha_n(\pi(\widetilde{A})).
	\end{equation}

	The set $A$ is compact, thus bounded. Let $K=\text{diam}(\pi(A))<+\infty$. $\Gamma$ being an isometry subgroup implies that
	\begin{equation}\label{equ_6_2}
		\text{diam}(\alpha_n\pi(\widetilde{A}))=\text{diam}(\pi(A))=K.
	\end{equation}

	Equation~\ref{equ_6_1} and~\ref{equ_6_2} together imply that $\alpha_n p\to c_{\bm{\widetilde{v}}}(+\infty)$ and $d(\alpha_n p, c_{\bm{\widetilde{v}}})\leq K$. By definition, $c_{\bm{\widetilde{v}}}(+\infty)\in L_c(\Gamma)$.

	For the other direction, the argument is similar.
\end{proof}

Next, we will construct the invariant measure for the geodesic flow using the PS measure. This invariant measure was first constructed for manifolds with constant negative curvature by Sullivan~\cite{Su}, on manifolds with pinched negative curvature by Kaimanovich~\cite{Ka1} and later generalized to non-positive curvature case by Knieper~\cite{Kn1}. On manifolds with no focal points, we studied this measure in~\cite{LWW}. It plays an important role in study the ergodic theory of geodesic flows.

Pick a point $p\in\widetilde{M}$, we can define a measure on $\widetilde{M}^2(\infty)$ by
\begin{displaymath}
	\mathrm{d}\widetilde{\mu}(\xi,\eta)=\mathrm{e}^{\delta_\Gamma\beta_p(\xi,\eta)}\mathrm{d}\mu_p(\xi)\mathrm{d}\mu_p(\eta).
\end{displaymath}

Easy to check that this definition does not depend on the choice of $p$, and it is $\Gamma$-invariant by the equivariance of the PS measure. Furthermore, this measure induces a $\phi_t$ and $\Gamma$-invariant measure $\widetilde{m}$ on $T^1\widetilde{M}$ by
\begin{displaymath}
    \widetilde{m}(A)=\int_{\widetilde{M}^2(\infty)}\text{Length}(c_{\xi,\eta}\cap \pi(A))\,\mathrm{d}\widetilde{\mu}(\xi,\eta)
\end{displaymath}
for any Borel set $A\subset T^1\widetilde{M}$. Here $c_{\xi,\eta}$ is the unique geodesic connecting $\xi$ and $\eta$, and $\pi:T^1 \widetilde{M}\to\widetilde{M}$ is the standard projection.

By definition, this measure $\widetilde{m}$ is a $\phi_t$ and $\Gamma$ invariant measure on $T^1\widetilde{M}$, which induces a $\phi_t$ invariant measure on $T^1M$ by the standard projection map. Let $m$ denote this measure on $T^1M$, which is known as the Bowen-Margulis-Sullivan (BMS) measure. A natural question regarding this measure is that if the geodesic flow $\phi_t$ is conservative with respect to it. Actually, we get a dichotomy to answer this question as shown in the following theorem.

\begin{theorem}\label{thm_6_3}
	Let $M$ be a complete, regular and visibility manifold with no conjugate points, and $\widetilde{M}$ is the universal cover. Let $m$ denote the BMS measure defined above. We have the following dichotomy:

	If $\mu_p(L_c(\Gamma))=\mu_p(\widetilde{M}(\infty))$, then the geodesic flow $\phi_t$ is conservative with respect to $m$. Otherwise, it is completely dissipative with respect to $m$.
\end{theorem}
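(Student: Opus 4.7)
The plan is to reduce the dichotomy, via the Hopf-type structure of $\widetilde{m}$, to the two-valued nature of $\mu_p(L_c(\Gamma))$. First I invoke Proposition~\ref{prop_6_2}: the preimage of $M_C$ in $T^1\widetilde{M}$ under the covering projection equals
\begin{displaymath}
\widetilde{M}_C := \{\widetilde{\bm{v}} \in T^1\widetilde{M} \mid c_{\widetilde{\bm{v}}}(+\infty) \in L_c(\Gamma)\},
\end{displaymath}
and its complement (the preimage of $M_D$) is $\widetilde{M}_D := \{\widetilde{\bm{v}} \mid c_{\widetilde{\bm{v}}}(+\infty) \notin L_c(\Gamma)\}$. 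Both sets are $\Gamma$-invariant, since $L_c(\Gamma)$ is. Then I apply the Remark following Theorem~\ref{thm_5_6} with the $\Gamma$-invariant set $A = L_c(\Gamma)$: the value $\mu_p(L_c(\Gamma))$ must be either $0$ or $\mu_p(\widetilde{M}(\infty))$. This already excludes any mixed behaviour between conservative and dissipative and leaves exactly the two cases to be distinguished.

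For the computation, let $F \subset T^1 \widetilde{M}$ be a Borel fundamental domain for the $\Gamma$-action, so that $m(S) = \widetilde{m}(\widetilde{S} \cap F)$ for every Borel $S \subset T^1 M$ lifting to the $\Gamma$-invariant set $\widetilde{S}$. Using the definition of $\widetilde{m}$,
\begin{displaymath}
\widetilde{m}(\widetilde{M}_C \cap F) = \int_{\widetilde{M}^2(\infty)} \text{Length}(c_{\xi,\eta} \cap \pi(\widetilde{M}_C \cap F))\,\mathrm{d}\widetilde{\mu}(\xi,\eta).
\end{displaymath}
Every tangent vector to $c_{\xi,\eta}$ has forward limit $\eta$, so the geodesic $c_{\xi,\eta}$ lies in $\pi(\widetilde{M}_C)$ whenever $\eta \in L_c(\Gamma)$ and is disjoint from it otherwise. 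Hence
\begin{displaymath}
\widetilde{m}(\widetilde{M}_C \cap F) = \int_{\widetilde{M}(\infty) \times L_c(\Gamma)} \text{Length}(c_{\xi,\eta} \cap \pi(F))\,\mathrm{d}\widetilde{\mu}(\xi,\eta),
\end{displaymath}
with the analogous formula for $\widetilde{m}(\widetilde{M}_D \cap F)$ integrating over $\widetilde{M}(\infty) \times L_c(\Gamma)^c$ instead.

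In Case 1 where $\mu_p(L_c(\Gamma)) = \mu_p(\widetilde{M}(\infty))$, the integrand for $m(M_D)$ vanishes $\widetilde{\mu}$-almost everywhere (since $\widetilde{\mu}$ is equivalent to $\mu_p \otimes \mu_p$ on $\widetilde{M}^2(\infty)$ via the Radon--Nikodym factor $e^{\delta_\Gamma\beta_p(\xi,\eta)}$), so $m(M_D) = 0$ and $\phi_t$ is conservative. In Case 2 where $\mu_p(L_c(\Gamma)) = 0$, the same reasoning yields $m(M_C) = 0$ and $\phi_t$ is completely dissipative.

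The main subtle point is the passage to the fundamental domain together with the identification of $T^1\widetilde{M}$, up to a $\widetilde{m}$-null set, with $\widetilde{M}^2(\infty)\times\mathbb{R}$ via $(\xi,\eta,t)\mapsto c'_{\xi,\eta}(t)$. This is exactly where regularity and visibility enter: regularity guarantees uniqueness, and Proposition~\ref{prop_2_6} combined with visibility guarantees existence of the connecting geodesic $c_{\xi,\eta}$ for every $(\xi,\eta)\in\widetilde{M}^2(\infty)$, so the parametrization is a well-defined Borel bijection onto a $\widetilde{m}$-conull set. Once this identification is in place, the Fubini-type splitting above is immediate from the definition of $\widetilde{m}$, and the dichotomy follows without further ingredients.
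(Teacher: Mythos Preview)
Your argument is correct and follows essentially the same route as the paper: invoke Theorem~\ref{thm_5_6} (or its Remark) to force $\mu_p(L_c(\Gamma))\in\{0,\mu_p(\widetilde{M}(\infty))\}$, use Proposition~\ref{prop_6_2} to identify the lift of $M_C$ with vectors whose forward endpoint lies in $L_c(\Gamma)$, and then read off the dichotomy from the product form of $\widetilde{m}$. The paper compresses all of this into three sentences; you have simply unpacked the ``$m$ is a product measure'' step via the fundamental domain and the Hopf parametrization, which is exactly what the paper leaves implicit. One cosmetic point: when you write that $c_{\xi,\eta}$ ``is disjoint from $\pi(\widetilde{M}_C)$'' when $\eta\notin L_c(\Gamma)$, this is not literally true at the level of base points (if $L_c(\Gamma)\neq\emptyset$ then $\pi(\widetilde{M}_C)=\widetilde{M}$); what you mean, and what your last paragraph makes clear, is that the oriented tangent line $\{c'_{\xi,\eta}(t)\}$ is disjoint from $\widetilde{M}_C$. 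This is the same looseness present in the paper's definition of $\widetilde{m}$, and your Hopf-coordinate remark resolves it.
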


\begin{proof}
	The Theorem~\ref{thm_5_6} shows that for $\mu_p$ in the PS measure, $\mu_p(L_c(\Gamma))$ is either $0$ or full. By Proposition~\ref{prop_6_2} and $m$ being a product measure, we know $\mu_p(L_c(\Gamma))=\mu_p(\widetilde{M}(\infty))$ implies that $M_C$ has full measure in $T^1M$, thus $\phi_t$ is conservative.

	Similar argument works for the complete dissipativeness.
\end{proof}

\begin{theorem}\label{thm_6_4}
	Let $M$ be a complete, regular and visibility manifold with no conjugate points. The geodesic flow $\phi_t$ is conservative with respect to $m$, if and only if the Poincar\'e series diverges at the critical exponent: $\sum_{\alpha\in\Gamma}\mathrm{e}^{-\delta_\Gamma d(p,\alpha p)}=\infty$.
\end{theorem}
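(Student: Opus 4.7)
My plan is to split the biconditional and exploit the zero-or-full dichotomy for $\mu_p(L_c(\Gamma))$ from Theorem~\ref{thm_5_6}, together with the characterization of conservativity provided by Theorem~\ref{thm_6_3}. The direction ``$\phi_t$ conservative $\Rightarrow$ Poincar\'e series divergent'' is immediate by contrapositive: if $P(\delta_\Gamma, p, p) < \infty$, then Theorem~\ref{thm_5_5} forces $\mu_p(L_c(\Gamma))=0$, whence Theorem~\ref{thm_6_3} declares $\phi_t$ completely dissipative.

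For the converse, assume $P(\delta_\Gamma,p,p)=\infty$; by Theorems~\ref{thm_5_6} and~\ref{thm_6_3} it suffices to produce $\mu_p(L_c(\Gamma))>0$. Set $A_\alpha := \mathrm{pr}_p(B(\alpha p, R))$. I would first verify the geometric inclusion
\[
\limsup_{\alpha \in \Gamma} A_\alpha \;\subseteq\; L_c(\Gamma):
\]
any $\xi$ lying in infinitely many shadows admits a sequence $\alpha_n \in \Gamma$ with $d(\alpha_n p, c_{p,\xi}) \leq R$, and discreteness of $\Gamma$ together with uniform visibility (Proposition~\ref{prop_2_7}~(3)) force $\alpha_n p \to \xi$ in the cone topology, so $\xi$ is a conical limit point. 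Second, for $R$ large enough that the Sullivan Shadow Lemma (Theorem~\ref{thm_4_4}) applies, one has
\[
\sum_{\alpha \in \Gamma} \mu_p(A_\alpha) \;\geq\; \frac{1}{C}\sum_{\alpha \in \Gamma} e^{-\delta_\Gamma d(p,\alpha p)} \;=\; \infty.
\]
The plan is then to upgrade divergence of this sum to positivity of $\mu_p(\limsup A_\alpha)$ via a Paley--Zygmund / Kochen--Stone type divergent Borel--Cantelli argument, after establishing a quasi-independence bound of the form $\mu_p(A_\alpha \cap A_\beta) \leq C' \, \mu_p(A_\alpha)\mu_p(A_\beta)$ for $\alpha \neq \beta$.

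The main obstacle is this quasi-independence estimate, since shadow overlaps on a general visibility boundary lack the clean hyperbolic structure available in the negatively curved or $\mathrm{CAT}(-1)$ settings. I expect to obtain it by a case analysis leveraging Proposition~\ref{prop_2_7} together with the Mohsen Shadow Lemma (Theorem~\ref{thm_4_5}): when $\alpha p$ and $\beta p$ are roughly aligned from $p$ (say $d(p,\alpha p) \leq d(p, \beta p)$ with the geodesic to $\beta p$ fellow-traveling the ray through $\alpha p$), $A_\alpha \cap A_\beta$ is essentially the deeper shadow $A_\beta$, and the shadow lemma applied to the ``gap'' element $\alpha^{-1}\beta$ produces the factor $\mu_p(A_\alpha)$ via the Busemann cocycle; when $\alpha p$ and $\beta p$ lie in transverse directions, uniform visibility forces an angular gap between the shadows so the intersection is negligible relative to the product. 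If this geometric route turns out to be too delicate, a fallback is a direct dynamical argument: lift a compact set $B \subset T^1 M$ with $m(B)>0$ into a fundamental domain of $T^1 \widetilde{M}$, express the time generic orbits spend in $B$ as a weighted count over $\Gamma$ whose weights are controlled from below by $\mu_p(A_\alpha)$ via the shadow lemmas, and conclude that divergence of the Poincar\'e series forces a set of positive $m$-measure of recurrent orbits, i.e.\ conservativity of $\phi_t$.
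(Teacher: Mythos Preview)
Your forward direction matches the paper exactly.

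For the converse, your Plan A diverges from the paper, and the quasi-independence bound $\mu_p(A_\alpha \cap A_\beta) \leq C'\,\mu_p(A_\alpha)\mu_p(A_\beta)$ is a genuine gap as presented. The ``transverse'' case is not as clean as you suggest: shadows $A_\alpha, A_\beta$ with $\alpha p, \beta p$ at a large angle from $p$ can still overlap substantially, and uniform visibility alone does not make the intersection negligible relative to the product without a quantitative estimate you have not supplied. In the aligned case the cocycle reduction to $\alpha^{-1}\beta$ is plausible, but outside $\mathrm{CAT}(-1)$ the lack of convexity and of a clean shadow-inclusion lemma makes this delicate; your sketch does not close it.

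The paper circumvents this entirely by implementing, in precise form, your fallback Plan B. It works not with boundary shadows indexed by $\alpha\in\Gamma$ but with dynamical events in $T^1\widetilde{M}$ indexed by \emph{time}: set $\widetilde{T}_R = T^1 B(p,R)$ and $\widetilde{E}_n = \widetilde{T}_{16R} \cap \phi_{-n}(\Gamma\widetilde{T}_{16R})$, and normalize $P(\widetilde{E}_n) = \widetilde{m}(\widetilde{E}_n)/\widetilde{m}(\widetilde{T}_{16R})$. A length estimate over $\Sigma_1(\alpha,R)\times\Sigma_2(\alpha,R)$ gives $\sum_n P(\widetilde{E}_n) \geq D\sum_\alpha e^{-\delta_\Gamma d(p,\alpha p)} = \infty$. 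The crucial point is that quasi-independence is now \emph{free}: since $\Gamma\widetilde{T}_{16R}$ and $\widetilde{m}$ are both $\Gamma$-invariant, one checks directly that $P(\widetilde{E}_{n+k}\mid \widetilde{E}_n) \leq P(\widetilde{E}_k)$, hence $P(\widetilde{E}_{n+k}\cap\widetilde{E}_n)\leq P(\widetilde{E}_n)P(\widetilde{E}_k)$, and the divergent Borel--Cantelli lemma gives $P(\widetilde{E}_n\ \text{i.o.})=1$. Projecting to $T^1 M$ yields conservativity. The switch from group-indexing to time-indexing, together with grouping all of $\Gamma\widetilde{T}_{16R}$ into a single target, is exactly what makes the correlation estimate a one-line consequence of invariance rather than a hard geometric lemma.
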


\begin{proof}
	If $\phi_t$ is conservative, by Theorem~\ref{thm_6_3}, $\mu_p(L_c(\Gamma))\neq 0$. But by Theorem~\ref{thm_5_5}, the Poincar\'e series converges at the critical exponent implies $\mu_p(L_c(\Gamma))=0$. Thus, $\sum_{\alpha\in\Gamma}\mathrm{e}^{-\delta_\Gamma d(p,\alpha p)}=\infty$.

	For the other direction, assume the Poincar\'e series diverges to infinity, and for any point $p\in\widetilde{M}$. Let $\lambda$ be the maximal possible point mass of $\mu_p$. Follow the claim in Theorem~\ref{thm_4_4}, we can find $R=R(p)>0$, such that for any $x\in\widetilde{M}$ with $d(x,p)>R$, we have
	\begin{displaymath}
		\mu_p(\Sigma_1(x,R))\geq\frac{1}{2}(\mu_p(\widetilde{M}(\infty))-\lambda)\coloneqq C_1>0.
	\end{displaymath}

	Here
	\begin{displaymath}
		\Sigma_1(x,R)=\{\xi\in\widetilde{M}(\infty)\mid\xi=c(+\infty), c(0)\in B(x,R), \exists t>0~\text{s.t.}~c(t)\in B(p,R)\}.
	\end{displaymath}

	Given $\Gamma$ is discrete, there are at most finitely many $\alpha\in\Gamma$ with $\alpha(B(p,R))\cap B(p,R)\neq\emptyset$. Without loss of generality, we can assume that $\alpha(B(p,R))\cap B(p,R)=\emptyset$ for all elements but the identity in $\Gamma$. This is possible because changing finitely many elements does not affect the convergence of the Poincar\'e series. Let $\Gamma B(p,R)$ denote the collection of the image of $B(p,R)$ under $\Gamma$.

	Therefore, for any $\alpha\in\Gamma-\{e\}$, let
	\begin{displaymath}
		\begin{aligned}
			\Sigma_1(\alpha,R) & =\{\xi\in\widetilde{M}(\infty)\mid\xi=c(+\infty), d(c(0),\alpha p)<R, \exists t>0~\text{s.t.}~c(t)\in B(p,R)\}    \\
			\Sigma_2(\alpha,R) & =\{\eta\in\widetilde{M}(\infty)\mid\eta=c(-\infty), d(c(0),\alpha p)<R, \exists t>0~\text{s.t.}~c(t)\in B(p,R)\}.
		\end{aligned}
	\end{displaymath}

	See Figure~\ref{fig6} for a sketch of these sets.

	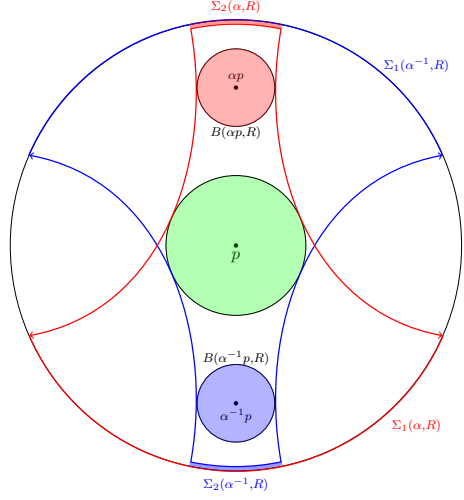
\begin{figure}[htbp]
		\centering
        \scalebox{0.6}{
			\begin{tikzpicture}
				\coordinate [label=below:$p$] (P) at (0,0);
				\node at (P) [circle, fill, inner sep=1pt] {};
				\draw (P) circle (5);
				\draw [fill, green, opacity=0.3] (P) circle (1.55);
				\draw (P) circle (1.55);
				\draw [blue, thick] (4.58,2) arc (23.5:156.5:5);
				\node at (4,4) [blue]{$\scriptstyle\Sigma_1(\alpha^{-1},R)$};
				\draw [red, thick] (-4.58,-2) arc (-156.5:-23.5:5);

				\node at (4,-4) [red]{$\scriptstyle\Sigma_1(\alpha,R)$};
				\draw [red, thick] (1.01,4.8) arc (78:101.8:4.9);
				\draw [red, line width=0.9mm, opacity=0.4] (1.03,4.85) arc (77.5:101.7:4.9);
				\node at (0,5.3) [red]{$\scriptstyle\Sigma_2(\alpha,R)$};
				\draw [blue, thick] (-1.01,-4.8) arc (-101.8:-78:4.9);
				\draw [blue, line width=0.9mm, opacity=0.4] (-1.03,-4.85) arc (-101.7:-77.5:4.9);
				\node at (0,-5.3) [blue]{$\scriptstyle\Sigma_2(\alpha^{-1},R)$};
				\draw [blue, thick, ->] (1,-4.8) to [in=190, out=100] (4.58,2);
				\draw [blue, thick, ->] (-1,-4.8) to [in=-10, out=80] (-4.58,2);
				\draw [red, thick, ->] (1,4.8) to [in=170, out=-100] (4.58,-2);
				\draw [red, thick, ->] (-1,4.8) to [in=10, out=-80] (-4.58,-2);
				\coordinate [label=above:$\scriptstyle\alpha p$] (A) at (0,3.5);
				\node at (A) [circle, fill, inner sep=1pt] {};
				\draw (A) circle (0.86);
				\draw (A) [fill, red, opacity=0.3] circle (0.86);
				\node at (0,2.5) {$\scriptstyle B(\alpha p,R)$};
				\coordinate [label=below:$\scriptstyle\alpha^{-1} p$] (B) at (0,-3.5);
				\node at (B) [circle, fill, inner sep=1pt] {};
				\draw (B) circle (0.86);
				\draw (B) [fill, blue, opacity=0.3] circle (0.86);
				\node at (0,-2.5) {$\scriptstyle B(\alpha^{-1} p,R)$};
			\end{tikzpicture}
		}
		\caption{Sketch of $\Sigma_1$ and $\Sigma_2$.}\label{fig6}
	\end{figure}
	We have $\mu_p(\Sigma_1(\alpha,R))\geq C_1>0$. Observe that $\Sigma_1(\alpha^{-1},R)=\alpha^{-1}\Sigma_2(\alpha,R)$, thus
	\begin{displaymath}
		\begin{aligned}
			\mu_p(\Sigma_2(\alpha,R)) & =\mu_p(\alpha\alpha^{-1}\Sigma_2(\alpha,R))                                                                \\
			                          & =\mu_p(\alpha\Sigma_1(\alpha^{-1},R))                                                                      \\
			                          & =\mu_{\alpha^{-1}p}(\Sigma_1(\alpha^{-1},R))                                                               \\
			                          & =\int_{\Sigma_1(\alpha^{-1},R)}\mathrm{e}^{-\delta_\Gamma\beta_\xi(\alpha^{-1}p,p)}\,\mathrm{d}\mu_p(\xi).
		\end{aligned}
	\end{displaymath}

	For any $\xi\in\Sigma_1(\alpha^{-1},R)$, by definition, we can find a point $q\in\widetilde{M}$ and $t_1>0$, such that $d(q,\alpha^{-1}p)<R$ and $c_{q,\xi}(t_1)\in B(p,R)$. By Proposition~\ref{prop_2_7}~(1), we have 
    \begin{displaymath}
        \begin{aligned}
            d(c_{\alpha^{-1}p,\xi}(t_1),c_{q,\xi}(t_1))
            &\leq 2R(\frac{\pi}{2})+3d(\alpha^{-1}p,q)\\
            &\leq 2R(\frac{\pi}{2})+3R.
        \end{aligned}
    \end{displaymath}
    
    By triangle inequality, we have 
    \begin{displaymath}
       d(c_{\alpha^{-1}p,\xi},p)\leq             d(c_{\alpha^{-1}p,\xi}(t_1),c_{q,\xi}(t_1))+d(c_{q,\xi}(t_1),p)\leq 2R(\frac{\pi}{2})+4R.
    \end{displaymath}

 Thus, $\xi\in\text{pr}_{\alpha^{-1}p}B(p,2R(\frac{\pi}{2})+4R)$.

	By Lemma~\ref{lem_4_2}, we have
	\begin{displaymath}
        d(p,\alpha p)-4(R(\frac{\pi}{2})+2R)\leq\beta_\xi(\alpha^{-1}p,p)\leq d(p,\alpha p).
	\end{displaymath}

	Replace $\beta_\xi$ part in the previous equation, we have the measure of $\Sigma_2(\alpha,R)$ is bounded by
	\begin{equation}\label{equ_6_3}
		C_1\mathrm{e}^{-\delta_\Gamma d(p,\alpha p)}\leq\mu_p(\Sigma_2(\alpha,R))\leq C_2\mathrm{e}^{-\delta_\Gamma d(p,\alpha p)}.
	\end{equation}

    Here $C_2=\mathrm{e}^{4(R(\frac{\pi}{2})+2R)\delta_\Gamma}\mu_p(\widetilde{M}(\infty))$ is a constant, independent of the choice of points.

    Now given $(\xi,\eta)\in\Sigma_1(\alpha,R)\times\Sigma_2(\alpha,R)$ and let $\widetilde{T}_R=T^1B(p,R)\subset T^1\widetilde{M}$. We want to estimate a lower bound for $\text{Length}(c_{\xi,\eta}\cap\pi(\widetilde{T}_R\cap\phi_{-d(p,\alpha p)}(\alpha\widetilde{T}_R)))$.

	Not hard to observe that
	\begin{displaymath}
		B(p,2R)\subset\pi(\widetilde{T}_{16R}\cap\phi_{-d(p,\alpha p)}(\alpha\widetilde{T}_{16R})),
	\end{displaymath}
	and for $k_2>k_1>1$, for any $\alpha\in\Gamma-\{e\}$,
	\begin{equation}\label{equ_6_4}
		\Sigma_1(\alpha,k_1R)\times\Sigma_2(\alpha,k_1R)\subset\Sigma_1(\alpha,k_2R)\times\Sigma_2(\alpha,k_2R).
	\end{equation}

	Thus, for any
	\begin{displaymath}
		(\xi,\eta)\in\Sigma_1(\alpha,R)\times\Sigma_2(\alpha,R)\subset\Sigma_1(\alpha,16R)\times\Sigma_2(\alpha,16R).
	\end{displaymath}

	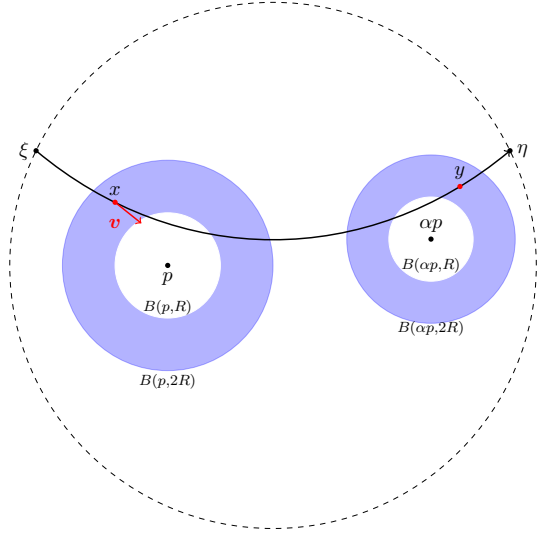
\begin{figure}[htbp]
		\centering
		\scalebox{0.7}{
			\begin{tikzpicture}
				\draw [dashed] (0,0) circle (5);
				\coordinate [label=left:$\xi$] (X) at (-4.5,2.18);
				\node at (X) [circle, fill, inner sep=1pt] {};
				\coordinate [label=right:$\eta$] (Y) at (4.5,2.18);
				\node at (Y) [circle, fill, inner sep=1pt] {};
				\draw (-2,0) [fill, blue, opacity=0.3] circle (2);
				\draw (-2,0) [fill, white] circle (1);
				\draw (3,0.5) [fill, blue, opacity=0.3] circle (1.6);
				\draw (3,0.5) [fill, white] circle (0.8);
				\coordinate [label=below:$p$] (A) at (-2,0);
				\node at (A) [circle, fill, inner sep=1pt] {};
				\coordinate [label=above:$\alpha p$] (B) at (3,0.5);
				\node at (B) [circle, fill, inner sep=1pt] {};

				\draw [->, thick] (X) to [in=220, out=-40](Y);
				\node at (-2,-2.2) {$\scriptstyle B(p,2R)$};
				\node at (-2,-0.8) {$\scriptstyle B(p,R)$};
				\node at (3,-1.2) {$\scriptstyle B(\alpha p,2R)$};
				\node at (3,0) {$\scriptstyle B(\alpha p,R)$};
				\coordinate [label=above:$x$] (X) at (-3,1.2);
				\node at (X) [red, circle, fill, inner sep=1pt] {};
				\draw [red, ->, thick] (X) to (-2.5,0.8);
				\node at (-3,0.8) [red] {$\bm{v}$};
				\coordinate [label=above:$y$] (Y) at (3.55,1.5);
				\node at (Y) [red, circle, fill, inner sep=1pt] {};
			\end{tikzpicture}
		}
		\caption{Choice of $x$ and $y$.}\label{fig7}
	\end{figure}

	As shown in Figure~\ref{fig7}, pick a connecting geodesic $c_{\xi,\eta}$, and $x\in c_{\xi,\eta}\cap B(p,2R)$. Parametrize the geodesic such that $c_{\xi,\eta}(0)=x$ and $\bm{v}=c'_{\xi,\eta}(0)$. Choose another point on the geodesic $y\in c_{\xi,\eta}\cap B(\alpha p,2R)$, followed by the triangle inequality,
	\begin{displaymath}
		d(p,\alpha p)-4R\leq d(x,y)\leq d(p,\alpha p)+4R.
	\end{displaymath}

	We could conclude that $d(c_{\bm{v}}(d(p,\alpha p)),\alpha p)\leq 8R$, thus
	\begin{displaymath}
		\bm{v}\in\widetilde{T}_{8R}\cap\phi_{-d(p,\alpha p)}(\widetilde{T}_{8R})\subset \widetilde{T}_{16R}\cap\phi_{-d(p,\alpha p)}(\widetilde{T}_{16R}).
	\end{displaymath}

	Project the geodesic segment to $T^1M$, the length has a lower bound
	\begin{displaymath}
		\text{Length}(\pi(c'_{\xi,\eta}\cap\widetilde{T}_{16R}\cap\phi_{-d(p,\alpha p)}(\widetilde{T}_{16R})))\geq 2R.
	\end{displaymath}

    We can find $A=A(R,\delta_0)$, such that
	\begin{equation}\label{equ_6_5}
		\begin{aligned}
			 & \widetilde{m}(\widetilde{T}_{16R}\cap\phi_{-d(p,\alpha p)}(\alpha\widetilde{T}_{16R}))                                                                                                                 \\
             & \geq\int_{\Sigma_1(\alpha,R)\times\Sigma_2(\alpha,R)}\text{Length}(c_{\xi,\eta}\cap\pi(\widetilde{T}_{16R}\cap\phi_{-d(p,\alpha p)}(\alpha\widetilde{T}_{16R})))\,\mathrm{d}\widetilde{\mu}(\xi,\eta) \\
			 & \geq A\mathrm{e}^{-\delta_\Gamma d(p,\alpha p)}.
		\end{aligned}
	\end{equation}

	Choose any $R>1$, for $\alpha\in\Gamma$, let $n\in\mathbb{N}$ such that $n\leq d(p,\alpha p)<n+1$. Similar to Equation~\ref{equ_6_5}, we can find $D=D(R,\delta_0)>0$, such that
	\begin{equation}\label{equ_6_6}
		\widetilde{m}(\widetilde{T}_{16R}\cap\phi_{-n}(\alpha\widetilde{T}_{16R}))\geq D\mathrm{e}^{-\delta_\Gamma d(p,\alpha p)}.
	\end{equation}

	Because the Poincar\'e series diverges, we can conclude that by taking $\alpha$ throughout $\Gamma$,
	\begin{displaymath}
		\sum_{n=1}^\infty    \widetilde{m}(\widetilde{T}_{16R}\cap\phi_{-n}(\Gamma\widetilde{T}_{16R}))\geq D\sum_{\alpha\in\Gamma}\mathrm{e}^{-\delta_\Gamma d(p,\alpha p)}=+\infty.
	\end{displaymath}

	Let $\widetilde{E}_n=\widetilde{T}_{16R}\cap\phi_{-n}(\Gamma\widetilde{T}_{16R})$ and define the probability of this set by the ratio
	\begin{displaymath}
		P(\widetilde{E}_n)=\frac{\widetilde{m}(\widetilde{T}_{16R}\cap\phi_{-n}(\Gamma\widetilde{T}_{16R}))}{\widetilde{m}(\widetilde{T}_{16R})}.
	\end{displaymath}

	We can check that $P(\widetilde{E}_{n+k}|\widetilde{E}_n)\leq P(\widetilde{E}_k)$. In fact, for any isometry $\alpha\in\Gamma$ such that $\alpha(\widetilde{T}_{16R})\cup\phi_{n}(\widetilde{T}_{16R})\neq\emptyset$, we have
	\begin{displaymath}
		\begin{aligned}
			\frac{\widetilde{m}(\alpha(\widetilde{T}_{16R})\cap\phi_n(\widetilde{T}_{16R})\cap\phi_{-k}(\Gamma\widetilde{T}_{16R}))}{\widetilde{m}(\alpha\widetilde{T}_{16R})} & \leq\frac{\widetilde{m}(\alpha(\widetilde{T}_{16R})\cap\phi_{-k}(\Gamma\widetilde{T}_{16R}))}{\widetilde{m}(\widetilde{T}_{16R})} \\
			                                                                                                                                                                   & =\frac{\widetilde{m}(\widetilde{T}_{16R}\cap\phi_{-k}(\Gamma\widetilde{T}_{16R}))}{\widetilde{m}(\widetilde{T}_{16R})}            \\
			                                                                                                                                                                   & =P(\widetilde{E}_k)
		\end{aligned}
	\end{displaymath}

	It follows that $P(\widetilde{E}_{n+k}\cap \widetilde{E}_n)=P(\widetilde{E}_{n+k}|\widetilde{E}_n)P(\widetilde{E}_n)\leq P(\widetilde{E}_n)P(\widetilde{E}_k)$. By Borel-Cantelli Lemma, $ P(\widetilde{E}_k\text{ i.\ o.})=1$. We can find $n_k\to\infty$ for $\widetilde{m}$-a.e. $\bm{v}\in\widetilde{T}_{16R}\subset T^1\widetilde{M}$, and $\alpha_{n_k}$ depends on $\bm{v}$, such that
	\begin{displaymath}
		\phi_{n_k}(\bm{v})\in\alpha_{n_k}(\widetilde{T}_{16R}).
	\end{displaymath}

	This suggests the geodesic $c_{\bm{v}}$ in $\widetilde{M}$ will intersect with $\Gamma(B(p,16R))$ for infinitely many times. Thus, on $T^1M$, the projection of the geodesic $\pi c_{\bm{v}}$ will intersect with the compact ball $\pi(\Gamma(B(p,16R)))$ for infinitely many times, namely $\pi\bm{v}$ is a conservative point.

	Therefore, the geodesic flow $\phi_t$ is conservative with respect to $m$.
\end{proof}

Next, we will prove that conservativity of the geodesic flow with respect to $m$ implies ergodicity. The following proposition is needed in the proof.

\begin{proposition}\label{prop_6_5}
    Let $M$ be a complete, regular and uniform visibility manifold with no conjugate points. Let $\xi\in L_c(\Gamma)$ and $p\in\widetilde{M}$, $\bm{v}\in T^1\widetilde{M}$ with $c_{\bm{v}}(+\infty)=\xi$.

    Then, for any $\bm{w}$ positive asymptote to $\bm{v}$, we can find some $a\in\mathbb{R}$, such that 
    \begin{displaymath}
        d(c_{\bm{v}}(t+a),c_{\bm{w}}(t))\to0,\qquad t\to +\infty
    \end{displaymath}

    Similar results hold for vectors negatively asymptotic.
\end{proposition}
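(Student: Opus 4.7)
The strategy is to choose a time shift $a$ that places the two geodesics on a common horosphere centered at $\xi$, and then to use the conical limit property of $\xi$ together with regularity to rule out any asymptotic gap between them. The key rigidity input is that regularity forbids two distinct geodesics in $\widetilde{M}$ from sharing both endpoints at infinity, so any bi-asymptotic pair must coincide.

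First I would set $a := \beta_\xi(c_{\bm{v}}(0), c_{\bm{w}}(0))$. The cocycle identity for the Busemann function then gives $\beta_\xi(c_{\bm{v}}(t+a), c_{\bm{w}}(t)) = 0$ for every $t \in \mathbb{R}$, so both points lie on a common horosphere centered at $\xi$. Put $f(t) := d(c_{\bm{v}}(t+a), c_{\bm{w}}(t))$; by Proposition~\ref{prop_2_7}(1), $f$ is bounded on $[0,+\infty)$. Since $\xi \in L_c(\Gamma)$, choose $\alpha_n \in \Gamma$ with $\alpha_n p \to \xi$ and $d(\alpha_n p, c_{p,\xi}) \leq D$; because $c_{\bm{v}}$ is positively asymptotic to $c_{p,\xi}$, this yields times $t_n \to +\infty$ and a constant $D'$ with $d(c_{\bm{v}}(t_n+a), \alpha_n p) \leq D'$. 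Set $\bm{v}_n := \alpha_n^{-1}\phi_{t_n+a}\bm{v}$ and $\bm{w}_n := \alpha_n^{-1}\phi_{t_n}\bm{w}$; their basepoints stay in a fixed ball around $p$, so after passing to a subsequence $\bm{v}_n \to \bm{v}_\infty$, $\bm{w}_n \to \bm{w}_\infty$ in $T^1\widetilde{M}$, and $\alpha_n^{-1}\xi \to \xi_\infty$ in the cone topology. For every fixed $s \in \mathbb{R}$,
\begin{displaymath}
d(c_{\bm{v}_\infty}(s), c_{\bm{w}_\infty}(s)) = \lim_{n \to \infty} f(t_n + s),
\end{displaymath}
which is bounded uniformly in $s$, and both limit geodesics have forward endpoint $\xi_\infty$. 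Hence $c_{\bm{v}_\infty}$ and $c_{\bm{w}_\infty}$ are bi-asymptotic, and regularity of $\widetilde{M}$ forces $\bm{v}_\infty = \bm{w}_\infty$ (the horosphere constraint $\beta_{\xi_\infty}(c_{\bm{v}_\infty}(0), c_{\bm{w}_\infty}(0)) = 0$ rules out a nontrivial time shift). In particular $f(t_n) \to 0$ along the conical return times.

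The main obstacle is to upgrade $f(t_n) \to 0$ along this discrete sequence to $f(t) \to 0$ for every $t \to +\infty$, since Proposition~\ref{prop_2_7}(1) alone only yields $f(t) \leq 2R(\pi/2) + 3 f(t_n)$ for $t \geq t_n$, which retains the additive defect $2R(\pi/2)$. My plan here is to replay the compactness-plus-regularity argument in the diagonal quotient $(T^1\widetilde{M} \times T^1\widetilde{M})/\Gamma$: for any sequence $s_n \to +\infty$ along which one can choose $\gamma_n \in \Gamma$ so that $\gamma_n \phi_{s_n + a}\bm{v}$ stays in a fixed compact region of $T^1\widetilde{M}$ --- and such $\gamma_n$ exist along the recurrence times of $\pi\bm{v}$, which are available since conical limit of $\xi$ yields conservativity of $\pi\bm{v}$ by Proposition~\ref{prop_6_2} --- the same bi-asymptotic + regularity argument forces every subsequential limit of $(\gamma_n \phi_{s_n+a}\bm{v}, \gamma_n\phi_{s_n}\bm{w})$ onto the diagonal. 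Hence every $\omega$-limit point of the pair $(\phi_{t+a}\bm{v}, \phi_t \bm{w})$ in the diagonal quotient lies on the diagonal $\{(\bm{u}, \bm{u})\}$, and the standard dynamical fact that a forward orbit converges to its compact $\omega$-limit set (combined with the uniform boundedness of $f$) gives $f(t) \to 0$. The statement for negatively asymptotic vectors is obtained by running the same plan with the time-reversed geodesic flow.
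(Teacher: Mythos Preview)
Your core mechanism --- pull back along the conical return times by $\alpha_n^{-1}$, extract subsequential limits $\bm{v}_\infty,\bm{w}_\infty$ by compactness of a ball in $T^1\widetilde{M}$, observe that the limit geodesics are bi-asymptotic, and invoke regularity to force them onto a common geodesic --- is exactly what the paper does. The one structural difference is how $a$ is produced: you fix $a=\beta_\xi(c_{\bm v}(0),c_{\bm w}(0))$ at the outset and then need $\bm v_\infty=\bm w_\infty$ on the nose, which you justify via ``the horosphere constraint $\beta_{\xi_\infty}(c_{\bm v_\infty}(0),c_{\bm w_\infty}(0))=0$''. That step silently uses continuity of $(\eta,x,y)\mapsto\beta_\eta(x,y)$ in the boundary variable $\eta$ (you are passing $\beta_{\alpha_n^{-1}\xi}(\cdot,\cdot)=0$ to the limit), and in the bare no-conjugate-points setting this continuity is not established anywhere in the paper. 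The paper avoids this entirely: it does \emph{not} prescribe $a$ in advance but simply reads off $a$ as the time-shift between the limit geodesics $c_{\bar{\bm v}}$ and $c_{\bar{\bm w}}$, so no continuity of Busemann in $\xi$ is needed.

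The more serious issue is your upgrade from $f(t_n)\to 0$ to $f(t)\to 0$. Your $\omega$-limit argument in $(T^1\widetilde{M}\times T^1\widetilde{M})/\Gamma$ appeals to the ``standard fact that a forward orbit converges to its compact $\omega$-limit set''; but that fact requires the forward orbit to be precompact in the quotient, which here is equivalent to the projection $\pi\phi_t\bm v$ having bounded forward orbit in $T^1M$. Conservativity (equivalently $\xi\in L_c(\Gamma)$, via Proposition~\ref{prop_6_2}) only gives returns to a compact set along \emph{some} sequence of times; it does not prevent the orbit from making arbitrarily long excursions to infinity between returns, and along such excursions there is no $\gamma_n$ to run your diagonal-limit argument. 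So as written you have only established $f(t_n+s)\to 0$ for each fixed $s$, which is precisely where the paper's proof also stops (it ends with $d(c_{\bm v}(t+t_n+a),c_{\bm w}(t+t_n))\le 2\epsilon$ for $n$ large and fixed $t$, without further comment). In short, your proposal is faithful to the paper's argument and even identifies the soft spot the paper glosses over, but your proposed fix does not close it.
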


\begin{proof}
    Given $\xi$ is a conical limit point, we can find some $D>0$ and $\alpha_n\in\Gamma$, such that
    \begin{displaymath}
        d(\alpha_n p, c_{\bm{v}}\vert_{(0,+\infty)})\leq D, \quad n\in\mathbb{N}.
    \end{displaymath}

    We can choose $t_n\to+\infty$, such that $d(\alpha_n p,c_{\bm{v}}(t_n))\leq D$. Using $\alpha_n^{-1}$ to pull the point $c_{\bm{v}}(t_n)$ back to the $D$ neighborhood of the point $p$, together with $\mathrm{d}\alpha_n^{-1}(c_{\bm{v}}'(t_n))$. 
    
    We know that $T^1B(p,D)$ is compact, by passing to a subsequence if needed, assume that $\mathrm{d}\alpha_n^{-1}(c'_{\bm{v}}(t_n))$ converges to some $\bar{\bm{v}}\in T^1\widetilde{M}$.

    Now given $\bm{w}$ positive asymptote to $\bm{v}$, by Proposition~\ref{prop_2_7}~(1), we have
    \begin{displaymath}
        d(c_{\bm{v}}(t),c_{\bm{w}}(t))\leq 2R(\frac{\pi}{2})+3d(c_{\bm{v}}(0),c_{\bm{w}}(0))\triangleq B.
    \end{displaymath}

    By triangle inequality, we have
    \begin{displaymath}
        d(\alpha_n^{-1}c_{\bm{w}}(t_n),p)\leq B+D.
    \end{displaymath}

    Again, by passing to a subsequence if needed, we can assume that $\mathrm{d}\alpha_n^{-1}(c_{\bm{w}}'(t_n))$ converges to $\bar{\bm{w}}$.

    Thus, for any $s>-t_n$, we have
    \begin{displaymath}
        d(c_{\bm{v}}(s+t_n),c_{\bm{w}}(s+t_n))=d(c_{\mathrm{d}\alpha_n^{-1}c_{\bm{v}(s+t_n)}'}(s+t_n),c_{\mathrm{d}\alpha_n^{-1}c_{\bm{w}(s+t_n)}'}(s+t_n)).
    \end{displaymath}

    Let $n\to +\infty$, we can conclude $d(c_{\bar{\bm{v}}}(s),c_{\bar{\bm{w}}}(s))\leq B$ for any $s$. So $\bar{\bm{v}}$ and $\bar{\bm{w}}$ are bi-asymptotic. Because $M$ is regular, the connecting geodesic should be unique. We can find some $a$, such that $c_{\bar{\bm{v}}}(t+a)= c_{\bar{\bm{w}}}(t)$. This $a$ is what we need. As we can see, given any $\epsilon>0$, we have for $n$ sufficiently large, by the continuity of the geodesics
    \begin{displaymath}
        \begin{aligned}
            &d(c_{\bm{v}}(t+t_n+a),c_{\bm{w}}(t+t_n))=d(\alpha_n^{-1}c_{\bm{v}}(t+t_n+a),\alpha_n^{-1}c_{\bm{w}}(t+t_n))\\
                                                    &\leq d(\alpha_n^{-1}c_{\bm{v}}(t+t_n+a),c_{\bar{\bm{v}}}(t+a))+d(c_{\bar{\bm{v}}}(t+a),c_{\bar{\bm{w}}}(t))+d(c_{\bar{\bm{w}}}(t),\alpha_n^{-1}c_{\bm{w}}(t+t_n))\\
                                                    &\leq \epsilon+0+\epsilon\\
                                                    &=2\epsilon.   
        \end{aligned}
    \end{displaymath}
\end{proof}

\begin{theorem}\label{thm_6_6}
	Let $M$ be a complete, regular and uniform visibility manifold with no conjugate points and $m$ is the BMS measure. If the geodesic flow $\phi_t$ is conservative with respect to $m$, then it is ergodic with respect to $m$.
\end{theorem}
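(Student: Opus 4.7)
The plan is to run the classical Hopf argument, adapted to our visibility setting. It suffices to show that for every $f, g \in C_c(T^1M)$ with $g > 0$ the Hopf ratio
\[
R(\bm{v}) \;=\; \lim_{T \to \infty} \frac{\int_0^T f(\phi_t \bm{v})\, \mathrm{d}t}{\int_0^T g(\phi_t \bm{v})\, \mathrm{d}t}
\]
is $m$-a.e.\ constant; a standard approximation then forces every $\phi_t$-invariant measurable set to have trivial $m$-measure. Since $(T^1M, m, \phi_t)$ is $\sigma$-finite and conservative, the Hopf ratio ergodic theorem produces $R^+ = R^-$ $m$-a.e., both being the ratio of conditional expectations $E[f\mid\mathcal{I}]/E[g\mid\mathcal{I}]$ with respect to the $\phi_t$-invariant $\sigma$-algebra $\mathcal{I}$. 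Lifting $f, g$ to $\Gamma$-invariant functions $\widetilde{f}, \widetilde{g}$ on $T^1\widetilde{M}$ gives $\widetilde{R}^+ = \widetilde{R}^-$ $\widetilde{m}$-a.e. By Proposition~\ref{prop_6_2} and conservativity of both $\phi_t$ and $\phi_{-t}$, the endpoints $c_{\bm{\widetilde{v}}}(\pm\infty)$ lie in $L_c(\Gamma)$ for $\widetilde{m}$-a.e.\ $\bm{\widetilde{v}}$.

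The geometric heart of the argument is to show that $\widetilde{R}^+(\bm{\widetilde{v}})$ depends only on $\xi = c_{\bm{\widetilde{v}}}(+\infty)$. Given two positively asymptotic vectors $\bm{\widetilde{v}}, \bm{\widetilde{w}}$ with common forward endpoint $\xi \in L_c(\Gamma)$, Proposition~\ref{prop_6_5} furnishes a shift $a$ with $d(c_{\bm{\widetilde{v}}}(t+a), c_{\bm{\widetilde{w}}}(t)) \to 0$. Unpacking its proof, the regularity hypothesis (uniqueness of connecting geodesics) forces the limiting vectors $\bar{\bm{v}}, \bar{\bm{w}}$ to parametrize the same geodesic up to the shift $a$, which upgrades base-point convergence to convergence of the orbits $\phi_{t+a}\bm{\widetilde{v}}, \phi_t \bm{\widetilde{w}}$ in $T^1\widetilde{M}$. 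Uniform continuity of $\widetilde{f}, \widetilde{g}$ on compact sets, together with the invariance of the Hopf ratio under finite time-shifts, then yields $\widetilde{R}^+(\bm{\widetilde{v}}) = \widetilde{R}^+(\bm{\widetilde{w}})$. Hence $\widetilde{R}^+$ factors as $F(\xi)$ for a measurable function $F$ on $\widetilde{M}(\infty)$; the backward analogue of Proposition~\ref{prop_6_5} gives $\widetilde{R}^-(\bm{\widetilde{v}}) = G(\eta)$ with $\eta = c_{\bm{\widetilde{v}}}(-\infty)$ and some measurable $G$.

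Passing to Hopf coordinates $(\xi, \eta, t) \in \widetilde{M}^2(\infty) \times \mathbb{R}$, well-defined by regularity, the measure $\widetilde{m}$ is locally the product $\mathrm{e}^{\delta_\Gamma \beta_p(\xi,\eta)}\, \mathrm{d}\mu_p(\xi)\, \mathrm{d}\mu_p(\eta)\, \mathrm{d}t$. The identity $\widetilde{R}^+ = \widetilde{R}^-$ on a set of full $\widetilde{m}$-measure becomes $F(\xi) = G(\eta)$ for $(\mu_p \otimes \mu_p)$-a.e.\ pair $(\xi, \eta)$ in the support. A one-line Fubini argument finishes the proof: fix an $\eta_0$ for which $F(\xi) = G(\eta_0)$ holds for $\mu_p$-a.e.\ $\xi$; then $F$ is $\mu_p$-a.e.\ equal to the constant $G(\eta_0)$, and symmetrically for $G$. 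Therefore $R$ is $m$-a.e.\ constant, which is ergodicity.

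The main obstacle is the vector-level convergence in the geometric step: Proposition~\ref{prop_6_5} is phrased in terms of base-point distances, whereas the Hopf argument genuinely requires convergence of unit tangent vectors in $T^1\widetilde{M}$. Extracting this upgrade means carefully revisiting the construction of $\bar{\bm{v}}, \bar{\bm{w}}$ in the proof of Proposition~\ref{prop_6_5} and using the regularity hypothesis to deduce that their orbits coincide as curves in $T^1\widetilde{M}$, not merely as curves in $\widetilde{M}$. A secondary, routine, point is the appeal to the Hopf ratio ergodic theorem in the $\sigma$-finite conservative setting, which is precisely why we work with ratios $R$ rather than raw Birkhoff averages that could vanish when $m(T^1M) = \infty$.
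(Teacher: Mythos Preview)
Your proposal is correct and follows essentially the same Hopf argument as the paper: both rely on Proposition~\ref{prop_6_5} for the contraction along positively asymptotic orbits, pass to Hopf coordinates where $\widetilde{m}$ is a product, and conclude that the Hopf ratio is $\mu_p\times\mu_p$-a.e.\ constant via a Fubini-type step. The only cosmetic difference is that the paper fixes a single positive reference function $\rho\in L^{1}(m)$ with a Lipschitz property in the base-point distance (following~\cite{Ni}) in place of your generic $g\in C_{c}(T^{1}M)$, and your explicit remark about upgrading Proposition~\ref{prop_6_5} from base-point convergence to convergence in $T^{1}\widetilde{M}$ is a subtlety the paper leaves implicit.
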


\begin{proof}
   Choose a positive function $\rho\in L^1(m)$ such that for any $\bm{v}$ and $\bm{w}$ in $T^1 M$ with $d(c_{\bm{v}}(0),c_{\bm{w}}(0))<1$, 
   \begin{displaymath}
       \frac{\rho(\bm{v})-\rho(\bm{w})}{\rho(\bm{w})}\leq C_1 d(c_{\bm{v}}(0),c_{\bm{w}}(0)).
   \end{displaymath}
   Here $C_1>0$ is a constant. The standard construction of such function can be found in~\cite{Ni}.

It is well known that for any $\bm{v}\in M_C$ and $\rho>0$, we have 
\begin{displaymath}
    \int_0^{+\infty}\rho(c_{\bm{v}}(t))\,\mathrm{d}t=+\infty.
\end{displaymath}

Now, for any $f\in L^1(m)$, by the Hopf ergodic theorem,

\begin{displaymath}
f_{\rho}^{+}(\bm{v})\triangleq \lim_{T\to +\infty} \frac{\int_0^T f(c_{\bm{v}}(t))\,\mathrm{d}t}{\int_0^T \rho(c_{\bm{v}}(t))\,\mathrm{d}t}.
\end{displaymath}

The limit $f_{\rho}^{+}(\bm{v})$ exists and invariant under the geodesic flows, for $m$-a.e.\ vector $\bm{v}$.

Next, we want to show that for $m$-a.e.\ $\bm{w}$ positive asymptote to $\bm{v}$, we have $f_{\rho}^{+}(\bm{v})=f_{\rho}^{+}(\bm{w})$. By the above proposition, we can find $a$ such that $d(c_{\bm{v}}(t+a),c_{\bm{w}}(t))\to 0$ as $t\to +\infty$. A direct computation shows that
\begin{displaymath}
\begin{aligned}
   f_{\rho}^{+}(\bm{v})-f_{\rho}^{+}(\bm{w})
   =&\lim_{T\to +\infty}\left(\frac{\int_0^T f(c_{\bm{v}}'(t+a))\,\mathrm{d}t}{\int_0^T \rho(c_{\bm{v}}'(t+a))\,\mathrm{d}t}-\frac{\int_0^T f(c_{\bm{w}}'(t))\,\mathrm{d}t}{\int_0^T \rho(c_{\bm{w}}'(t))\,\mathrm{d}t}\right)\\
   =&\lim_{T\to\infty}\left\{\frac{\int_0^T[f(c_{\bm{v}}'(t+a))-f(c_{\bm{w}}'(t))]\,\mathrm{d}t}{\int_0^T \rho(c_{\bm{v}}'(t+a))\,\mathrm{d}t}+\right.\\
    &\left. \frac{\int_0^T f(c_{\bm{v}}'(t))\,\mathrm{d}t}{\int_0^T \rho(c_{\bm{w}}'(t))\,\mathrm{d}t}\frac{\int_0^T[\rho(c_{\bm{w}}'(t))-\rho(c_{\bm{v}}'(t+a))]\,\mathrm{d}t}{\int_0^T \rho(c_{\bm{v}}'(t+a))\,\mathrm{d}t}\right\}
\end{aligned}        
\end{displaymath}

By the choice of $\rho$, we can see this limit is zero for $f$ continuous with compact support. Thus, we can argue for $f\in L^1(m)$, $f_{\rho}^{+}(\bm{v})=f_{\rho}^{+}(\bm{w})$ as required. 

Lastly, lift $f_{\rho}^{+}$ to $T^1\widetilde{M}$, denoted by $\tilde{f}_{\rho}^{+}$. Under the standard Hopf coordinates~(cf.~\cite{Ni}), the BMS measure $\tilde{m}$ is a product measure defined on $\widetilde{M}^2(\infty)\times\mathbb{R}$. By the construction of $\tilde{f}_{\rho}^{+}$, it is $\Gamma$-invariant on the first two coordinates, and $\tilde{\phi}_t$ invariant on the third coordinate. Thus, for any $\xi\in L_C(\Gamma)$, $\tilde{f}_{\rho}^{+}$ is constant on $\{\xi\}\times\widetilde{M}(\infty)$ and $\widetilde{M}(\infty)\times\{\xi\}$. Together with $\tilde{f}_{\rho}^{+}$ is $\Gamma$ invariant, we can show that $\tilde{f}_{\rho}^{+}$ is $\mu_p\times\mu_p$-a.e.\ constant, thus is $\tilde{m}$-a.e.\ constant. The Hopf ergodic theorem implies that the geodesic flow is ergodic.
\end{proof}

For the other direction, we have the following result:

\begin{theorem}\label{thm_6_7}
    Let $M$ be a complete, regular and uniform visibility manifold with no conjugate points and $\Gamma$ be a non-elementary discrete subgroup of $\text{Iso}(\widetilde{M})$. Suppose $\mu={\{\mu_q\}}_{q\in\widetilde{M}}$ is the Patterson-Sullivan measure, and $p\in\widetilde{M}$ is a point chosen.

	Then $\Gamma$ acts on $\widetilde{M}(\infty)\times\widetilde{M}(\infty)$ is ergodic with respect to $\mu_p\times\mu_p$, if and only if $\phi_t$ is ergodic with respect to the BMS measure $m$.
\end{theorem}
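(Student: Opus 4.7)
The plan is to prove the equivalence by passing to Hopf coordinates on $T^1\widetilde{M}$ and exploiting the product structure of the BMS measure under these coordinates. Since $M$ is regular and a uniform visibility manifold with no conjugate points, for each $(\xi,\eta)\in\widetilde{M}^2(\infty)$ there is a unique connecting geodesic, and Proposition~\ref{prop_2_7} together with Proposition~\ref{prop_2_6} makes the map
\begin{displaymath}
\Phi:T^1\widetilde{M}\to\widetilde{M}^2(\infty)\times\mathbb{R},\qquad \bm{v}\mapsto\bigl(c_{\bm{v}}(-\infty),\,c_{\bm{v}}(+\infty),\,\beta_{c_{\bm{v}}(+\infty)}(p,\pi\bm{v})\bigr)
\end{displaymath}
a Borel isomorphism onto its image. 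From the construction of $\widetilde{m}$ through $\text{Length}(c_{\xi,\eta}\cap\pi(\cdot))\,d\widetilde{\mu}(\xi,\eta)$, we read off that in these coordinates $\Phi_\ast\widetilde{m}=e^{\delta_\Gamma\beta_p(\xi,\eta)}\,d\mu_p(\xi)\,d\mu_p(\eta)\,dt$, and the geodesic flow acts by translation on the third coordinate while $\Gamma$ acts trivially on the $\mathbb{R}$-factor and by $\alpha\cdot(\xi,\eta)=(\alpha\xi,\alpha\eta)$ on the first two coordinates (with the conformal density absorbing the Radon--Nikodym factor, which is exactly the content of the $\Gamma$-equivariance of $\mu$).

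For the direction ($\Leftarrow$), suppose $\phi_t$ is ergodic with respect to $m$, and let $B\subset\widetilde{M}^2(\infty)$ be a $\Gamma$-invariant Borel set. Then $\widetilde{A}:=\Phi^{-1}(B\times\mathbb{R})\subset T^1\widetilde{M}$ is both $\phi_t$-invariant (by translation invariance in the third coordinate) and $\Gamma$-invariant; hence it descends to a $\phi_t$-invariant Borel set $A\subset T^1M$ of measure $m(A)=(\mu_p\times\mu_p)(B)\cdot\mathbf{1}$ in the sense that $m(A)=0$ iff $(\mu_p\times\mu_p)(B)=0$ and $m(A)=m(T^1M)$ iff $(\mu_p\times\mu_p)(B)=(\mu_p\times\mu_p)(\widetilde{M}^2(\infty))$ (the conformal factor is strictly positive and locally bounded). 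Ergodicity of $\phi_t$ then yields the dichotomy for $B$.

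For the direction ($\Rightarrow$), suppose the $\Gamma$-action on $\widetilde{M}^2(\infty)$ is ergodic, and let $A\subset T^1M$ be a $\phi_t$-invariant Borel set. Lift to a $\phi_t$- and $\Gamma$-invariant set $\widetilde{A}\subset T^1\widetilde{M}$ and transport to Hopf coordinates to obtain $\Phi(\widetilde{A})\subset\widetilde{M}^2(\infty)\times\mathbb{R}$. The key point is that $\phi_t$-invariance translates into translation invariance in the third coordinate, so by Fubini there is (modulo $\widetilde{m}$-null sets) a Borel set $B\subset\widetilde{M}^2(\infty)$ with $\Phi(\widetilde{A})=B\times\mathbb{R}$. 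The $\Gamma$-invariance of $\widetilde{A}$ then forces $B$ to be $\Gamma$-invariant modulo a $(\mu_p\times\mu_p)$-null set, and ergodicity of the $\Gamma$-action gives the dichotomy for $B$, hence for $A$.

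The main obstacle is the technical but routine step of justifying the measurable splitting $\Phi(\widetilde{A})=B\times\mathbb{R}$ for $\phi_t$-invariant $\widetilde{A}$, together with the careful bookkeeping between null sets on $T^1M$, on $T^1\widetilde{M}$, and on $\widetilde{M}^2(\infty)$; once the Hopf parametrization is in place, both directions reduce to this measure-theoretic correspondence. Regularity of $M$ is exactly what is needed to make the Hopf coordinates well defined (unique connecting geodesic for every pair of distinct boundary points); and the conformal density property of $\mu$ ensures the equivalence between $\Gamma$-invariance of $B$ with respect to $\mu_p\times\mu_p$ and with respect to the twisted measure $d\widetilde{\mu}$.
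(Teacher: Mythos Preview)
Your approach is essentially the same as the paper's: both directions go through the correspondence between $\Gamma$-invariant sets $B\subset\widetilde{M}^2(\infty)$ and $\phi_t$-invariant sets in $T^1M$, using the product structure of the BMS measure; the paper phrases one direction contrapositively and calls the other ``straightforward by reversing the argument,'' while you spell out the Hopf-coordinate mechanics more explicitly. One small correction: $\Gamma$ does \emph{not} act trivially on the $\mathbb{R}$-factor---under $\alpha$ the third coordinate shifts by $\beta_{\alpha\eta}(p,\alpha p)$---but since this is a translation depending only on $(\alpha,\eta)$, sets of the form $B\times\mathbb{R}$ are still carried to sets of the same form, and your argument goes through unchanged.
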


\begin{proof}
	Suppose $\Gamma$ is not ergodic, we can find a set $\widetilde{E}\subset\widetilde{M}(\infty)\times\widetilde{M}(\infty)$ with positive measure, and its complement also have positive measure, with respect to $\mu_p\times\mu_p$.

	Define $\widetilde{E}'\subset T^1\widetilde{M}$ as following:
	\begin{displaymath}
        \widetilde{E}'=\{\bm{v}\mid (\xi,\eta)\in \widetilde{E}, c_{\bm{v}}(+\infty)=\xi, c_{\bm{v}}(-\infty)=\eta\}.
	\end{displaymath}

	For each pair of $(\xi,\eta)$, all the unit tangent vectors along the whole connecting geodesic are contained in $\widetilde{E'}$.

	By the construction of BMS measure, $(\mu_p\times\mu_p)(\widetilde{E})>0$ implies $\widetilde{m}(\widetilde{E'})>0$. Project it onto the $T^1M$, say $E'$, with positive measure $m(E')>0$. Clearly $E'$ is $\phi_t$ invariant. A similar argument applies to the complement of $\widetilde{E}$. It follows that $\phi_t$ is not ergodic with respect to $m$.

	The other direction is straightforward by reversing the argument.
\end{proof}

As a summary of everything above, we state our main theorem here:

\begin{theorem}\label{thm_6_8}
    Let $M$ be a complete, regular and uniform visibility manifold with no conjugate points, $\mu={\{\mu_q\}}_{q\in\widetilde{M}}$ is the Patterson-Sullivan measure, $p\in\widetilde{M}$ is arbitrarily chosen and $m$ is the BMS measure. Then the followings are equivalent:
	\begin{enumerate}
		\item[1.] $\mu_p(L_c(\Gamma))=\mu_p(\widetilde{M}(\infty))$.
		\item[2.] $\phi_t$ is conservative with respect to $m$.
		\item[3.] $\phi_t$ is ergodic with respect to $m$.
		\item[4.] The $\Gamma$-action on $\widetilde{M}(\infty)\times\widetilde{M}(\infty)$ is ergodic with respect to $\mu_p\times\mu_p$.
		\item[5.] The Poincar\'e series $\sum_{\alpha\in\Gamma}\mathrm{e}^{-\delta_\Gamma d(p,\alpha p)}$ diverges.
	\end{enumerate}
\end{theorem}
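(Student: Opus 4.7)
The main theorem packages together the various equivalences established in Sections~\ref{sec3}--\ref{sec6}. My plan is to organize the proof as a cycle of implications, with each arrow either a direct citation of an earlier theorem or a short corollary, so that the five conditions form a closed loop.

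Concretely, I would arrange the proof around the diagram
\[
(1)\;\stackrel{\text{Thm}~\ref{thm_6_3}}{\Longleftrightarrow}\;(2)\;\stackrel{\text{Thm}~\ref{thm_6_4}}{\Longleftrightarrow}\;(5), \qquad (2)\;\stackrel{\text{Thm}~\ref{thm_6_6}}{\Longrightarrow}\;(3)\;\stackrel{\text{Thm}~\ref{thm_6_7}}{\Longleftrightarrow}\;(4),
\]
and close the loop with $(4)\Rightarrow(5)$. For the closure I would proceed in three short stages. First, I would observe that ergodicity of the $\Gamma$-action on the product $\widetilde{M}(\infty)\times\widetilde{M}(\infty)$ in (4) immediately implies ergodicity of the $\Gamma$-action on the single factor $\widetilde{M}(\infty)$ with respect to $\mu_p$: given any $\Gamma$-invariant $A\subset\widetilde{M}(\infty)$ with $0<\mu_p(A)<\mu_p(\widetilde{M}(\infty))$, the product $A\times A^c$ would be a non-trivial $\Gamma$-invariant subset of $\widetilde{M}(\infty)^2$, contradicting (4). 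Second, by Proposition~\ref{prop_3_6}, ergodicity on $\widetilde{M}(\infty)$ forces the cone $C(\delta_\Gamma)$ of $\delta_\Gamma$-conformal densities to be one-dimensional, i.e.~the PS measure is unique up to a positive scalar. Third, if the Poincar\'e series were convergent at $\delta_\Gamma$, then Patterson's slow-growth modification from Section~\ref{sec3}, applied with two sufficiently different admissible weight functions $h_1,h_2$, would yield two non-proportional $\delta_\Gamma$-conformal densities, contradicting this uniqueness.

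I expect the main obstacle to be the last step of the closure: verifying that distinct Patterson weights really do produce genuinely non-proportional conformal densities in the convergence-type case on the visibility manifolds considered here, since the remaining implications are pure bookkeeping on the earlier theorems. An alternative closure via $(3)\Rightarrow(2)$ directly would be attractive, but is in fact delicate — one cannot conclude conservativity from ergodicity alone for $\sigma$-finite invariant measures (e.g.~translation on $\mathbb{R}$ with Lebesgue measure is ergodic yet completely dissipative) — so that route would require extracting extra structural information from the product form of the BMS measure beyond abstract ergodic theory.
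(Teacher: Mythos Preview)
Your diagram of implications coincides with the paper's almost verbatim: the paper also cites Theorem~\ref{thm_6_3} for $1\Leftrightarrow 2$, Theorem~\ref{thm_6_4} for $2\Leftrightarrow 5$, Theorem~\ref{thm_6_6} for $2\Rightarrow 3$, and Theorem~\ref{thm_6_7} for $3\Leftrightarrow 4$. The only divergence is how the loop is closed. The paper simply asserts ``$3\Rightarrow 2$: Straightforward,'' whereas you reject that route and propose $(4)\Rightarrow(5)$ instead.

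Your objection to $(3)\Rightarrow(2)$ is well taken at the level of abstract ergodic theory---ergodicity of a $\sigma$-finite measure-preserving flow does not in general imply conservativity. The paper offers no argument, and if one wants to justify it one really does need the product structure of the BMS measure (essentially: a completely dissipative ergodic flow is measurably a translation on $\mathbb{R}$, which would force $\tilde\mu$ to be supported on a single $\Gamma$-orbit in $\widetilde{M}^2(\infty)$, hence $\mu_p$ purely atomic on a single orbit; one then checks this is incompatible with $\Gamma$ being non-elementary). So your instinct that something is being swept under the rug is correct.

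However, your own closure has a genuine gap at step~3. You claim that if the Poincar\'e series converges at $\delta_\Gamma$, then two different Patterson weight functions $h_1,h_2$ yield non-proportional $\delta_\Gamma$-conformal densities. This is not a standard fact and there is no reason to expect it to hold: Patterson's weight is a technical device to force divergence of the modified series, and different admissible weights can perfectly well produce the same weak-$*$ limit (indeed, the whole philosophy of the construction is that the limit should be insensitive to the choice of $h$). Proving non-uniqueness of conformal densities in the convergence-type case is essentially equivalent to proving non-ergodicity of $\Gamma$ on $\widetilde{M}(\infty)$, which is what you are trying to establish---so this step is circular unless you supply an independent construction. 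In the literature the direction you want is obtained the other way around: one first shows $\mu_p(L_c(\Gamma))=0$ in the convergence case (Theorem~\ref{thm_5_5}), then argues dissipativity, then non-ergodicity via the cross-section picture. Your steps~1 and~2 are fine, but step~3 does not stand on its own.
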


\begin{proof}
	$1\Leftrightarrow 2$: Theorem~\ref{thm_6_3}.

	$2\Rightarrow 3$: Theorem~\ref{thm_6_6}.

	$3\Rightarrow 2$: Straightforward.

	$3\Leftrightarrow 4$: Theorem~\ref{thm_6_7}.

	$2\Leftrightarrow 5$: Theorem~\ref{thm_6_4}.
\end{proof}

Similarly, based on the HTS dichotomy given in Theorem~\ref{thm_6_3}, the following result follows directly from the above theorem.

\begin{theorem}\label{thm_6_9}
	Under the same setting of Theorem~\ref{thm_6_8}, the followings are equivalent.
	\begin{enumerate}
		\item[1.] $\mu_p(L_c(\Gamma))=0$.
		\item[2.] $\phi_t$ is completely dissipative with respect to $m$.
		\item[3.] $\phi_t$ is not ergodic with respect to $m$.
		\item[4.] The $\Gamma$-action on $\widetilde{M}(\infty)\times\widetilde{M}(\infty)$ is not ergodic with respect to $\mu_p\times\mu_p$.
		\item[5.] The Poincar\'e series $\sum_{\alpha\in\Gamma}\mathrm{e}^{-\delta_\Gamma d(p,\alpha p)}$ converges.
	\end{enumerate}
\end{theorem}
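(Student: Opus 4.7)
The plan is to deduce Theorem~\ref{thm_6_9} as an immediate corollary of Theorem~\ref{thm_6_8}, by recognizing that each of its five conditions is precisely the logical negation of the corresponding condition in Theorem~\ref{thm_6_8}. Since Theorem~\ref{thm_6_8} asserts that its five conditions are mutually equivalent, their negations are automatically mutually equivalent as well, yielding Theorem~\ref{thm_6_9}.

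The only content to verify is that conditions (1) and (2) of Theorem~\ref{thm_6_9} are genuinely the negations of their counterparts in Theorem~\ref{thm_6_8} (conditions (3), (4), (5) are direct negations by definition). For condition (1), the a priori negation of $\mu_p(L_c(\Gamma))=\mu_p(\widetilde{M}(\infty))$ is the inequality $\mu_p(L_c(\Gamma))\neq\mu_p(\widetilde{M}(\infty))$, which a priori includes intermediate values. However, the remark following Theorem~\ref{thm_5_6} (applied to the $\Gamma$-invariant set $A=L_c(\Gamma)$) gives the zero-or-full dichotomy $\mu_p(L_c(\Gamma))\in\{0,\mu_p(\widetilde{M}(\infty))\}$, so the inequality reduces exactly to $\mu_p(L_c(\Gamma))=0$. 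For condition (2), the a priori negation of conservativity is $m(M_D)>0$, which does not automatically mean complete dissipativity; but the HTS dichotomy established in Theorem~\ref{thm_6_3} shows that $\phi_t$ is either conservative or completely dissipative with respect to $m$, so the negation of conservativity collapses precisely to $m(M_C)=0$.

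Assembling these two dichotomies with the chain of equivalences in Theorem~\ref{thm_6_8}, the proof reduces to the tautology that $(A\Leftrightarrow B)$ implies $(\neg A\Leftrightarrow \neg B)$. Concretely, I would write: $1\Leftrightarrow 2$ by Theorem~\ref{thm_6_3} together with the remark after Theorem~\ref{thm_5_6}; $2\Leftrightarrow 3$ follows by contrapositive from Theorem~\ref{thm_6_8} ($2\Leftrightarrow 3$), which is itself Theorem~\ref{thm_6_6} combined with the trivial direction; $3\Leftrightarrow 4$ by contrapositive of Theorem~\ref{thm_6_7}; and $2\Leftrightarrow 5$ by contrapositive of Theorem~\ref{thm_6_4}.

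There is no real obstacle here, since the statement is essentially a reformulation of Theorem~\ref{thm_6_8}. The only subtlety worth emphasizing is that the ``negation'' reading of conditions (1) and (2) relies crucially on the two dichotomies (zero-or-full measure for the conical limit set, and conservative-or-dissipative for the flow) that were proven earlier; without those, the five conditions of Theorem~\ref{thm_6_9} would be genuinely stronger than the mere negations and would require separate arguments.
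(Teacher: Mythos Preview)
Your proposal is correct and matches the paper's own approach: the paper simply states that Theorem~\ref{thm_6_9} ``follows directly from the above theorem'' (i.e., Theorem~\ref{thm_6_8}) together with the HTS dichotomy of Theorem~\ref{thm_6_3}. Your write-up is in fact more careful than the paper's, since you explicitly flag that conditions (1) and (2) require the zero-or-full dichotomy (remark after Theorem~\ref{thm_5_6}) and the conservative/dissipative dichotomy (Theorem~\ref{thm_6_3}) to be genuine negations.
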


\section{Results on Manifold with No Focal Points}\label{sec7}

In this section, we will show that our results hold on manifolds with no focal points, in considerably different context where flat strips may exist. 

We call two points $p_k=c(t_k), k=0,1$ on a geodesic $c$ are \emph{focal}, if there is a non-trivial Jacobi field $\bm{J}$, such that
\begin{displaymath}
    \bm{J}(t_0)=0,\quad \bm{J}'(t_0)\neq 0,\quad \frac{\mathrm{d}}{\mathrm{d} {t}}\Big|_{t=t_1} {\Vert \bm{J}(t)\Vert}^2=0.
\end{displaymath}

From the definition we can see, the length of the Jacobi field is strictly increasing along $c$ for $t>t_0$ if there is no focal points. Easy to see, manifolds with non-positive sectional curvature has no focal points. But the converse is not true, there are examples of manifolds with no focal points but admit positive sectional curvature. Therefore, it is a non-trivial generalization of manifolds of non-positive sectional curvature. 

Another observation is that conjugates points implies focal points. This suggests that manifolds with no focal points is of a narrower scope than manifolds with no conjugate points. In fact, manifolds with no focal points provide more geometric tools. For example, Proposition~\ref{prop_2_6} holds automatically without the requirement of visibility (cf.~\cite{OS1}). 

It is natural to expect our results hold on manifolds with no focal points under weaker conditions. We managed to remove the need that the manifolds should be regular. That is to say, for any two different points $\xi\neq\eta$ on the ideal boundary, there may exist more than one (at least one) connecting geodesic. 

This is not a trivial generalization because the Flat Strip Theorem (cf.~\cite{OS1}) tells us two bi-asymptotic geodesics bound a flat strip, which is an obstacle to the hyperbolic behavior of the geodesic flow. Thus, we need to put some restriction on the flat strip: we require that the width (distance between bounding geodesics) of the flat strip has a positive lower bound $\delta>0$. We should claim here that this restriction is reasonable. Examples are provided in~\cite{LWW2}, such as surfaces with no focal points whose subset of points with negative curvature has only finitely many connected components. This kind of surfaces is known as the cases with the mildest requirement, so far, provides a positive result to the famous conjecture on the ergodicity of the geodesic flow with respect to the Liouville measure. 

Throughout this section, we assume $M$ is a complete visibility manifolds with no focal points, and all flat strips (if any) has a common positive lower bound $\delta>0$. As a geometric preparation, we state the following properties hold under the new setting in analogy to Proposition~\ref{prop_2_7}:

\begin{proposition}\label{prop_7_1}
	Assume $\widetilde{M}$ to be the universal cover of a manifold $M$ with no focal points.
	\begin{enumerate}
		\item\cite{OS1} Assume there are two positively asymptotic geodesics $c_1$ and $c_2$, then the distance function
		\begin{displaymath}
			\begin{aligned}
				f:~& \mathbb{R}^{+} & \to     &~\mathbb{R}^+,         \\
				    & t              & \mapsto &~f(t)=d(c_1(t),c_2(t))
			\end{aligned}
		\end{displaymath}
		is a non-increasing function.
		\item\cite{LWW} The following map is continuous:
		\begin{displaymath}
			\begin{aligned}
				\Psi:~& T^1\widetilde{M}\times[-\infty,\infty] & \to     &~\widetilde{M}\cup\widetilde{M}(\infty), \\
				       & \qquad(\bm{v},t)                       & \mapsto &~c_{\bm{v}}(t).
			\end{aligned}
		\end{displaymath}
		\item\cite{LWW} For any $\bm{v}\in T^1\widetilde{M}$ and $R,\epsilon$ positive, there is a constant $L=L(\bm{v},\epsilon,R)$ such that for any $t>L$, we have
		\begin{displaymath}
			B(c_{\bm{v}}(t),R)\subset C(\bm{v},\epsilon).
		\end{displaymath}
		Here $B(c_{\bm{v}}(t),R)$ is the open ball centered at $c_{\bm{v}}(t)$ with radius $R$.
	\end{enumerate}
\end{proposition}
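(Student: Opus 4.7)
The proposal is to establish the three items by leveraging the behavior of Jacobi fields on manifolds without focal points together with visibility, following the templates used in O'Sullivan's and in our earlier paper.

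For (1), the plan is to reduce the monotonicity of $f(t)=d(c_1(t),c_2(t))$ to a convexity property for Jacobi fields. In the no-focal-points setting one has that for every Jacobi field $\bm{J}$ along a geodesic, the norm $\|\bm{J}(t)\|$ is a convex function of $t$ (if $\bm{J}$ vanishes somewhere, no-focal-points upgrades this to strict monotonicity past the zero; otherwise one shows $(\|\bm{J}\|^2)''\geq 0$ by combining the Jacobi equation with the no-focal-points condition applied to auxiliary perturbations). I would then construct, for each $t$, the unit-speed geodesic segment $\gamma_t$ from $c_1(t)$ to $c_2(t)$; the variation through these segments determines Jacobi fields along the $\gamma_t$'s whose endpoint values encode $f(t)$. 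The convexity of Jacobi field norms translates into convexity of $f$ on $[0,\infty)$. Since $c_1,c_2$ are positively asymptotic, $f$ is bounded, and a bounded convex function on $[0,\infty)$ must be non-increasing.

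For (2), I would split the argument by cases. For $t\in\mathbb{R}$ (and on compact intervals) continuity of $\Psi$ is standard from continuous dependence of the geodesic flow on initial conditions. The essential case is $t=\pm\infty$. Given $(\bm{v}_n,t_n)\to(\bm{v},+\infty)$, I need to verify that $c_{\bm{v}_n}(t_n)$ converges to $c_{\bm{v}}(+\infty)$ in the cone topology, i.e.\ enters every truncated cone $TC(c'_{\bm{v}}(0),\epsilon,r)$ based at $\pi(\bm{v})$. The key tool is part (1) paired with visibility: positive asymptoticity between the nearby geodesic $c_{\bm{v}_n}$ and $c_{\bm{v}}$ keeps their distance bounded, and visibility then forces the angle subtended at the basepoint to shrink to zero as the parameter tends to infinity, which places $c_{\bm{v}_n}(t_n)$ inside the required truncated cone for all large $n$.

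For (3), I would argue by contradiction in the spirit of Proposition~\ref{prop_4_3}. Suppose the conclusion fails for some $\bm{v}$, $R$ and $\epsilon$; then there exist $t_n\to\infty$ and $x_n\in B(c_{\bm{v}}(t_n),R)$ with $\measuredangle_p(x_n,\bm{v})\geq\epsilon$. Passing to a subsequence via compactness of $T_p^1\widetilde{M}$, the unit vectors $c'_{p,x_n}(0)$ converge to some $\bm{w}\in T_p^1\widetilde{M}$ with $\measuredangle(\bm{v},\bm{w})\geq\epsilon$. On the other hand, by the visibility axiom applied to the triangle with vertices $p$, $c_{\bm{v}}(t_n)$, $x_n$, the bounded distance $d(c_{\bm{v}}(t_n),x_n)\leq R$ combined with $d(p,c_{\bm{v}}(t_n))\to\infty$ forces $\measuredangle_p(c_{\bm{v}}(t_n),x_n)\to 0$; together with part (2), the directions $c'_{p,x_n}(0)$ must actually converge to $c'_{\bm{v}}(0)$, giving $\bm{w}=c'_{\bm{v}}(0)$ and contradicting $\measuredangle(\bm{v},\bm{w})\geq\epsilon$.

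The main obstacle is (1): in the no-focal-points regime one does not have the pointwise curvature sign available in the non-positively curved setting, so the Jacobi field convexity that drives the argument must be extracted from the global no-focal-points assumption rather than from a local comparison. Items (2) and (3) are essentially topological/geometric consequences once (1) and the visibility axiom are in hand, but one must take care that the constant $L$ in (3) is allowed to depend on $\bm{v}$ since we do not assume uniform visibility at this stage.
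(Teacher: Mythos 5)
The paper gives no proof of this proposition at all: each item is quoted from \cite{OS1} and \cite{LWW}, so your attempt must stand on its own. Your sketches of (2) and (3) follow a reasonable line (and invoking visibility in (3) is legitimate here, since visibility is a standing hypothesis of Section~\ref{sec7}), though in (2) you assert that $c_{\bm{v}_n}$ is positively asymptotic to $c_{\bm{v}}$ merely because $\bm{v}_n\to\bm{v}$; nearby geodesics need not be asymptotic, and the bounded-distance input has to come from uniform convergence on compact time intervals combined with the monotone spreading of geodesics issuing from a common point, not from asymptoticity.

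The genuine gap is in (1). Your argument rests on the claim that on a manifold without focal points the norm $\Vert\bm{J}(t)\Vert$ of \emph{every} Jacobi field is convex, and hence that $f(t)=d(c_1(t),c_2(t))$ is convex. This is false. The no-focal-points condition constrains only Jacobi fields that vanish somewhere: for a nontrivial $\bm{J}$ with $\bm{J}(t_0)=0$ it yields $\frac{\mathrm{d}}{\mathrm{d}t}\Vert\bm{J}(t)\Vert^2>0$ for $t>t_0$, i.e.\ monotonicity past the zero, and says nothing about fields that never vanish. Convexity of $\Vert\bm{J}\Vert$ for all Jacobi fields (equivalently, convexity of the distance between geodesics) forces non-positive curvature: at a parameter $t^{\ast}$ where the sectional curvature of the plane spanned by $\bm{J}$ and $c'$ is positive, a Jacobi field with $\bm{J}'(t^{\ast})=0$ and $\bm{J}(t^{\ast})\perp c'(t^{\ast})$ satisfies $(\Vert\bm{J}\Vert^2)''(t^{\ast})=2\Vert\bm{J}'\Vert^2-2\langle R(\bm{J},c')c',\bm{J}\rangle=-2K\Vert\bm{J}(t^{\ast})\Vert^2<0$. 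Since Section~\ref{sec7} explicitly allows manifolds with no focal points that carry positive curvature somewhere, the convexity you hope to ``extract from the global no-focal-points assumption'' simply does not hold. O'Sullivan's proof runs the other way: one first shows that two geodesic segments sharing an endpoint have monotone distance (this is exactly what the vanishing-Jacobi-field monotonicity gives after integrating the variation field along a connecting curve), and then deduces that the distance between positively asymptotic rays is non-increasing by letting the common endpoint run to infinity along $c_1$ and using the boundedness supplied by asymptoticity. Your final step (a bounded convex function on $[0,\infty)$ is non-increasing) is correct but moot, because the convexity it requires is unavailable.
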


As we know, manifolds with no focal points must contain no conjugate points, all the conclusions in our article still hold if the regular condition is not needed. Thus, the shadow lemmas in Section~\ref{sec4} and the properties of conical limit sets in Section~\ref{sec5} are still valid. 

The first thing needs modification to adapt to the possible existence of the flat strips is the construction of the BMS measure. On the unit tangent bundle of the universal cover $\widetilde{M}$, we define   

\begin{displaymath}
	\widetilde{m}(A)=\int_{\widetilde{M}^2(\infty)}\text{Vol}(\pi(P^{-1}(\xi,\eta)\cap A))\,\mathrm{d}\widetilde{\mu}(\xi,\eta)
\end{displaymath}
for any Borel set $A\subset T^1\widetilde{M}$.

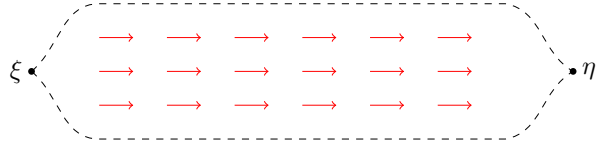
\begin{figure}[htbp]
	\centering
	\scalebox{0.9}{
		\begin{tikzpicture}
			\coordinate [label=left:$\xi$] (X) at (-4,0);
			\node at (X) [circle, fill, inner sep=1pt]{};
			\coordinate [label=right:$\eta$] (Y) at (4,0);
			\node at (Y) [circle, fill, inner sep=1pt]{};
			\draw [dashed] (X) to [in=180, out=40] (-3,1) to (3,1) [in=180, out=0] to [in=150, out=-10](Y);
			\draw [dashed] (X) to [in=180, out=-40] (-3,-1) to (3,-1) [in=180, out=0] to [in=-150, out=10](Y);
			\draw [red, ->] (-3,0.5) to (-2.5,0.5);
			\draw [red, ->] (-2,0.5) to (-1.5,0.5);
			\draw [red, ->] (-1,0.5) to (-0.5,0.5);
			\draw [red, ->] (0,0.5) to (0.5,0.5);
			\draw [red, ->] (1,0.5) to (1.5,0.5);
			\draw [red, ->] (2,0.5) to (2.5,0.5);
			\draw [red, ->] (-3,0) to (-2.5,0);
			\draw [red, ->] (-2,0) to (-1.5,0);
			\draw [red, ->] (-1,0) to (-0.5,0);
			\draw [red, ->] (0,0) to (0.5,0);
			\draw [red, ->] (1,0) to (1.5,0);
			\draw [red, ->] (2,0) to (2.5,0);
			\draw [red, ->] (-3,-0.5) to (-2.5,-0.5);
			\draw [red, ->] (-2,-0.5) to (-1.5,-0.5);
			\draw [red, ->] (-1,-0.5) to (-0.5,-0.5);
			\draw [red, ->] (0,-0.5) to (0.5,-0.5);
			\draw [red, ->] (1,-0.5) to (1.5,-0.5);
			\draw [red, ->] (2,-0.5) to (2.5,-0.5);
		\end{tikzpicture}
	}
	\caption{$P^{-1}(\xi,\eta)$ on the Flat Strip.}\label{fig8}
\end{figure}

Instead of using the length of the connecting geodesic segment, we use $\text{Vol}$ here, which stands for the volume on the manifold $\widetilde{M}$. The map $P:T^1\widetilde{M}\to\widetilde{M}^2(\infty)$ sends a vector $\bm{\widetilde{v}}$ to the pair $(c_{\bm{\widetilde{v}}}(+\infty),c_{\bm{\widetilde{v}}}(-\infty))$. Given any pair $(\xi,\eta)$, if there is a unique connecting geodesic of these two points, we can define $P^{-1}(\xi,\eta)$ be the set of all unit tangent vectors along the connecting geodesic from $\xi$ to $\eta$. If the connecting geodesics are not unique, these geodesics will form a flat strip, and we define $P^{-1}(\xi,\eta)$ be the collection of all unit tangent vectors pointing to $\eta$ on the flat strip. See Figure~\ref{fig8}. Easy to check this is well-defined and invariant under the geodesics and $\Gamma$-action.  

Here, the minimal width condition guarantees that the flat strip with positive length has positive volume. Thus, in the proof of Theorem~\ref{thm_6_4}, we can still find some constant $A$ for which the Equation~(\ref{equ_6_5}) holds.

Another results involving the BMS measure is the proof of the conservativity of the geodesic flow $\phi_t$ with respect to $m$ implies that $\phi_t$ is ergodicity under our setting. In the original proof in Section~\ref{sec6}, we actually use the Hopf argument, which may fails when there are flat strips. Thus, we give another proof here.   

We start with the definition of non-wandering set on $T^1M$. Given a vector $\bm{v}\in T^1M$, let:
\begin{displaymath}
	P^{+}(\bm{v})=\{\bm{w}\in T^1M\mid \exists \{\bm{v}_n\}\to\bm{v},~t_n\to +\infty,~\text{s.t.}~\phi_{t_n}(\bm{v}_n)\to\bm{w}\}.
\end{displaymath}

We call $\bm{v}$ is a \emph{non-wandering point}, if $\bm{v}\in P^{+}(\bm{v})$, and the collection of all non-wandering points is called the \emph{non-wandering set}:
\begin{displaymath}
	\Omega=\{\bm{v}\in T^1M\mid \bm{v}\in P^{+}(\bm{v})\}.
\end{displaymath}

The following theorem is a well-known result characterize the non-wandering set on manifolds with non-positive curvature by Eberlein:

\begin{theorem}[\cite{Eb2}]\label{thm_7_2}
    Let $M$ be a visibility manifold with non-positive curvature, $\Gamma$ is a non-elementary discrete subgroup of $\text{Iso}(\widetilde{M})$. Let $\Omega$ be the non-wandering set of the geodesic flow, then $\bm{v}\in\Omega$ if and only if $c_{\bm{\widetilde{v}}}(\pm\infty)\in L(\Gamma)$. Here $\bm{\widetilde{v}}$ is any lift of $\bm{v}$.
\end{theorem}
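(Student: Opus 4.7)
The plan is to translate dynamical recurrence of $\bm{v}\in T^1M$ into convergence of $\Gamma$-orbits on the ideal boundary, by lifting to the universal cover and using continuity of the geodesic map $\Psi$ at infinity. The forward direction is routine unwinding of the definition; the reverse direction is the substantive one and relies on Eberlein's \emph{duality property} available on visibility manifolds of non-positive curvature.

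For the forward direction, suppose $\bm{v}\in\Omega$, so there exist $\bm{v}_n\to\bm{v}$ and $t_n\to+\infty$ with $\phi_{t_n}\bm{v}_n\to\bm{v}$. Fix a lift $\bm{\widetilde{v}}$ and compatible lifts $\bm{\widetilde{v}}_n\to\bm{\widetilde{v}}$; the deck action provides $\gamma_n\in\Gamma$ with $d\gamma_n(\phi_{t_n}\bm{\widetilde{v}}_n)\to\bm{\widetilde{v}}$. Setting $p=\pi(\bm{\widetilde{v}})$ and projecting to $\widetilde{M}$ yields $d(\gamma_n^{-1}p,c_{\bm{\widetilde{v}}_n}(t_n))\to 0$. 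Continuity of $\Psi$ on $T^1\widetilde{M}\times[-\infty,\infty]$ forces $c_{\bm{\widetilde{v}}_n}(t_n)\to c_{\bm{\widetilde{v}}}(+\infty)=:\xi^+$ in the cone topology, so $\gamma_n^{-1}p\to\xi^+$ and $\xi^+\in L(\Gamma)$. Since $\Omega$ is invariant under time reversal, the same argument (applied to $-\bm{v}$ with $t_n\to-\infty$) gives $\xi^-:=c_{\bm{\widetilde{v}}}(-\infty)\in L(\Gamma)$.

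For the reverse direction, assume $\xi^\pm\in L(\Gamma)$ and invoke Eberlein's duality theorem: any two distinct limit points are $\Gamma$-dual, so there exist $\gamma_n\in\Gamma$ with $\gamma_n p\to\xi^+$ and $\gamma_n^{-1}p\to\xi^-$. Let $\sigma_n$ be a geodesic from $\gamma_n^{-1}p$ to $\gamma_n p$; visibility forces $\sigma_n$ to pass within a bounded distance of $p$ (otherwise the angle subtended at $p$ would tend to $0$, contradicting the separation of the endpoints on $\widetilde{M}(\infty)$). Let $q_n$ be the foot of the perpendicular from $p$ onto $\sigma_n$, and let $\bm{\widetilde{w}}_n$ be the unit tangent to $\sigma_n$ at $q_n$ oriented toward $\gamma_n p$; extract a subsequential limit $\bm{\widetilde{w}}_n\to\bm{\widetilde{w}}$. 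Continuity of $\Psi$ gives $c_{\bm{\widetilde{w}}}(\pm\infty)=\xi^\pm$, so $c_{\bm{\widetilde{w}}}$ is bi-asymptotic to $c_{\bm{\widetilde{v}}}$ and bounds a flat strip with it by the Flat Strip Theorem. The element $\gamma_n$ acts along $\sigma_n$ by a displacement $t_n:=d(p,\gamma_n p)\to\infty$, so up to bounded error $d\gamma_n(\bm{\widetilde{w}}_n)=\phi_{t_n+O(1)}(\bm{\widetilde{w}}_n)$; projecting to $T^1M$ gives $\pi\bm{\widetilde{w}}\in\Omega$. Since $\gamma_n$ approximately preserves the flat strip between $c_{\bm{\widetilde{w}}}$ and $c_{\bm{\widetilde{v}}}$ near $\bm{\widetilde{w}}$, the recurrence transfers transversally to $\bm{\widetilde{v}}$, giving $\bm{v}\in\Omega$.

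The main obstacle is establishing duality independently of Theorem~\ref{thm_7_2}: for any pair $\xi\neq\eta$ of limit points one must produce isometries $\gamma_n\in\Gamma$ whose repelling/attracting dynamics on $\widetilde{M}(\infty)$ approximate $\xi$ and $\eta$. This is a pingpong-type argument exploiting non-elementarity of $\Gamma$ and visibility (equivalently, the rank-one property of non-positive curvature). Once duality is in hand, the remaining bookkeeping — compatible lifts, continuity at infinity, and the flat-strip transfer — is clean.
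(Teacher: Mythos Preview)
The paper does not prove Theorem~\ref{thm_7_2}; it is quoted from Eberlein~\cite{Eb2} as background. The paper's own argument is for the analogue Theorem~\ref{thm_7_4} (uniform visibility, no focal points), and that is the proof your proposal should be measured against.

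Your forward direction is correct and standard. In the reverse direction your overall strategy---establish duality, then pass from duality to non-wandering via a geometric construction---matches the paper's, but the execution of the second step contains a genuine error. You write that ``$\gamma_n$ acts along $\sigma_n$ by a displacement $t_n$, so up to bounded error $d\gamma_n(\bm{\widetilde{w}}_n)=\phi_{t_n+O(1)}(\bm{\widetilde{w}}_n)$.'' This is false: $\gamma_n$ is not a translation along $\sigma_n$. The image $\gamma_n(\sigma_n)$ is the geodesic from $p$ to $\gamma_n^{2}p$, generically a different geodesic from $\sigma_n$; hence $d\gamma_n(\bm{\widetilde{w}}_n)$ is tangent to $\gamma_n(\sigma_n)$, not to $\sigma_n$, and is not of the form $\phi_t(\bm{\widetilde{w}}_n)$ for any $t$. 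Nothing in your setup controls $\gamma_n^{\pm 2}p$, so the recurrence you claim for $\pi\bm{\widetilde{w}}$ does not follow. The subsequent ``flat-strip transfer'' to $\bm{\widetilde{v}}$ is likewise unjustified: there is no reason the particular isometries $\gamma_n$ should approximately preserve that strip.

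The paper handles this passage differently. After proving minimality of the $\Gamma$-action on $L(\Gamma)$ (using Lemma~\ref{lem_7_3}) and deducing $D(\xi)=L(\Gamma)$ for every $\xi\in L(\Gamma)$, it invokes Lemma~2.2 of~\cite{LZ}, which asserts directly that $\bm{w}\in P^{+}(\bm{v})$ if and only if $c_{\bm{\widetilde{w}}}(-\infty)\in D(c_{\bm{\widetilde{v}}}(+\infty))$. That external lemma is precisely the bridge you attempt to build by hand with $\sigma_n$ and $\bm{\widetilde{w}}_n$, and its proof requires more care than your sketch supplies. Your closing diagnosis is therefore inverted: duality follows fairly readily from minimality and non-elementarity (this is the pingpong you allude to), while converting duality into $\bm{v}\in P^{+}(\bm{v})$ is where the real work lies.
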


This characterization plays an important role in the discussion on HTS dichotomy, but the proof given by Eberlein relied on properties of manifolds with non-positive curvature that no longer holds under our setting. To fill the gap, an extra assumption, the uniform visibility of the manifolds, is needed. We highlight the part require this extra assumption as in the following lemma.

\begin{lemma}\label{lem_7_3}
	Let $M$ be a complete, uniform visibility manifold with no focal points and $\Gamma$ is a non-elementary discrete subgroup of $\text{Iso}(\widetilde{M})$. Given $\xi\in\widetilde{M}(\infty)$ with a converging sequence of point $p_n\to\xi$ in $\widetilde{M}$ under the cone topology, we have for any open neighborhood $U$ of $\xi$ in $\overline{\widetilde{M}}=\widetilde{M}\cup\widetilde{M}(\infty)$,
	\begin{displaymath}
		\measuredangle_{p_n}(\overline{\widetilde{M}}-U)\to 0,\quad n\to\infty.
	\end{displaymath}
\end{lemma}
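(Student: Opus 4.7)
I would proceed by contradiction. Suppose the conclusion fails; then there exist $\epsilon_0 > 0$, a subsequence of $\{p_n\}$ (still denoted $p_n$), and points $x_n, y_n \in K := \overline{\widetilde{M}} \setminus U$ satisfying $\measuredangle_{p_n}(x_n, y_n) \geq \epsilon_0$. Since $K$ is closed in the compact space $\overline{\widetilde{M}}$, it is compact, and after passing to a further subsequence I may assume $x_n \to x$ and $y_n \to y$ in $K$, so that $x, y \neq \xi$.

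For each sufficiently large $n$, let $\sigma_n$ denote a connecting geodesic between $x_n$ and $y_n$ (existence by Proposition~\ref{prop_2_7}~(4) when boundary endpoints appear). The contrapositive of the uniform visibility axiom produces $z_n \in \sigma_n$ with $d(p_n, z_n) \leq R_0 := R(\epsilon_0)$. The first key step would be to show $z_n \to \xi$ in the cone topology: since $p_n \to \xi$, we have $d(p_0, p_n) \to \infty$ and hence $d(p_0, z_n) \to \infty$. Applying Proposition~\ref{prop_7_1}~(3) with $L$ taken independent of the axis vector (this uniformity is what the uniform visibility hypothesis buys us), one shows that $B(p_n, R_0)$ is eventually contained in every prescribed cone neighborhood of $\xi$ at $p_0$, so $z_n$ enters each such neighborhood.

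The contradiction must then be extracted from the conjunction $z_n \in \sigma_n$, $z_n \to \xi$, and the endpoints of $\sigma_n$ approaching $x, y \neq \xi$. I split into three cases. If $x \neq y$, uniform visibility at the fixed basepoint $p_0$ yields $d(p_0, \sigma_n) \leq R(\measuredangle_{p_0}(x, y)/2)$ for $n$ large; parametrizing $\sigma_n$ by its closest point to $p_0$, Proposition~\ref{prop_7_1}~(2) (together with compactness of the unit sphere) produces a subsequential limit $\sigma_\infty$, a geodesic with endpoints $x, y$. Since $d(p_0, z_n) \to \infty$, the $z_n$ must converge to an endpoint of $\sigma_\infty$, i.e., to $x$ or $y$ in cone topology, contradicting $z_n \to \xi \notin \{x, y\}$. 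If $x = y \in \widetilde{M}$, the segment $\sigma_n$ has vanishing length $d(x_n, y_n) \to 0$, hence Hausdorff-converges to $\{x\}$; but $d(p_n, x) \to \infty$, forcing $d(p_n, \sigma_n) \to \infty$ and contradicting $d(p_n, z_n) \leq R_0$.

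The main obstacle is the remaining case $x = y \in \widetilde{M}(\infty)$. Here one must establish the confinement claim: a geodesic $\sigma_n$ whose two boundary endpoints both converge to $x$ is eventually trapped in every cone neighborhood of $x$ at $p_0$. Granting this, picking disjoint cone neighborhoods $V_x \ni x$ and $V_\xi \ni \xi$, the point $z_n \in \sigma_n$ would lie in $V_x$ for $n$ large while simultaneously lying in $V_\xi$ (by the first key step), which is impossible. Proving the confinement appears to require a careful compactness-and-contradiction argument in the spirit of Proposition~\ref{prop_4_3}, combining the monotonicity property of the distance between asymptotic geodesics (Proposition~\ref{prop_7_1}~(1)) with the uniform visibility axiom to rule out that $\sigma_n$ can fork away from a cone around its asymptotic endpoint $x$.
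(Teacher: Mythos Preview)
Your approach is genuinely different from the paper's and considerably more laborious. The paper does not take two arbitrary points $x_n,y_n$ in the complement; instead it first reduces to the case $U=C(\bm{v},\epsilon)$ for some fixed $p\in\widetilde{M}$ and $\bm{v}=c'_{p,\xi}(0)$, so that the cone vertex $p$ itself lies in $\overline{\widetilde{M}}-U$. It then suffices (by the triangle inequality for angles) to show $\measuredangle_{p_n}(p,x_n)\to 0$ for any single sequence $x_n\notin U$. Since $\measuredangle_p(p_n,\xi)<\epsilon/2$ eventually and $\measuredangle_p(x_n,\xi)\geq\epsilon$, one gets $\measuredangle_p(p_n,x_n)\geq\epsilon/2$, so uniform visibility at $p$ produces $q_n\in c_{p_n,x_n}$ with $d(p,q_n)\leq R(\epsilon/2)$. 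Now the segment $c_{p,q_n}$ has bounded diameter while $d(p_n,p)\to\infty$, so uniform visibility at $p_n$ forces $\measuredangle_{p_n}(p,q_n)\to 0$; and because $q_n$ lies on the ray from $p_n$ towards $x_n$, this equals $\measuredangle_{p_n}(p,x_n)$. Two clean applications of visibility, no case analysis.

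Your outline is largely correct in Cases~1 and~2, but Case~3 ($x=y\in\widetilde{M}(\infty)$) is a genuine gap that you yourself flag. The ``confinement claim''---that a geodesic with both endpoints converging to a single boundary point $x$ is eventually trapped in every cone neighborhood of $x$---is not immediate: in the hyperbolic plane such geodesics do recede towards $x$, but formalizing this for general uniform visibility manifolds without focal points is exactly the kind of work the paper's trick avoids. Neither Proposition~\ref{prop_7_1}(1) nor Proposition~\ref{prop_4_3} gives this directly, and your proposal stops short of an argument. The fix is not to complete Case~3 but to anchor one of the two comparison points at a fixed interior point (the cone vertex $p$), which collapses the whole trichotomy.
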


\begin{proof}
	Under the cone topology, we can choose a point $p\in\widetilde{M}$ not in the sequence $p_n$ and let $\bm{v}=c'_{p,\xi}(0)$ be the unit tangent vector of the connecting geodesic from $p$ to $\xi$, such that $U=C(\bm{v},\epsilon)$ for some $\epsilon>0$.

	It is sufficient to show that for arbitrary sequence of point $\{x_n\}\not\subset U$, we have $\theta_n=\measuredangle_{p_n}(p,x_n)\to 0$. Let $c_{x_n,p_n}$ denote the connecting geodesic segment from $x_n$ to $p_n$. See Figure~\ref{fig9} for a sketch.

	\begin{figure}[htbp]
		\centering
		\begin{tikzpicture}
			\coordinate[label=right:$p$](P) at (0,0);
			\node[circle, fill, inner sep=1pt] at (P) {};
			\draw (P) circle (4);
			\draw [thick, fill=cyan!20] (P) to (-3.9,0.89) arc (167.2:192.8:4) to (P);
			\node at (-2,0.8) {$\scriptstyle C(\bm{v},\epsilon)$};
			\coordinate[label=left:$\xi$](X) at (-4,0);
			\node[circle, fill, inner sep=1pt] at (X) {};
			\draw [red, ->] (P) to (X);
			\draw [red, thick, ->] (P) to (-0.5,0);
			\node [red] at (-0.5,0.3) {$\bm{v}$};
			\coordinate[label=right:$x_n$](A) at (2,-2);
			\node[circle, fill, inner sep=1pt] at (A) {};
			\coordinate[label=left:$\scriptstyle p_n$] (B) at (-3.2,-0.4);
			\node[circle, fill, inner sep=1pt] at (B) {};
			\draw [blue] (A) to [in=-10, out=130](B);

			\coordinate[label=below:$q_n$] (Q) at (-0.1,-0.78);
			\node[circle, fill, inner sep=1pt] at (Q) {};
			\draw [->] (P) to (Q);
			\draw [blue] (0.08,-0.82) to (0.11,-0.68) to (-0.08,-0.64);
			\draw [blue] (P) to (B);
			\draw [red] (-2.7,-0.49) arc (-10:8:0.45);
			\node [red] at (-2.5,-0.75) {$\scriptstyle \theta_n$};
		\end{tikzpicture}
		\caption{Measure the Angle at $p_n$.}\label{fig9}
	\end{figure}
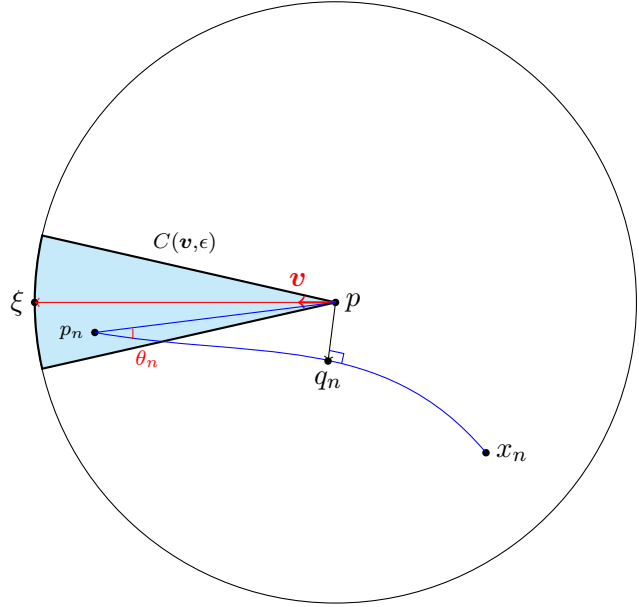

	As $p_n\to\xi$ under the cone topology, we can assume that $\measuredangle_p(p_n,\xi)<\epsilon/2$ for all $n$ because eventually $p_n\in C(\bm{v},\epsilon)$. Then for $x_n\notin C(\bm{v},\epsilon)$, we have
	\begin{displaymath}
		\measuredangle_{p}(p_n,x_n)\geq\frac \epsilon2>0.
	\end{displaymath}

	By the uniform visibility condition, we can find some $R=R(p,\epsilon/2)>0$, such that
	\begin{displaymath}
		d(p,c_{x_n,p_n})=d(p,q_n)\leq R\quad q_n\in c_{x_n,p_n}.
	\end{displaymath}

	Since $d(p,p_n)\to +\infty$, we have
	\begin{displaymath}
		d(p_n,q_n)\geq d(p,p_n)-d(p,q_n)\to\infty,
	\end{displaymath}
	and by the triangle inequality, the distance from $p_n$ to the connecting geodesic segment $c_{p,q_n}$ will goes to infinity. Again, using the uniform visibility condition, we have
	\begin{displaymath}
		\theta_n=\measuredangle_{p_n}(p,q_n)=\measuredangle_{p_n}(p,x_n)\to 0.
	\end{displaymath}
\end{proof}

Based on the Lemma~\ref{lem_7_3}, we are ready to characterize the non-wandering set under our setting.

\begin{theorem}\label{thm_7_4}
	Under the same setting of Lemma~\ref{lem_7_3}, $\bm{v}\in\Omega$ if and only if $c_{\bm{\widetilde{v}}}(\pm\infty)\in L(\Gamma)$. Here $\widetilde{v}$ is any lift of $v$ in $T^1\widetilde{M}$.
\end{theorem}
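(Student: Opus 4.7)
I plan to lift the recurrence data for $\bm v\in\Omega$ to the universal cover. Given $\bm v_n\to\bm v$ in $T^1M$ and $t_n\to+\infty$ with $\phi_{t_n}(\bm v_n)\to\bm v$, I choose lifts $\widetilde{\bm v}_n\to\widetilde{\bm v}$ in $T^1\widetilde M$ and pick $\alpha_n\in\Gamma$ so that $\alpha_n\phi_{t_n}(\widetilde{\bm v}_n)\to\widetilde{\bm v}$. Writing $p=\pi(\widetilde{\bm v})$, this forces $d(c_{\widetilde{\bm v}_n}(t_n),\,\alpha_n^{-1}p)\to 0$. The continuity of $\Psi$ from Proposition~\ref{prop_7_1}(2) yields $c_{\widetilde{\bm v}_n}(t_n)\to c_{\widetilde{\bm v}}(+\infty)=:\xi$ in the cone topology, and a routine angle estimate (a sequence at vanishing distance from a cone-convergent sequence shares the same cone-topology limit, by uniform visibility) transfers this to $\alpha_n^{-1}p\to\xi$, giving $\xi\in L(\Gamma)$. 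The endpoint $\eta=c_{\widetilde{\bm v}}(-\infty)$ is recovered by the same argument applied to the reparametrized pair $\bm w_n:=\phi_{t_n}(\bm v_n)\to\bm v$, $s_n:=-t_n\to-\infty$, which still satisfies $\phi_{s_n}(\bm w_n)=\bm v_n\to\bm v$.

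\textbf{Backward direction.} Assume $\xi=c_{\widetilde{\bm v}}(+\infty)$ and $\eta=c_{\widetilde{\bm v}}(-\infty)$ both lie in $L(\Gamma)$. My strategy rests on a \emph{duality lemma}: there exist $\gamma_n\in\Gamma$ with $\gamma_n p\to\xi$ and $\gamma_n^{-1}p\to\eta$ in the cone topology, where $p=\pi(\widetilde{\bm v})$. Granting this, let $\sigma_n$ denote the unique unit-speed geodesic from $p$ to $\gamma_n p$ on $[0,s_n]$ with $s_n=d(p,\gamma_n p)\to\infty$, and set $\widetilde{\bm v}_n:=\sigma_n'(0)$. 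Since the endpoint map $q\mapsto c_{p,q}'(0)$ extends continuously to $\overline{\widetilde M}$ and $\widetilde{\bm v}$ is the initial vector of the unique geodesic from $p$ to $\xi$, we get $\widetilde{\bm v}_n\to\widetilde{\bm v}$. Next, $\gamma_n^{-1}\phi_{s_n}(\widetilde{\bm v}_n)=\gamma_n^{-1}\sigma_n'(s_n)$ is the forward tangent at $p$ of the geodesic from $\gamma_n^{-1}p$ to $p$; since $\gamma_n^{-1}p\to\eta$ and the geodesic from $p$ to $\eta$ has initial vector $-\widetilde{\bm v}$, the same continuity yields $\gamma_n^{-1}\phi_{s_n}(\widetilde{\bm v}_n)\to\widetilde{\bm v}$. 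Projecting to $T^1M$, $\bm v_n\to\bm v$ and $\phi_{s_n}(\bm v_n)\to\bm v$, so $\bm v\in P^+(\bm v)=\Omega$.

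\textbf{Main obstacle: the duality lemma.} The crux of the argument is the duality lemma, which is where the non-elementary hypothesis and the uniform visibility axiom (through Lemma~\ref{lem_7_3}) are both essential. The plan is to start with any sequence $\alpha_n\in\Gamma$ with $\alpha_n p\to\xi$; by compactness of $\overline{\widetilde M}$ under the cone topology, a subsequence of $\alpha_n^{-1}p$ accumulates on some $\eta_0\in L(\Gamma)$. If $\eta_0=\eta$ we are done; otherwise, using that the $\Gamma$-action on $L(\Gamma)$ is minimal (every orbit is dense, a standard consequence of non-elementarity on a visibility space), find $\beta_k\in\Gamma$ with $\beta_k^{-1}\eta_0\to\eta$, and set $\gamma_{n,k}:=\alpha_n\beta_k$; a diagonal extraction then produces the required $\gamma_n$. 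Lemma~\ref{lem_7_3} is the mechanism that guarantees premultiplication by the fixed $\beta_k$ does not disturb $\alpha_n\beta_k p\to\xi$, because it forces everything at bounded $\widetilde M$-distance from a cone-convergent sequence to subtend a vanishing angle at the moving basepoint. Making this diagonal choice precise and controlling both convergences simultaneously is the step I expect to occupy most of the proof.
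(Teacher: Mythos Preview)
Your plan is correct and close in spirit to the paper's, but the two are organized differently. The paper funnels both directions through a single dynamical identity borrowed from~\cite{LZ} (their Lemma~2.2): $\bm w\in P^{+}(\bm v)$ if and only if $c_{\widetilde{\bm w}}(-\infty)\in D(c_{\widetilde{\bm v}}(+\infty))$, where
\[
D(\xi)=\{\eta\in\widetilde M(\infty)\mid \exists\,\alpha_n\in\Gamma,\ \alpha_n^{-1}p\to\xi,\ \alpha_n p\to\eta\}.
\]
Your ``duality lemma'' is precisely the statement $D(\xi)\supset L(\Gamma)$ for $\xi\in L(\Gamma)$. The paper dispatches this in one stroke: $D(\xi)$ is a nonempty closed $\Gamma$-invariant subset of $L(\Gamma)$, so by minimality $D(\xi)=L(\Gamma)$. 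Your diagonal construction with correction factors $\beta_k$ reaches the same conclusion more explicitly; both are fine, and the paper's route is shorter once one checks $\Gamma$-invariance of $D(\xi)$. For the forward direction you lift and argue by hand, whereas the paper simply quotes~\cite{LZ}; your argument is a self-contained substitute for that citation. For the backward direction you also reconstruct the geodesic-segment approximation that \cite{LZ} encapsulates.

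One correction of emphasis: minimality of the $\Gamma$-action on $L(\Gamma)$ is not as ``standard'' here as you suggest --- the paper spends most of its proof establishing exactly this, and Lemma~\ref{lem_7_3} (hence uniform visibility) is the essential input. Your stated use of Lemma~\ref{lem_7_3}, namely to ensure that $\alpha_n\beta_k p\to\xi$ for fixed $k$, is not really where the lemma bites: that stability only needs the elementary fact that two sequences at bounded mutual distance share the same cone limit. Where Lemma~\ref{lem_7_3} is genuinely required is in proving minimality itself (it forces $\alpha_n(\overline{\widetilde M}\setminus U)$ to collapse angularly into any cone neighbourhood of the attracting point). So when you write up the duality lemma, the real work will be the minimality step, not the diagonal extraction.
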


\begin{proof}
	We start the proof by showing that the $\Gamma$-action on the limit set $L(\Gamma)$ is minimal.

	Fix $\xi\in L(\Gamma)$, and choose any point $\eta\in L(\Gamma)$ with $V$ being an open neighborhood in $\overline{\widetilde{M}}$ of $\eta$. By the definition of limit points we can find a sequence of $\{\alpha_n\}\subset\Gamma$ and $p\in\widetilde{M}$, such that
	\begin{displaymath}
		\alpha_n p\to\eta,\quad n\to\infty.
	\end{displaymath}

	Because $\Gamma$ is discrete, by passing to a subsequence if needed, we can assume $\alpha_n^{-1}p$ is a converging sequence, say it converges to some point $\zeta\in\widetilde{M}(\infty)$. $\Gamma$ is non-elementary, we can find some $\beta\in\Gamma$ such that $\beta(\xi)\neq\zeta$. In particular, we can find an open neighborhood $U\subset\overline{\widetilde{M}}$ of $\zeta$ with $\beta(\xi)\notin U$.

	By Proposition~\ref{prop_2_7}~(3), for any $q\in\widetilde{M}$, we also have
	\begin{displaymath}
    \alpha_n q\to\eta,\quad \alpha_n^{-1} q\to\zeta.
	\end{displaymath}

	Without loss of generality, choose $p\notin U$. Denote $\bm{v}=c'_{p,\eta}(0)$ the unit tangent vector on the connecting geodesic of $p$ to $\eta$. Under the cone topology, we can assume the neighborhood $V$ of $\eta$ is chosen to be a cone with vertex $p$, denoted by $V=C(\bm{v},\epsilon)$ for some $\epsilon>0$. For the isometry $\alpha_n$, we have
	\begin{displaymath}
		\measuredangle_p(\alpha_n(\overline{\widetilde{M}}-U))=\measuredangle_{\alpha_n^{-1}p}(\overline{\widetilde{M}}-U).
	\end{displaymath}

	But $\alpha_n^{-1}p\to\zeta\in U$, this forces $\measuredangle_{\alpha_n^{-1}p}(\overline{\widetilde{M}}-U)\to0$ by Lemma~\ref{lem_7_3}.

	Note that $p\notin U$ suggests that $\alpha_n p\in\alpha_n(\overline{\widetilde{M}}-U)$. Therefore, we have
	\begin{displaymath}
		\max_{y\in\alpha_n(\overline{\widetilde{M}}-U)}\measuredangle_p(\alpha_n p,y)\to0,\quad n\to\infty.
	\end{displaymath}

	But $\alpha_n p\to\eta\in V$, by the cone topology, we can conclude that for sufficiently large $n$,
	\begin{displaymath}
		\alpha_n(\overline{\widetilde{M}}-U)\subset V.
	\end{displaymath}

    Similarly, we can get $\alpha_n^{-1}(\overline{\widetilde{M}}-V)\subset U$.

	So $\alpha_n\beta(\xi)\in V$ as $\beta(\xi)\notin U$. As $\eta$ and $V$ is chosen arbitrarily, the $\Gamma$-orbit of $\beta(\xi)$ is dense in $L(\Gamma)$. The minimality follows the choice of $\xi$ is arbitrary.

	Next, for $\xi\in\widetilde{M}(\infty)$, denote
	\begin{displaymath}
		D(\xi)=\{\eta\in\widetilde{M}(\infty)\mid\exists \alpha_n\in\Gamma,~p\in\widetilde{M},~{s.t.}~\alpha_n^{-1}p\to\xi, \alpha_n p\to\eta\}.
	\end{displaymath}

	By the definition of $D(\xi)$, it is a $\Gamma$-invariant closed set in $L(\Gamma)$.

	\textbf{Claim}: for any limit point $\xi\in L(\Gamma)$, we have $D(\xi)=L(\Gamma)$.

	Given any $\xi\in L(\Gamma)$, choose $\zeta\in D(\xi)\subset L(\Gamma)$, by the minimality of the $\Gamma$-action on the limit set, $\overline{\Gamma(\zeta)}=L(\Gamma)$. We also have that $D(\xi)$ is closed and invariant under $\Gamma$. Therefore,
	\begin{displaymath}
		L(\Gamma)=\overline{\Gamma(\zeta)}\subseteq D(\xi)\subseteq L(\Gamma).
	\end{displaymath}

    Lastly, the Lemma 2.2 in~\cite{LZ} shows that for any pair of vectors $\bm{v}$ and $\bm{w}$ in $T^1 M$, with $\widetilde{\bm{v}}$ and $\widetilde{\bm{w}}$ being any lifts of them in $T^1\widetilde{M}$, $\bm{w}\in P^{+}(\bm{v})$ if and only if $c_{\widetilde{\bm{w}}}(-\infty)\in D(c_{\widetilde{\bm{v}}}(+\infty))$.

	Now we go back to the proof of the statement. Assume that $\bm{v}\in\Omega$, then $\bm{v}\in P^{+}(\bm{v})$, then for any lift $\widetilde{\bm{v}}$, we have
	\begin{displaymath}
		c_{\widetilde{\bm{v}}}(-\infty)\in D(c_{\widetilde{\bm{v}}}(+\infty)).
	\end{displaymath}

	Thus, both $c_{\widetilde{\bm{v}}}(-\infty)$ and $c_{\widetilde{\bm{v}}}(+\infty)$ are in $L(\Gamma)$ as required.

	For the other direction, suppose both $c_{\widetilde{\bm{v}}}(\pm\infty)\in L(\Gamma)$, from the claim above, we have
	\begin{displaymath}
		c_{\widetilde{\bm{v}}}(-\infty)\in D(c_{\widetilde{\bm{v}}}(+\infty)).
	\end{displaymath}

	Therefore, $\bm{v}\in P^{+}(\bm{v})$ is a non-wandering point.
\end{proof}

If we remove the requirement of the uniform visibility, we can get a weaker result might be useful, and we state it here:

\begin{proposition}\label{prop_7_5}
	Let $M$ be a complete, visibility manifold with no focal points, then $\bm{v}\in\Omega$ if and only if both of the end points of the geodesic flow $c_{\bm{\widetilde{v}}}(\pm\infty)\in L(\Gamma)$, and we can find $\{\alpha_n\}\subset\Gamma$ and $p\in\widetilde{M}$, such that $\alpha_n p\to c_{\bm{\widetilde{v}}}(+\infty)$ and $\alpha_n^{-1}p\to c_{\bm{\widetilde{v}}}(-\infty)$.
\end{proposition}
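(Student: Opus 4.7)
The plan is to extract, from the previous proof of Theorem~\ref{thm_7_4}, the single ingredient that does not require uniform visibility, namely Lemma~2.2 of~\cite{LZ} (already invoked there), and then simply unfold the definition of the set $D(\xi)=\{\eta\in\widetilde{M}(\infty)\mid\exists\,\{\alpha_n\}\subset\Gamma,~p\in\widetilde{M}~\text{s.t.}~\alpha_n^{-1}p\to\xi,~\alpha_n p\to\eta\}$. The key observation is that the equivalence $\bm{w}\in P^{+}(\bm{v})\iff c_{\widetilde{\bm{w}}}(-\infty)\in D(c_{\widetilde{\bm{v}}}(+\infty))$ is proved in~\cite{LZ} under visibility alone, without the uniform or regular assumptions. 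So the backbone of the argument is intact; it is only the step $D(\xi)=L(\Gamma)$ that breaks down without uniform visibility, and the statement of Proposition~\ref{prop_7_5} is precisely what survives.

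For the forward direction, I would take $\bm{v}\in\Omega$, so that by definition $\bm{v}\in P^{+}(\bm{v})$, and then apply the lemma from~\cite{LZ} with $\bm{w}=\bm{v}$ to obtain $c_{\widetilde{\bm{v}}}(-\infty)\in D(c_{\widetilde{\bm{v}}}(+\infty))$. Unfolding the definition of $D$, this produces $\{\beta_n\}\subset\Gamma$ and $p\in\widetilde{M}$ with $\beta_n^{-1}p\to c_{\widetilde{\bm{v}}}(+\infty)$ and $\beta_n p\to c_{\widetilde{\bm{v}}}(-\infty)$; setting $\alpha_n=\beta_n^{-1}$ gives the sequence claimed by the statement. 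Since $\alpha_n p$ converges to $c_{\widetilde{\bm{v}}}(+\infty)$ and $\alpha_n^{-1}p$ converges to $c_{\widetilde{\bm{v}}}(-\infty)$, both limits belong to $L(\Gamma)$ by the very definition of the limit set, so the ``both endpoints are in $L(\Gamma)$'' part comes for free.

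For the reverse direction, I would start with $\{\alpha_n\}\subset\Gamma$ and $p\in\widetilde{M}$ satisfying $\alpha_n p\to c_{\widetilde{\bm{v}}}(+\infty)$ and $\alpha_n^{-1}p\to c_{\widetilde{\bm{v}}}(-\infty)$, set $\beta_n=\alpha_n^{-1}$ so that $\beta_n^{-1}p\to c_{\widetilde{\bm{v}}}(+\infty)$ and $\beta_n p\to c_{\widetilde{\bm{v}}}(-\infty)$, and conclude from the definition of $D$ that $c_{\widetilde{\bm{v}}}(-\infty)\in D(c_{\widetilde{\bm{v}}}(+\infty))$. A second application of Lemma~2.2 of~\cite{LZ}, again with $\bm{w}=\bm{v}$, then gives $\bm{v}\in P^{+}(\bm{v})$, i.e.\ $\bm{v}\in\Omega$.

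The only delicate point is confirming that Lemma~2.2 of~\cite{LZ} applies in our setting: it requires no focal points and visibility, but not uniform visibility nor the minimal-width condition on flat strips. I expect this to be the place that most needs verification when writing the formal proof, since the rest is essentially a rewriting of definitions. Once this is confirmed, the proposition follows by a symmetric back-and-forth through $D(\cdot)$, and the fact that we cannot strengthen the hypothesis to merely $c_{\widetilde{\bm{v}}}(\pm\infty)\in L(\Gamma)$ reflects exactly the obstruction removed by uniform visibility in Theorem~\ref{thm_7_4}.
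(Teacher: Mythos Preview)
Your proposal is correct and follows exactly the approach the paper has in mind: the paper states that the proof is ``almost identical to the proof of Theorem~\ref{thm_7_4} but not using the Lemma~\ref{lem_7_3} that requires the uniform visibility,'' which amounts precisely to retaining Lemma~2.2 of~\cite{LZ} and the definition of $D(\xi)$ while dropping the minimality claim $D(\xi)=L(\Gamma)$. Your identification of the one point needing care---that Lemma~2.2 of~\cite{LZ} requires only visibility (not uniform visibility)---is also the right place to look.
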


The proof is almost identical to the proof of Theorem~\ref{thm_7_4} but not using the Lemma~\ref{lem_7_3} that requires the uniform visibility. We will omit it here.

In~\cite{Ki}, based on Eberlein's Theorem~\ref{thm_7_2}, Kim proved that on visibility manifolds of non-positive curvature, $\phi_t$ is conservative implies $\phi_t$ is ergodic, with respect to the BMS measure. We are able to show the same result on uniform visibility manifolds with no focal points based on Theorem~\ref{thm_7_4}.

The rest of the proofs in Section~\ref{sec6} are still valid under our new settings. As a summary, we get that:

\begin{theorem}\label{thm_7_6}
    Let $M$ be a complete uniform visibility manifold with no focal points, $\mu={\{\mu_q\}}_{q\in\widetilde{M}}$ is the Patterson-Sullivan measure, $p\in\widetilde{M}$ is arbitrarily chosen and $m$ is the BMS measure. If the width of the flat strips have a positive lower bound, we have the HTS dichotomy as stated in Theorem~\ref{thm_6_8} and Theorem~\ref{thm_6_9} are still valid.
\end{theorem}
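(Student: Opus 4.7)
The plan is to revisit the chain of implications in Theorem~\ref{thm_6_8} and Theorem~\ref{thm_6_9} and to verify that each link survives the passage from ``regular uniform visibility, no conjugate points'' to ``uniform visibility, no focal points, flat strips of width $\geq\delta>0$''. The input data have already been prepared: Proposition~\ref{prop_7_1} supplies the geometric substitutes for Proposition~\ref{prop_2_7} (monotonicity of the distance between positively asymptotic geodesics, continuity of $\Psi$, and the cone containment $B(c_{\bm{v}}(t),R)\subset C(\bm{v},\epsilon)$ for $t$ large), and the modified BMS measure $\widetilde{m}$ is defined via $\mathrm{Vol}(\pi(P^{-1}(\xi,\eta)\cap A))$ rather than length along a single connecting geodesic, which is precisely the adjustment needed when $(\xi,\eta)$ is joined by a flat strip instead of a unique geodesic.

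First I would observe that Sections~\ref{sec3}, \ref{sec4} and~\ref{sec5} never invoke regularity: the construction of the Patterson--Sullivan density, the Sullivan and Mohsen shadow lemmas (Theorems~\ref{thm_4_4}, \ref{thm_4_5}), and the theorems about conical limit points (Theorems~\ref{thm_5_3}--\ref{thm_5_6}, Proposition~\ref{prop_5_7}) rely only on uniform visibility, discreteness and non-elementarity of $\Gamma$, together with the continuity of the geodesic endpoint map and the bounded-distance property for positively asymptotic geodesics; both of the latter are furnished by Proposition~\ref{prop_7_1}. Hence Theorems~\ref{thm_6_3} and~\ref{thm_6_7} transcribe verbatim, provided one checks in the proof of Theorem~\ref{thm_6_4} that the lower bound~(\ref{equ_6_5}) for $\widetilde{m}(\widetilde{T}_{16R}\cap\phi_{-d(p,\alpha p)}(\alpha\widetilde{T}_{16R}))$ survives. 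This is exactly where the positive lower bound $\delta$ on the width of any flat strip enters: any flat strip of positive length has volume at least (const)$\cdot\delta\cdot$(length), so replacing the one-dimensional length $\mathrm{Length}(c_{\xi,\eta}\cap\pi(\widetilde{T}_{16R}\cap\cdots))$ by the corresponding volume only loses a uniform constant, and $A=A(R,\delta)>0$ still exists.

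The main obstacle is the implication \emph{conservativity} $\Rightarrow$ \emph{ergodicity} (Theorem~\ref{thm_6_6}), whose proof used the Hopf argument together with Proposition~\ref{prop_6_5}; the uniqueness of the connecting geodesic assumed in Proposition~\ref{prop_6_5} is now false in the presence of flat strips, so the identification $c_{\bar{\bm{v}}}(t+a)=c_{\bar{\bm{w}}}(t)$ breaks down. My replacement is to follow Kim's approach for non-positively curved visibility manifolds, substituting the non-wandering-set characterization proved here in Theorem~\ref{thm_7_4}: $\bm{v}\in\Omega$ iff $c_{\tilde{\bm{v}}}(\pm\infty)\in L(\Gamma)$. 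Under conservativity, Theorem~\ref{thm_6_3} (which carries over) gives $\mu_p(L_c(\Gamma))=\mu_p(\widetilde{M}(\infty))$, so $\widetilde{m}$-almost every $\bm{v}$ projects to the non-wandering set, and one may run the standard absolutely-continuous-foliation--free argument on $\widetilde{M}^2(\infty)\times\mathbb{R}$ under Hopf coordinates: any $\phi_t$-invariant $L^1(m)$ function lifts to a $\Gamma$-invariant function on $\widetilde{M}^2(\infty)$ that is constant on horospherical leaves through conical limit points (using Proposition~\ref{prop_6_5} applied on the boundary of a flat strip and the positive-width assumption to control transverse neighborhoods inside each strip), and Theorem~\ref{thm_5_6} (ergodicity of $\Gamma$ on $L_c(\Gamma)$ under the PS measure) forces it to be $\mu_p\times\mu_p$-a.e.\ constant. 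This is the technically delicate step; the positive-width hypothesis is used precisely to reduce the transverse comparison to the one-geodesic case via a bounded-distortion estimate on the volume element inside each flat strip.

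Finally I would assemble the proof by checking the five-way equivalence exactly as in Theorem~\ref{thm_6_8}: $1\Leftrightarrow 2$ by the adapted Theorem~\ref{thm_6_3}; $2\Leftrightarrow 5$ by Theorem~\ref{thm_6_4} with the volume-based BMS measure and the width bound giving~(\ref{equ_6_5}); $2\Rightarrow 3$ by the Theorem~\ref{thm_7_4}-based replacement for Theorem~\ref{thm_6_6}; $3\Rightarrow 2$ trivially; and $3\Leftrightarrow 4$ by Theorem~\ref{thm_6_7}, whose argument is purely about product-measure structure of $\widetilde{m}$ on $\widetilde{M}^2(\infty)\times\mathbb{R}$ and therefore is unaffected by flat strips once $P^{-1}$ is defined on the whole strip. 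The companion Theorem~\ref{thm_6_9} follows by negation. The only hypothesis that is genuinely used beyond ``no focal points + uniform visibility'' is the uniform lower bound $\delta>0$ on flat-strip widths, and it appears exactly at the two places indicated: the volume lower bound in the analogue of~(\ref{equ_6_5}) and the transverse comparison in the Hopf-type ergodicity argument.
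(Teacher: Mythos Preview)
Your plan is essentially the paper's own: carry Sections~\ref{sec3}--\ref{sec5} over unchanged (they never use regularity), redefine the BMS measure with volume on strips so that the lower bound in~\eqref{equ_6_5} survives via the width $\delta$, and replace the Hopf-argument proof of ``conservative $\Rightarrow$ ergodic'' by Kim's method, once Theorem~\ref{thm_7_4} furnishes the non-wandering-set characterization that Kim's argument needs.

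The one place where you diverge from the paper is the detailed sketch of the ergodicity step. You write that you would ``run the standard\ldots argument on $\widetilde{M}^2(\infty)\times\mathbb{R}$'' and invoke ``Proposition~\ref{prop_6_5} applied on the boundary of a flat strip and the positive-width assumption to control transverse neighborhoods inside each strip''. The paper does nothing of the sort: it abandons the Hopf argument and Proposition~\ref{prop_6_5} entirely and simply imports Kim's proof from~\cite{Ki}, whose only geometric input is the characterization $\bm{v}\in\Omega\iff c_{\tilde{\bm{v}}}(\pm\infty)\in L(\Gamma)$, now supplied by Theorem~\ref{thm_7_4}. In particular, the paper uses the width bound $\delta$ at exactly one place---establishing the analogue of~\eqref{equ_6_5}---not two. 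Your attempt to resurrect a Hopf-type comparison across a strip is unnecessary and, as stated, not obviously sound: Proposition~\ref{prop_6_5} genuinely fails on a flat strip (two parallel geodesics stay at constant positive distance forever), so ``applying it on the boundary'' does not give the pointwise decay that the Hopf argument needs. Drop that paragraph and defer to Kim's argument as the paper does, and your proof matches the paper's.
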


\section*{Acknowledgments}

Fei Liu is partially supported by Natural Science Foundation of Shandong Province under Grant No.~ZR2020MA017.

Fang Wang is partially supported by Natural Science Foundation of China (NSFC) under Grant No.~11871045.

\bibliographystyle{amsplain}
\bibliography{manuscript}
\end{document}